\definecolor{codegreen}{rgb}{0,0.6,0}
\definecolor{codegray}{rgb}{0.5,0.5,0.5}
\definecolor{codepurple}{rgb}{0.58,0,0.82}
\definecolor{backcolour}{rgb}{0.95,0.95,0.92}
\lstdefinestyle{pythonstyle}{
    backgroundcolor=\color{backcolour},   
    commentstyle=\color{codegreen},
    keywordstyle=\color{magenta},
    numberstyle=\tiny\color{codegray},
    stringstyle=\color{codepurple},
    basicstyle=\footnotesize,
    breakatwhitespace=false,         
    breaklines=true,                 
    captionpos=b,                    
    keepspaces=true,                 
    numbers=left,                    
    numbersep=5pt,                  
    showspaces=false,                
    showstringspaces=false,
    showtabs=false,                  
    tabsize=2
}
\lstdefinestyle{outputstyle}{
    backgroundcolor=\color{backcolour},   
    numberstyle=\tiny\color{codegray},
    stringstyle=\color{codepurple},
    basicstyle=\footnotesize,
    breakatwhitespace=false,         
    breaklines=true,                 
    captionpos=b,                    
    keepspaces=true,                 
    numbers=left,                    
    numbersep=5pt,                  
    showspaces=false,                
    showstringspaces=false,
    showtabs=false,                  
    tabsize=2
}
\let\oldmarginpar\marginpar
\renewcommand\marginpar[1]{\-\oldmarginpar[\raggedleft\footnotesize #1]%
{\raggedright\footnotesize #1}}
\newtheorem{theorem}{Theorem}
\newtheorem{corollary}[theorem]{Corollary}
\newtheorem{obs}[theorem]{Observation}
\newtheorem{lemma}[theorem]{Lemma}
\newtheorem{proposition}[theorem]{Proposition}
\newtheorem{conjecture}[theorem]{Conjecture}
\newtheorem{example}[theorem]{Example}
\numberwithin{equation}{section}
\newcommand{\john}[1]{{\color{red} \sf John: [#1]}}
\newcommand{\note}[1]{{\color{violet} \sf Note: [#1]}}
\newcommand{\vanish}[1]{}
\begin{document}

\lstset{language = Python, frame = single, style = pythonstyle}

\title{New Perspectives on Neighborhood-Prime Labelings of Graphs}

\author{
John Asplund\\
{\small Department of Technology and Mathematics,
Dalton State College,} \\
{\small Dalton, GA 30720, USA} \\
{\small jasplund@daltonstate.edu}\\
\\
N. Bradley Fox\\
{\small Department of Mathematics and Statistics, Austin Peay State University} \\
{\small Clarksville, TN 37044} \\
{\small foxb@apsu.edu}\\
\\
Arran Hamm\\
{\small Department of Mathematics, Winthrop University} \\
{\small Rock Hill, SC 29733} \\
{\small hamma@winthrop.edu}
 }

\date{}
\maketitle

\begin{abstract}
Neighborhood-prime labeling is a variation of prime labeling. A labeling $f:V(G) \to [|V(G)|]$ is a neighborhood-prime labeling if for each vertex $v\in V(G)$ with degree greater than $1$, the greatest common divisor of the set of labels in the neighborhood of $v$ is $1$. In this paper, we introduce techniques for finding neighborhood-prime labelings based on the Hamiltonicity of the graph, by using conditions on possible degrees of vertices, and by examining a neighborhood graph.  In particular, classes of graphs shown to be neighborhood-prime include all generalized Petersen graphs, grid graphs of any size, and lobsters given restrictions on the degree of the vertices. In addition, we show that almost all graphs and almost all regular graphs have are neighborhood-prime, and we find all graphs of order $10$ or less that have a neighborhood-prime labeling. We give several conjectures for future work in this area. 
\end{abstract}

\section{Introduction}
Prime labeling is a type of graph labeling developed by Roger Entriger that was first formally introduced by Tout, Dabboucy, and Howalla~\cite{TDH}. We define $[n]:=\{1,\ldots,n\}$ where $n$ is a positive integer. 
Given a simple graph~$G$ of order $n$, a \textit{prime labeling} consists of labeling the vertices with integers from the set $[n]$ so that the labels of any pair of adjacent vertices are relatively prime.
Much of the work on prime labelings has occurred over the last two decades. One of the most studied conjectures, made by Entriger, is that all trees have a prime labeling. Several groups have worked on this conjecture, but this conjecture has yet to be completely resolved (see \cite{CFHK,HPT,P,S2}).
Almost all of the rest of the work on prime labelings has been focused on classes of graphs. 
Even for some grid graphs it is not known whether they have a prime labeling or not.
See the dynamic graph labeling survey~\cite{Gallian} by Gallian for an extensive list of graphs that have been shown to have prime labelings or that have been proven to not have such a labeling. 

The concept of a neighborhood-prime labeling, introduced by Patel and Shrimali~\cite{PS2}, is a variation of a prime labeling focusing on neighborhoods of vertices instead of endpoints of edges.  
We denote the neighborhood of a vertex $v$ in $G$ as $N_G(v)$, or simply $N(v)$ when the graph is clear.
A \textit{neighborhood-prime labeling} (NPL) of a graph $G$ of order $n$ is a bijective assignment of labels in $[n]$ to the vertices in $G$ such that for each $v\in V(G)$ of degree greater than $1$, the greatest common divisor ($\gcd$) of the labels of the vertices in $N(v)$ is $1$.  A graph that has a neighborhood-prime labeling is referred to as being \textit{neighborhood-prime}.  We usually use a bijective function--- $f: V(G)\rightarrow [n]$ such that $\gcd\{f(u) : u\in N(v)\}=1$ if $|N(v)|>1$ for all $v\in V(G)$---to represent this labeling.  To simplify the notation, we use $f(N(v))$ to denote $\{f(u) : u\in N(v)\}$.

Despite how much is known about graphs with prime labelings, very little is known about graphs with neighborhood-prime labelings. Only a few groups have worked on this problem. 
In their first paper on the topic, Patel and Shrimali~\cite{PS2} demonstrated the following graphs are neighborhood-prime for all sizes: paths, complete graphs, wheels, helms, and flowers.  Furthermore, cycles are neighborhood-prime if and only if their length $n$ satisfies $n\not\equiv 2\pmod{4}$.  Other graphs known to be neighborhood-prime include unions of paths and wheels, along with unions of cycles for certain lengths~\cite{PS3}.  
Recently Cloys and Fox~\cite{Fox} have shown that snake graphs are neighborhood-prime for particular lengths of the snake and of the polygons. In addition, Cloys and Fox showed numerous classes of trees to have an NPL such as caterpillars, spiders, firecrackers, and any tree that contains no degree $2$ vertices.  Based on these results, Cloys and Fox put forth a conjecture that all trees are neighborhood-prime, echoing the analogous conjecture made by Entriger for prime labelings.
Patel~\cite{Patel} has also studied generalized Petersen graphs although only a partial solution was obtained. 

In this paper, we investigate neighborhood-prime labelings from many angles. First, we show that any Hamiltonian graph of order $n\not\equiv 2\pmod{4}$ is neighborhood-prime. This is explained in detail in Section~\ref{ham_cycle} as well as other properties related to Hamiltonicity that guarantee a graph is neighborhood-prime. 
These properties are extended and applied to certain classes of graphs in Section~\ref{apps}, including generalized Petersen graphs and grid graphs. Of particular note, we are able to complete Patel's partial results to show all generalized Petersen graphs are neighborhood-prime. 
Section~\ref{small} focuses on graphs with ten or less vertices. In this section, we find all simple graphs that are neighborhood-prime and provide all such graphs that are not neighborhood-prime. 
Based on the list of graphs on ten or less vertices that are not neighborhood-prime, we conjecture that all graphs with minimum degree at least $3$ are neighborhood-prime. To further justify this conjecture, we show in Section~\ref{random} that random $d$-regular graphs $G_{n,d}$ are neighborhood-prime with high probability for $d\geq 3$ (where, as is usual, ``with high probability'' means with probability tending to one as $n$ tends to infinity). We also show that random graphs $G_{n,p}$ are neighborhood-prime w.h.p. when $p$ is large enough. 
In the final section, we explore another property that can guarantee that a graph is neighborhood-prime. This property creates a link between the rich literature of prime labelings and that of the neighborhood-prime labelings. Besides showing several classes of graphs and trees as neighborhood-prime, we show in Section~\ref{sec:trees} that as long as $n$ is large enough, all trees of order $n$ are neighborhood-prime. This mirrors the work done in \cite{HPT} to address Entriger's conjecture on trees.

\section{Hamilton Cycles}\label{ham_cycle}
Two powerful observations allow the creation of neighborhood-prime graphs from other neighborhood-prime graphs. First, observe that the graph resulting from attaching a new vertex to exactly one vertex that was of degree at least 2 in a graph with an NPL is neighborhood-prime. Second, if a graph $G$ is neighborhood-prime, then the graph formed by adding an edge in $G$ between two vertices of degree at least 2 is still neighborhood-prime. This second observation is rather powerful since the result by Patel and Shrimali \cite{PS2} that showed that a cycle of length $n$, denoted $C_n$, is neighborhood-prime for $n\not\equiv 2\pmod{4}$ can be extended to a stronger result.

\begin{theorem}\label{Hamiltonian}
Let $G$ be a graph of order $n$ such that $n\not\equiv 2\pmod{4}$.  If $G$ is Hamiltonian, then $G$ has a neighborhood-prime labeling.
\end{theorem}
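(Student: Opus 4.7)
The plan is to reduce this directly to the known result of Patel and Shrimali for cycles, invoking the second observation made in the paragraph just before the theorem (that adding an edge between two vertices of degree at least $2$ preserves an NPL). Since $G$ is Hamiltonian, I would fix a Hamilton cycle $C \subseteq G$ on the vertex set $V(G)$. Because $C$ has length $n \not\equiv 2 \pmod 4$, the Patel--Shrimali result supplies a neighborhood-prime labeling $f\colon V(C) \to [n]$ of $C$.

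Next I would argue that the very same bijection $f$, now regarded as a labeling of $V(G)$, is a neighborhood-prime labeling of $G$. Indeed, every vertex $v$ of $G$ lies on $C$ and therefore has degree at least $2$ in $G$, so the NPL condition is required at every vertex. Moreover, for each such $v$ we have $N_C(v) \subseteq N_G(v)$, and thus $\gcd f(N_G(v))$ divides $\gcd f(N_C(v)) = 1$. Equivalently, one can think of $G$ as being built up from $C$ by inserting the edges of $E(G)\setminus E(C)$ one at a time; every such insertion joins two vertices that already have degree $\geq 2$ in the current graph, so by the second observation above, the NPL property is preserved at each step.

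There is no real obstacle here beyond correctly invoking the background result: the only subtlety is the routine check that enlarging a neighborhood can only cause its gcd to divide the old one, which is immediate. All of the hard work sits inside the Patel--Shrimali theorem on cycles, which is taken as given.
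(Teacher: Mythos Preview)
Your proposal is correct and matches the paper's approach exactly: the paper also fixes a Hamilton cycle, applies the Patel--Shrimali cycle labeling, and invokes the ``second observation'' that adding edges between vertices of degree at least $2$ preserves an NPL. There is nothing to add.
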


To label the vertices of a graph with Hamilton cycle $C=(v_1,\ldots, v_n)$, we apply our second observation and use the following labeling function, which is a reformulation of the labeling used by Patel and Shrimali~\cite{PS2} for cycles:
\begin{equation}\label{cycle}
f(v_{i})=\begin{cases}
\text{  }\lfloor\frac{n}{2}\rfloor+\frac{i+1}{2} \hspace{.6cm}\text{if $i$ is odd}\\
\text{  }\frac{i}{2} \hspace{2cm}\text{if $i$ is even.}\\
\end{cases}
\end{equation}

Despite following directly from the existence of cycles that are neighborhood-prime, Theorem~\ref{Hamiltonian} is a strong result as it reveals a large class of graphs that are neighborhood-prime.  Many of the theorems in previous papers on neighborhood-prime labelings---such as Patel and Shrimali showing that helms, closed helms, and flowers are neighborhood-prime \cite{PS2} or Cloys and Fox demonstrating an NPL for the gear graph~\cite{Fox}---apply only to a single graph for any given order $n$.  
While some results---such as Patel and Shrimali's proof of unions of certain cycles being neighborhood-prime~\cite{PS3} or Cloys and Fox's examination of neighborhood-prime labelings of caterpillars---are much more general, the required structure of these graphs severely limits the number of graphs of a given order that are neighborhood-prime.  Theorem~\ref{Hamiltonian}, on the other hand, provides a large number of graphs that are neighborhood-prime as can be seen for the orders of $n\leq 12$ given below (number of Hamiltonian graphs for each $n$ provided by \textit{The On-Line Encyclopedia of Integer Sequences} published electronically at \url{https://oeis.org}, 2015, Sequence A003216).  

\begin{center}
	\begin{tabular}{|c|cccccccc|}
	\hline 
	$n$ & $3$ & $4$ & $5$ & $7$ & $8$ & $9$ & $11$ & $12$ \\
	\hline
	Num. of graphs with& \multirow{2}{*}{1} & \multirow{2}{*}{3} & \multirow{2}{*}{8} & \multirow{2}{*}{383} & \multirow{2}{*}{6196} & \multirow{2}{*}{177083} & \multirow{2}{*}{883156024} & \multirow{2}{*}{152522187830}\\
	NPLs by Thm~\ref{Hamiltonian} &  &  &  &  &  &  &  & \\
	\hline
	\end{tabular} 
\end{center}

An immediate consequence of Theorem~\ref{Hamiltonian} is that by adding vertices to a Hamiltonian graph in a particular manner ensures the graph is still neighborhood-prime. 

\begin{corollary} 
Let $H$ be neighborhood-prime with a Hamilton cycle $C=(v_0,\ldots,v_{n-1})$ of length $n\not\equiv 2\pmod{4}$. Let $G$ be formed from $H$ with $j$ additional vertices $\{u_1,\ldots,u_j\}$ such that for each $u_i$, $u_iv_{\ell_i},u_iv_{\ell_i+2}\in E(G)$ where the subscripts of $V(C)$ are calculated modulo $k$.  Then $G$ is neighborhood-prime. 
\end{corollary}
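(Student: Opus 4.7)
The plan is to label $G$ by applying formula~\eqref{cycle} directly to the Hamilton cycle $C$ of $H$ and assigning the remaining $j$ labels to the $u_i$'s arbitrarily. The key observation that makes this work is that formula~\eqref{cycle} produces coprime labels not only on adjacent pairs along $C$ but also on pairs at distance two along $C$, and the neighborhood of each new vertex $u_i$ is exactly such a distance-two pair.

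Concretely, I would label $v_0,\ldots,v_{n-1}$ via formula~\eqref{cycle}, so that they receive the labels $1, 2, \ldots, n$, and then distribute the labels $n+1, n+2, \ldots, n+j$ in any order among $u_1, \ldots, u_j$. This yields a bijection $f \colon V(G) \to [n+j]$. To verify the neighborhood-prime condition, I would treat two cases. For any cycle vertex $v_i$, the set $N_G(v_i)$ contains the two cycle neighbors $v_{i-1}$ and $v_{i+1}$, whose labels are coprime by Theorem~\ref{Hamiltonian}; hence $\gcd\{f(w) : w \in N_G(v_i)\} = 1$, no matter what other labels appear in the neighborhood (including edges of $H$ off of $C$ or edges to some of the $u_\ell$'s). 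For any new vertex $u_i$ we have $N_G(u_i) = \{v_{\ell_i}, v_{\ell_i+2}\}$, so the condition reduces to the coprimality of $f(v_{\ell_i})$ and $f(v_{\ell_i + 2})$.

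Thus the whole argument boils down to verifying a distance-two coprimality property of formula~\eqref{cycle}. Since $\ell_i$ and $\ell_i+2$ share the same parity, they are assigned labels that are consecutive integers in the same half of $[n]$, and are therefore coprime. The only subtlety occurs when $\ell_i + 2$ wraps around the cycle, which I expect to be the main technical step: a short case analysis on $n \pmod 4$ is needed, and the hypothesis $n \not\equiv 2 \pmod 4$ is used in essentially the same way as in the original Patel--Shrimali labeling of cycles (the problematic wrap-around pair otherwise produces a common factor of $2$).

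Finally, I note that the assumption that $H$ itself is neighborhood-prime is in fact redundant, since it already follows from the existence of $C$ via Theorem~\ref{Hamiltonian}, and at no point does the argument require $G$ to be Hamiltonian. This explains why the parity condition is imposed only on $n$ and not on the larger value $n+j = |V(G)|$.
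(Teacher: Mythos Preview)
Your proposal is correct and follows essentially the same route as the paper. The paper likewise labels the cycle via Equation~\eqref{cycle}, assigns the labels $n+1,\ldots,n+j$ arbitrarily to the $u_i$, and reduces the check for each $u_i$ to the coprimality of a distance-two pair on $C$; the only cosmetic difference is that the paper absorbs the $0$-indexing of $C$ by setting $g(v_i)=f(v_{i+1})$ (so that the formula, written for indices $1,\ldots,n$, applies verbatim), and it compresses your wrap-around case analysis into the single observation that $g(v_{\ell_i})$ and $g(v_{\ell_i+2})$ are either consecutive integers or one of them equals~$1$.
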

\begin{proof}
We can label $G$ using a labeling similar to Equation~\ref{cycle} by shifting the indices with $g(v_i)=f(v_{i+1})$, and assigning each $g(u_i)$ as any label in $\{n+1,\ldots, n+j\}$.  We only need to consider $\gcd\{f(N(u_i))\}$ since $N_H(v_i)\subseteq N_G(v_i)$.  Since $v_{\ell_i}$ and $v_{\ell_i+2}$ are either labeled by consecutive integers or one of them is labeled as $1$, $g$ is an NPL.
\end{proof}

Although cycles of order $2$ modulo 4 are not neighborhood-prime, the next couple of lemmas show that having even one additional edge may be enough to have a NPL.

\begin{lemma}\label{HamChord4k}
If the graph $G$ contains a Hamilton cycle $C$ and a chord that forms a cycle of length $4k$ for some positive $k\in \mathbb{Z}$ using only the chord and edges from $C$, then $G$ is neighborhood-prime.
\end{lemma}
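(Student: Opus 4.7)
The plan is to handle $n \not\equiv 2 \pmod 4$ immediately via Theorem~\ref{Hamiltonian}, and otherwise to use the chord as a targeted patch for the unique defect of the cycle labeling~(\ref{cycle}). Write the Hamilton cycle as $C = (v_1, \ldots, v_n)$ and, without loss of generality, label the chord endpoints so that the $4k$-cycle in the hypothesis is $(v_i, v_{i+1}, \ldots, v_j, v_i)$, which forces $j - i = 4k - 1$.

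I would first locate the defect of~(\ref{cycle}) on a plain $C_n$ with $n = 4m + 2$: direct computation gives $f(v_{n-1}) = n$ and $f(v_1) = n/2 + 1$, both even because $n/2 = 2m+1$ is odd. So the only vertex of $C_n$ failing the NPL condition under~(\ref{cycle}) is $v_n$, which has cycle-$\gcd$ equal to $2$; every other vertex has cycle-$\gcd = 1$. Since~(\ref{cycle}) sees only the cyclic order, I can freely reindex: let $(u_1, u_2, \ldots, u_n)$ be the traversal of $C$ starting at $v_{i+1}$, so that $u_n = v_i$ and $u_{j-i} = u_{4k-1} = v_j$. Applying~(\ref{cycle}) in this new indexing, the only potentially bad cycle-vertex is now $u_n = v_i$.

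Next I would show the chord repairs this defect. In $G$, $N(v_i)$ picks up $v_j = u_{4k-1}$. Because $4k - 1$ is odd, (\ref{cycle}) assigns $u_{4k-1}$ the label $\lfloor n/2 \rfloor + 2k = n/2 + 2k$, which is odd since $n/2$ is odd. Hence $\gcd(f(N_G(v_i))) = \gcd(2, n/2 + 2k) = 1$. The vertex $v_j$ gains $v_i$ as a new neighbor, but $v_j$'s cycle-$\gcd$ was already $1$ and enlarging a set cannot increase its $\gcd$, so $v_j$ is still fine. Every other vertex has the same neighborhood as in the cycle and thus inherits cycle-$\gcd = 1$. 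The resulting labeling is therefore an NPL of $G$.

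The main obstacle is arithmetic bookkeeping rather than anything conceptual: one must verify carefully that the ``length $4k$'' hypothesis forces $j - i \equiv 3 \pmod 4$ after the cyclic shift, which in turn forces the label at $v_j$ to have parity opposite to that of the two cycle-neighbors of $v_i$. This parity mismatch is exactly what breaks the $\gcd = 2$ obstruction that~(\ref{cycle}) leaves behind at position $n$ when $n \equiv 2 \pmod 4$.
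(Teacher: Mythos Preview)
Your argument is correct and essentially identical to the paper's: the paper simply declares ``without loss of generality'' that the chord is $v_n v_{4k-1}$ (which is exactly your reindexing step), applies the labeling~(\ref{cycle}), and observes that $f(v_{4k-1}) = n/2 + 2k$ is odd while the two cycle-neighbors of $v_n$ carry the even labels $n/2+1$ and $n$ with $\gcd$ exactly~$2$. One tiny remark: your sentence ``every other vertex has the same neighborhood as in the cycle'' is literally false if $G$ has further edges beyond $C$ and the chord, but since you already noted that enlarging a neighborhood cannot raise the $\gcd$, the conclusion is unaffected.
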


\begin{proof}
By Theorem~\ref{Hamiltonian}, we may assume the order of $G$ is $n=2\ell$ where $\ell$ is an odd integer. Let $C=(v_1,\ldots,v_n)$. Then without loss of generality we can assume the chord described in the statement of this lemma is the edge $v_n v_{4k-1}$ for some positive $k\in\mathbb{Z}$.  

Applying Patel and Shrimali's labeling in Equation~\eqref{cycle} to a cycle of length equivalent to $2$ modulo $4$ results in a labeling that satisfies the neighborhood-prime condition for the neighborhood of every vertex except for $v_n$.  The labels in this neighborhood within the cycle $C$ would be $f(N_C(v_n))=f(\{v_1,v_{n-1}\})=\{n/2+1,n\}=\{\ell+1,2\ell\}$.  Notice that $\ell+1$ and $2\ell$ are both even.  In particular, these labels have only a common factor of $2$ since $\gcd\{\ell+1,2\ell\}=\gcd\{\ell+1,\ell-1\}$.  

With the additional neighbor $v_{4k-1}$ in $N_G(v_n)$, we have $f(\{v_1,v_{n-1}, v_{4k-1})\}\subseteq f(N_G(v_n))$, with the label on the third vertex being $f(v_{4k-1})=\ell+2k$.  Since this label on $v_{4k-1}$ is odd and $\gcd\{v_1,v_{n-1}\}=2$, we obtain $\gcd\{f(N(v_n))\}=1$, proving that the labeling $f$ is an NPL.
\end{proof}

\begin{lemma}\label{oddCycle}
If $G$ is Hamiltonian and contains an odd cycle then $G$ is neighborhood-prime. 
\end{lemma}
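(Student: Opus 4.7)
The plan is to reduce to the nontrivial case and then exploit the odd cycle to contribute a useful extra neighbor to the unique ``bad'' vertex of the canonical Hamiltonian labeling. By Theorem~\ref{Hamiltonian}, one may assume $n \equiv 2 \pmod{4}$; write $n = 2\ell$ with $\ell$ odd, and fix a Hamilton cycle $C = (v_1,\ldots,v_n)$ of $G$. Under the labeling $f$ from Equation~\eqref{cycle}, every vertex other than $v_n$ has its two cycle-neighbors receiving consecutive integer labels, so its neighborhood gcd is automatically $1$. The neighborhood of $v_n$ in $C$, by contrast, receives the labels $\ell+1$ and $2\ell$, both even, with gcd equal to $2$, as already noted in the proof of Lemma~\ref{HamChord4k}. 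The task is therefore to arrange, via a judicious choice of starting vertex and orientation of $C$, that the additional edge structure forced by the odd cycle contributes an odd-labeled neighbor to $v_n$.

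The key auxiliary claim to prove first is the following: if $G$ has a Hamilton cycle $C$ of even length and contains an odd cycle, then some edge $xy \in E(G) \setminus E(C)$ joins two vertices that lie at even distance along $C$. This follows from a short bipartition argument. Two-color the vertices of $C$ alternately along $C$; edges of $C$ respect this bipartition, so if every chord did as well, then $G$ would be bipartite, contradicting the existence of an odd cycle. Hence some chord $xy$ joins two same-color vertices, which is precisely the even-distance condition.

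Given such a chord $xy$, decompose $C$ into its two arcs between $x$ and $y$, of lengths $2a$ and $2b = 2(\ell-a)$. Since $a+b=\ell$ is odd, exactly one of $a$ and $b$ is odd. By choosing the starting vertex and orientation of the traversal, re-index $C$ as $(v_1,\ldots,v_n)$ so that $x = v_n$, $y = v_{2a}$, and $a$ is odd. Then $f(v_{2a}) = a$ is odd, so the enlarged neighborhood $N_G(v_n)$ receives the labels $\ell+1$, $2\ell$, and $a$; since $\gcd(\ell+1,2\ell) = 2$ and $a$ is odd, the overall gcd drops to $1$, and $f$ is a neighborhood-prime labeling of $G$.

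The main obstacle I anticipate is the chord-existence step: a priori, the odd cycle might weave through many chords in such a way that no single chord has endpoints of the same color class in the bipartition of $C$. The bipartition argument above sidesteps this concern cleanly, reducing a global statement about cycles to a local statement about a single edge. The only remaining technicality is in the re-indexing: one must confirm that freely choosing both the starting vertex and the direction of traversal of $C$ suffices to realize either arc length as the one landing at $v_n$'s chord neighbor, which holds because $a$ and $b = \ell - a$ have opposite parities.
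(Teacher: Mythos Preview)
Your proof is correct and follows essentially the same approach as the paper's: both locate a chord of the Hamilton cycle whose endpoints lie at even $C$-distance (equivalently, a chord that closes an odd cycle with an arc of $C$) and use it to repair the unique bad vertex in the alternating Hamiltonian labeling. Your bipartition argument for the existence of such a chord and your direct reuse of the labeling from Equation~\eqref{cycle} are a bit tidier than the paper's shifted variant, but the core idea is identical.
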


\begin{proof}
By Theorem~\ref{Hamiltonian}, we may assume that $n\equiv 2\pmod{4}$.
Since $G$ contains an odd cycle and there are an even number of vertices in $G$, there must be a chord in $G$ that forms an odd cycle of length $k$ using $k-1$ consecutive edges from the Hamilton cycle. 
be the Hamilton cycle so that $v_1v_{k}$ is the chord that forms the odd cycle $(v_1,\ldots,v_k)$.
Since $n\equiv 2\pmod{4}$, label the sequence of vertices as follows:
\begin{align*}
1,2,3,\ldots,\frac{n}{2} &\to v_1,v_3,v_5,\ldots,v_{n-1}\\
 \frac{n}{2}+1,\frac{n}{2}+2,\frac{n}{2}+3,\ldots,n &\to v_{k+1},v_{k+3},v_{k+5},\ldots, v_{n}, v_2, v_4,\ldots,v_{k-1}
\end{align*}
By design, every vertex has neighbors within the cycle that are labeled by consecutive integers or by the label $1$, with the exception of $v_k$.  The neighbors of $v_k$ on the cycle have even labels $\frac{n}{2}+1$ and $n$, but since the label on $v_1$ is $1$, the chord $v_1v_k$ ensures that $\gcd\{f(N(v_k))\}=1$, and thus $G$ is neighborhood-prime. 
\end{proof}

The previous two results show that the only possible Hamiltonian graphs that may not be neighborhood-prime are those with subgraphs of cycles with lengths that are only 2 modulo 4. We can go further by showing that the labeling used for the graphs in Theorem~\ref{Hamiltonian} can in some cases be used on graphs that are not Hamiltonian.  The \textit{circumference} of a graph is defined as the length of the longest cycle in that graph.  Hamiltonian graphs of order $n$ therefore have a circumference of $n$, but graphs with circumference of $n-1$ are also neighborhood-prime assuming certain restrictions on $n$. 

\begin{proposition}\label{circumference}
All connected graphs of order $n$ with $n\not\equiv 3\pmod{4}$ that have circumference $n-1$ are neighborhood-prime.
\end{proposition}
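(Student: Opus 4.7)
The plan is to attach the unique vertex outside a longest cycle as a ``decoration'' to a neighborhood-prime labeling of that cycle. Let $C=(v_1,\ldots,v_{n-1})$ realize the circumference, and let $u$ be the unique vertex of $G$ not on $C$; because $G$ is connected, $u$ has at least one neighbor on $C$, which I will call $w$. The hypothesis $n\not\equiv 3\pmod{4}$ translates directly to $n-1\not\equiv 2\pmod{4}$, so the cycle $C$ admits an NPL via the natural adaptation of Equation~\eqref{cycle} (replacing $n$ with $n-1$); under that formula the vertex in position $2$ receives the label $1$.

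I would then rotate the cyclic labeling of $C$ so that $w$ sits in position $2$, and assign $f(u)=n$. The rotation preserves the NPL of $C$ because the labeling formula is rotationally defined on the cycle. Verification of the neighborhood-prime condition on $G$ is then routine: for each cycle vertex $v_i$ of degree at least $2$ in $G$, we have $N_C(v_i)\subseteq N_G(v_i)$, so $\gcd\{f(N_G(v_i))\}$ divides the cycle value $\gcd\{f(N_C(v_i))\}=1$; and for $u$ itself, either $\deg_G(u)=1$ (in which case the NPL condition is vacuous) or $w\in N_G(u)$ with $f(w)=1$, which forces $\gcd\{f(N_G(u))\}=1$.

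There is no serious obstacle in this argument---the only subtlety is confirming that Patel and Shrimali's cycle labeling can be shifted along $C$ without losing its NPL property, which is immediate from the cyclic nature of the formula, together with the observation that the only vertex potentially appearing in $N_G(v_i)$ beyond $N_C(v_i)$ is $u$, whose label $n$ cannot spoil a gcd that is already $1$. The role of the hypothesis $n\not\equiv 3\pmod{4}$ is precisely to ensure that the length-$(n-1)$ cycle is itself neighborhood-prime, i.e., to stay outside the single excluded residue class for cycles.
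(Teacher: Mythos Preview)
Your argument is correct and is essentially the same as the paper's own proof: both label the $(n-1)$-cycle using Equation~\eqref{cycle}, arrange (by rotation, or equivalently ``without loss of generality'') for a neighbor of $u$ to receive the label $1$, and assign $f(u)=n$. The only minor inaccuracy is your parenthetical remark that $u$ is the \emph{only} vertex that might appear in $N_G(v_i)\setminus N_C(v_i)$---the graph may also have chords---but this is harmless since your actual verification relies only on the inclusion $N_C(v_i)\subseteq N_G(v_i)$.
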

\begin{proof}
Consider an $(n-1)$-cycle $C=(v_1,\ldots,v_{n-1})$ on the graph in which we denote by $u$ the vertex in $G$ not included on $C$.  We label $C$ using the function found in Equation~\eqref{cycle} (with the length $n-1$ in place of $n$)
such that without loss of generality $v_2$ is adjacent to $u$, and then assign to $u$ the label $n$.  If $d(u)=1$, there's nothing to check; otherwise, the vertices on the cycle satisfy the neighborhood-prime condition because $n-1\not\equiv 2\pmod{4}$.  Since $f(v_2)=1$ with $v_2$ being selected to be in the neighborhood of $u$, $\gcd\{f(N(u))\}=1$, making the labeling neighborhood-prime.
\end{proof}

\section{Applications of Section~\ref{ham_cycle}}\label{apps}

\subsection{Generalized Petersen Graphs}

An interesting set of graphs that have been investigated for neighborhood-prime labelings that are often Hamiltonian are generalized Petersen graphs.  This graph, denoted by $GP(n,k)$ for $n\geq 2$ and $1\leq k<n/2$, consists of a vertex set $\{u_0,u_1,\ldots, u_{n-1},v_0,v_1,\ldots v_{n-1}\}$ and three types of edges: $u_i u_{i+1}$, $u_i v_i$, and $v_i v_{i+k}$ for each $i\in\mathbb{Z}_n=\{0,1,\ldots,n-1\}$ with the subscripts reduced modulo $n$.  
An example of $GP(12,3)$ is shown in Figure~\ref{GPgraph}, in which the NPL is found using the Hamiltonian cycle $(v_1, v_{10}, u_{10}, u_{11}, v_{11}, v_2, u_2, u_3, u_4, v_4, v_7, u_7, u_6, u_5, v_5, v_8, u_8, u_9, v_9, v_6, v_3, v_0, u_0, u_1)$.
Patel~\cite{Patel} demonstrated that $GP(n,k)$ is neighborhood-prime whenever $n$ and $k$ have a greatest common divisor of $1$, $2$, or $4$, and the special case of $GP(n,8)$.  Using our results from Section~\ref{ham_cycle}, we will prove the generalized Petersen graphs are all neighborhood-prime regardless of the common divisor of $n$ and $k$.

\begin{theorem} \label{gen_pet}
The generalized Petersen graph $GP(n,k)$ is neighborhood-prime for all $n$ and $k$.
\end{theorem}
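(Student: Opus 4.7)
The plan is to combine the classical Hamiltonicity classification for generalized Petersen graphs with the tools developed in Section~\ref{ham_cycle}, with Patel's earlier partial result picking up the remaining cases. The key parity observation is that $GP(n,k)$ has $2n$ vertices, so its order is $\equiv 0\pmod 4$ when $n$ is even and $\equiv 2\pmod 4$ when $n$ is odd.

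First I would invoke the well-known theorem of Alspach (1983) classifying non-Hamiltonian generalized Petersen graphs: $GP(n,k)$ is Hamiltonian except when $n\equiv 5\pmod 6$ and $k\in\{2,\,n-2,\,(n-1)/2,\,(n+1)/2\}$, these four exceptional $k$-values being pairwise related by the standard isomorphism $GP(n,k)\cong GP(n,k')$ whenever $kk'\equiv\pm 1\pmod n$. With this classification in hand, I would split the proof into two cases based on whether $GP(n,k)$ is Hamiltonian.

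For the Hamiltonian cases, I would argue on the parity of $n$. If $n$ is even, then the order $2n\equiv 0\pmod 4$, so Theorem~\ref{Hamiltonian} immediately produces a neighborhood-prime labeling. If $n$ is odd, then $2n\equiv 2\pmod 4$ and Theorem~\ref{Hamiltonian} does not directly apply; however, $GP(n,k)$ always contains the outer cycle $(u_0,u_1,\ldots,u_{n-1})$, which has odd length $n$. Since $GP(n,k)$ is Hamiltonian and contains an odd cycle, Lemma~\ref{oddCycle} yields the desired NPL. The example $GP(12,3)$ shown in Figure~\ref{GPgraph} illustrates exactly this Hamiltonian strategy.

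For the non-Hamiltonian cases, all of which satisfy $n\equiv 5\pmod 6$, the parameter $n$ is odd and not divisible by~$3$. A short number-theoretic check shows $\gcd(n,k)=1$ for each exceptional $k$: $\gcd(n,2)=\gcd(n,n-2)=1$ since $n$ is odd, while $\gcd(n,(n\pm1)/2)$ divides $n-2\cdot(n\pm1)/2=\mp 1$. Hence each of these exceptional graphs falls into the range $\gcd(n,k)\in\{1,2,4\}$ already handled by Patel~\cite{Patel}, completing the proof.

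The main obstacle is the bookkeeping of the exceptional cases: one must be confident that Alspach's classification covers precisely the right family and that every exceptional pair $(n,k)$ is then absorbed by Patel's prior result. Once those two facts are combined, everything else reduces to direct applications of Theorem~\ref{Hamiltonian} or Lemma~\ref{oddCycle}; no new labeling needs to be constructed here beyond exhibiting the appropriate Hamilton cycle (or, in the exceptional cases, deferring to the cited labelings of Patel).
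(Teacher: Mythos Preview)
Your overall strategy matches the paper's: split on Hamiltonicity via Alspach's classification, invoke Theorem~\ref{Hamiltonian} for even $n$, Lemma~\ref{oddCycle} for odd $n$ (using the odd outer cycle $(u_0,\ldots,u_{n-1})$), and hand the $n\equiv 5\pmod 6$ exceptions to Patel. That portion is essentially identical to what the paper does.

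The gap is precisely in the ``bookkeeping of the exceptional cases'' you flag at the end. The paper records a second non-Hamiltonian family in Alspach's classification that your statement omits: $GP(n,n/2)$ for $n\equiv 0\pmod 4$, $n\ge 8$. (Whether $k=n/2$ is admitted depends on convention; the paper's stated range is $1\le k<n/2$, yet it explicitly treats $GP(n,n/2)$ as within scope and devotes a separate lemma to it.) For this family $n$ is even but the graph is not Hamiltonian, so neither Theorem~\ref{Hamiltonian} nor Lemma~\ref{oddCycle} applies; and $\gcd(n,n/2)=n/2$, which for $n\ge 12$ lies outside Patel's hypothesis $\gcd(n,k)\in\{1,2,4\}$ (and outside his special case $k=8$ once $n\neq 16$). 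The paper closes this hole with an explicit neighborhood-prime labeling of $GP(n,n/2)$, illustrated for $GP(8,4)$ in Figure~\ref{GPlastcase}; your argument, as written, leaves these graphs uncovered.
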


\begin{figure}[htb]
\begin{center}
\includegraphics[scale=1]{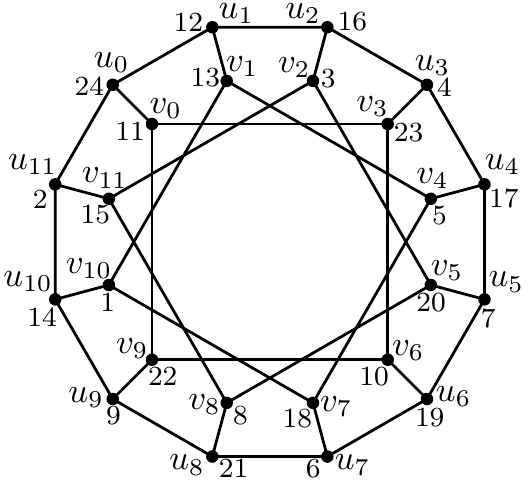}
\caption{The graph $GP(12,3)$ with a neighborhood-prime labeling}\label{GPgraph}
\end{center}
\end{figure}

It was shown by Alspach~\cite{Alspach} that $GP(n,k)$ is Hamiltonian for all $n$ and $k$ except for the following two cases:
\begin{itemize}
\item $n\equiv 5\pmod{6}$ with $k=2$ or $(n-1)/2$, which are known to be isomorphic graphs, or

\item $n\equiv 0\pmod{4}$ and $n\geq 8$ with $k=n/2$.
\end{itemize}

Since $|V(GP(n,k))|=2n$, we observe that $|V(GP(n,k))|\equiv 0\pmod{4}$ if $n$ is even and $2\pmod{4}$ when $n$ is odd.  Therefore, Theorem~\ref{Hamiltonian} would only apply when $n$ is even, thus we obtain the following corollary, which greatly expands upon Patel's conditions despite only applying to even $n$.

\begin{corollary}
The generalized Petersen graph $GP(n,k)$ is neighborhood-prime for all even $n$ and any $k$ except possibly when $n\equiv 0\pmod{4}$ and $n\geq 8$ with $k=n/2$.
\end{corollary}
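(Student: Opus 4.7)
The proof plan is a direct invocation of the two ingredients already on the table: Alspach's Hamiltonicity theorem for generalized Petersen graphs and Theorem~\ref{Hamiltonian}. The main point is simply to check that the parity hypothesis of Theorem~\ref{Hamiltonian} is always satisfied when $n$ is even, and that the only non-Hamiltonian exception listed by Alspach that has even $n$ is precisely the excluded case.

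First I would compute the order: $|V(GP(n,k))| = 2n$. When $n$ is even, $2n \equiv 0 \pmod 4$, so in particular $2n \not\equiv 2 \pmod 4$, and the parity hypothesis of Theorem~\ref{Hamiltonian} is automatically met. Thus it suffices to verify Hamiltonicity of $GP(n,k)$ for the even values of $n$ in the statement.

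Next I would appeal to Alspach's theorem, which says $GP(n,k)$ is Hamiltonian except in two families: $n \equiv 5 \pmod 6$ with $k \in \{2,(n-1)/2\}$, and $n \equiv 0 \pmod 4$ with $n \ge 8$ and $k = n/2$. The first family has $n$ odd, so it is irrelevant once we restrict to even $n$. The second family is exactly the case excluded from the corollary's statement. Therefore, for every even $n$ and every $k$ not falling into the excluded case, $GP(n,k)$ is Hamiltonian, and Theorem~\ref{Hamiltonian} applied to this Hamilton cycle produces a neighborhood-prime labeling.

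There is essentially no obstacle here: the work has been done in Theorem~\ref{Hamiltonian} and in Alspach's classification, and the corollary is the straightforward bookkeeping that matches the parity condition $2n \not\equiv 2 \pmod 4$ to the even values of $n$, while noting that the only even Hamiltonicity exception is the one already carved out of the statement.
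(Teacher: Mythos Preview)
Your proposal is correct and is precisely the argument the paper gives: compute $|V(GP(n,k))|=2n\equiv 0\pmod 4$ for even $n$, invoke Alspach's classification to see that the only even-$n$ non-Hamiltonian case is the one excluded, and apply Theorem~\ref{Hamiltonian}. There is nothing to add.
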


The single remaining case of $GP(n,k)$ with $n$ being even is when  $n\equiv 0\pmod{4}$, $n\geq 8$, and $k=n/2$ in which case the graph is not Hamiltonian, hence Theorem~\ref{Hamiltonian} cannot apply.  Instead, the next theorem handles this special case with an explicit labeling, which can be seen in Figure~\ref{GPlastcase} for the specific case of $n=8$ and $k=4$.

\begin{figure}[htb]
\begin{center}
\includegraphics[scale=1.1]{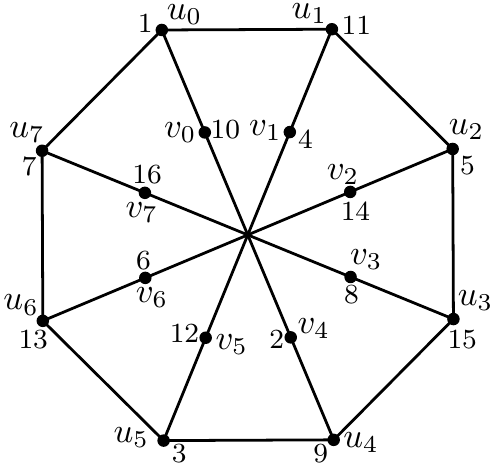}
\caption{The graph $GP(8,4)$ with a neighborhood-prime labeling}\label{GPlastcase}
\end{center}
\end{figure}

\begin{lemma}
The generalized Petersen graph $GP(n,n/2)$ is neighborhood-prime for all $n\geq 8$ with $n\equiv 0\pmod{4}$.
\end{lemma}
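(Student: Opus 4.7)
My plan is to exhibit an explicit neighborhood-prime labeling, since by Alspach's classification this is the unique even non-Hamiltonian generalized Petersen graph, and Theorem~\ref{Hamiltonian} together with the chord lemmas do not apply directly. First I would label the outer $n$-cycle $u_0 u_1 \cdots u_{n-1}$ using Equation~\eqref{cycle} with the labels $\{1, 2, \ldots, n\}$. Since $n \equiv 0 \pmod 4$, the argument in the proof of Theorem~\ref{Hamiltonian} shows that $\gcd\{f(u_{j-1}), f(u_{j+1})\} = 1$ for every outer vertex $u_j$; adjoining the spoke neighbor $v_j$ to $N(u_j)$ can only preserve this gcd, so the neighborhood-prime condition at every outer vertex is automatically satisfied, independent of how we label the inner vertices.

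It then suffices to assign the labels $\{n+1, \ldots, 2n\}$ bijectively to the inner vertices $v_0, \ldots, v_{n-1}$ so that, for each inner edge $v_i v_{i+n/2}$, both $\gcd(f(u_i), f(v_{i+n/2})) = 1$ and $\gcd(f(u_{i+n/2}), f(v_i)) = 1$. A key structural simplification is that, because $n/2$ is even, the matched outer positions $u_j$ and $u_{j+n/2}$ receive labels of the form $a$ and $a + n/4$ under Equation~\eqref{cycle}; thus each of the $n/2$ inner pairs carries a uniform pair of coprimality constraints linking inner labels to a pair of outer labels of this shape.

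I would then write down an explicit assignment and verify it directly. The tightest constraints arise for inner pairs whose outer labels are both even (which forces both inner labels to be odd). A count shows that the number of odd outer labels used in such positions matches exactly the $n/2$ odd labels available in $\{n+1, \ldots, 2n\}$; the remaining inner vertices, whose outer-label counterparts are odd, can freely take the even labels subject only to avoiding small odd prime divisors. The existence of a valid assignment can be established either by a direct greedy construction (e.g.\ match each outer-even-label inner vertex to the smallest available odd label coprime to its partner's outer label, then fill in the rest with the even inner labels, choosing orientations to avoid common factors) or by invoking Hall's theorem on the bipartite coprimality graph between inner vertices and inner labels.

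The main obstacle is that the count of odd labels equals the count of inner endpoints that must be odd, so Hall's condition must be checked essentially with equality in the worst case; the outer labels that have multiple small prime factors (such as $6$, $12$, $30$) further narrow which odd labels each inner vertex can accept. The base case $n = 8$, shown in Figure~\ref{GPlastcase}, handles the tightest instance, and for larger $n$ the abundance of labels in $\{n+1, \ldots, 2n\}$ coprime to a given $f(u_{j+n/2}) \le n$ makes the greedy matching straightforward to complete.
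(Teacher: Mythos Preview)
Your strategy---fix the outer $n$-cycle with the standard labeling from Equation~\eqref{cycle} using labels $[n]$, then complete the inner vertices with $\{n+1,\ldots,2n\}$---is sound in principle, but as written it is a sketch, not a proof. The gap sits exactly where you place the ``main obstacle'': you assert that a greedy rule or Hall's condition produces the required bijection from inner vertices to labels, yet you neither verify Hall's condition nor prove the greedy rule terminates successfully. The tightness you note (the odd-label supply matches the demand exactly when $n\equiv 0\pmod 8$) is genuine, and ``abundance for larger $n$ makes the greedy matching straightforward'' is not an argument; one bad configuration would kill the claim, and you give no mechanism to exclude it.

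Your matching problem can in fact be closed cleanly: since the outer labels are a permutation of $[n]$ and each inner vertex $v_i$ needs its label to be coprime to the single outer label $f(u_{i+n/2})$, what you require is a bijection $h:[n]\to\{n+1,\ldots,2n\}$ with $\gcd(k,h(k))=1$ for all $k$. This is exactly the Pomerance--Selfridge result stated later in the paper as Theorem~\ref{selfridge}; citing it would finish your argument in one line. The paper's own proof takes a different and more self-contained route: it writes down a single explicit formula on all $2n$ vertices simultaneously, arranged so that in every neighborhood the two relevant labels are consecutive integers (or one of them is $1$). That design avoids any matching or coprimality analysis altogether and reduces the verification to checking a few arithmetic identities.
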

\begin{proof}
First, note that in $GP(n,n/2)$ the interior vertices $v_i$ are connected such that for each $i\in\mathbb{Z}_n$, $N(v_i)=\{u_i,v_{i+n/2}\}$ where indices are reduced modulo $n$.  We label the vertices with the function $f:V(G)\to \mathbb{Z}_{2n}$ as follows. 
\begin{align*}
f(u_{1+2t})&\equiv n+3+4t\pmod{2n} \;\;\;\; \text{ for } t\in\mathbb{Z}_{n/2}\\
f(u_{2t})&\equiv 1+4t\pmod{2n} \;\;\;\; \text{ for } t\in\mathbb{Z}_{n/2}\\
f(v_{2t})&\equiv n+2+4t\pmod{2n} \;\;\;\; \text{ for } t\in\mathbb{Z}_{n/2}\\
f(v_{1+2t})&\equiv 4+4t\pmod{2n} \;\;\;\; \text{ for } t\in\mathbb{Z}_{n/2}\\
\end{align*}

For each vertex $u_i$ with $i\in\mathbb{Z}_{n-1}$, we have $v_i, u_{i+1}\in N(u_i)$, and these vertices are labeled by consecutive integers.  All interior vertices $v_i$ have a neighborhood of $N(v_i)=\{u_i,v_{i+n/2}\}$.  Similarly, these vertices are labeled by consecutive integers.  Finally, the vertex $u_{n-1}$ has $u_0$ in its neighborhood, which is labeled as $1$.  In all cases, the $\gcd$ of the labels in each neighborhood is $1$, proving the graph is neighborhood-prime.
\end{proof}

Now that all generalized Petersen graphs with $n$ being even have been shown to be neighborhood-prime, we next consider the odd cases.  The lone cases that are not Hamiltonian are when $n\equiv 5\pmod{6}$ with $k=2$ or $(n-1)/2$.  For the remaining cases, for which $|V(GP(n,k))|=2n\equiv 2\pmod{4}$ with $n$ being odd, Lemma~\ref{oddCycle} can be used to prove this case.
This is summarized in the following corollary. See Figure~\ref{GenPetOddCycle} for an example of $GP(9,3)$ being labeled using the Hamiltonian cycle $$(v_0, v_3, v_6, u_6, u_5, v_5, v_2, v_8, u_8, u_7, v_7, v_1, v_4, u_4, u_3, u_2, u_1, u_0)$$
and using the chord $v_0 v_6$ for the labeling described in Lemma~\ref{oddCycle}.  Generally, such a chord is guaranteed to exist since $(u_0, u_1,\ldots, u_{n-1})$ forms an odd cycle.

\begin{figure}[htb]
\begin{center}
\includegraphics[scale=1]{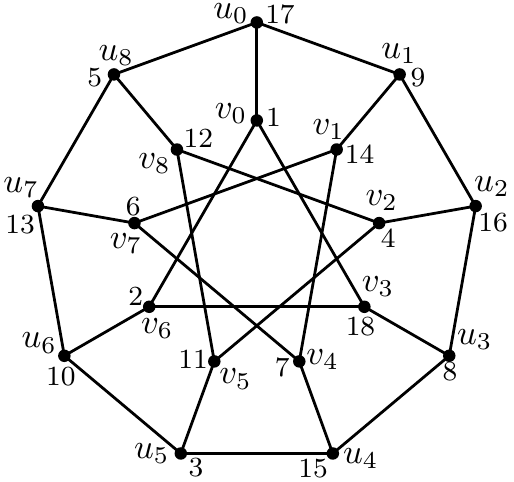}
\caption{The graph $GP(9,3)$ with a neighborhood-prime labeling}\label{GenPetOddCycle}
\end{center}
\end{figure}

\begin{corollary}
The generalized Petersen graph $GP(n,k)$ for odd $n$ is neighborhood-prime, except possibly when $n\equiv 5\pmod{6}$ and $k=2$ or $(n-1)/2$.
\end{corollary}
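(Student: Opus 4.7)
The plan is to combine Lemma~\ref{oddCycle} with the structure of $GP(n,k)$ together with Alspach's classification of Hamiltonian generalized Petersen graphs. First I would observe that $|V(GP(n,k))| = 2n$, so for odd $n$ the order is $\equiv 2 \pmod 4$. This rules out a direct appeal to Theorem~\ref{Hamiltonian}, which is precisely why Lemma~\ref{oddCycle} is the natural tool: it hands us neighborhood-prime labelings of Hamiltonian graphs on $n' \equiv 2 \pmod 4$ vertices provided an odd cycle is present.

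Next I would exhibit an odd cycle inside $GP(n,k)$ whenever $n$ is odd. The outer rim is the cycle $(u_0, u_1, \ldots, u_{n-1}, u_0)$ built from the edges $u_i u_{i+1}$ (indices mod $n$), which is a cycle of length exactly $n$. Since $n$ is odd by assumption, this is the desired odd cycle, and it lives in $GP(n,k)$ for every $k$.

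Finally, Alspach's theorem, recalled earlier in the section, states that $GP(n,k)$ is Hamiltonian for every $n$ and $k$ except the two families (i) $n \equiv 5 \pmod 6$ with $k = 2$ or $k = (n-1)/2$, and (ii) $n \equiv 0 \pmod 4$, $n\geq 8$, with $k = n/2$. Family (ii) requires $n$ even and so does not intersect the odd-$n$ case. Thus for odd $n$ outside the excluded family (i), $GP(n,k)$ is simultaneously Hamiltonian and contains an odd cycle, so Lemma~\ref{oddCycle} applies and delivers a neighborhood-prime labeling.

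There is no real obstacle here: the proof is essentially the assembly of two cited results. The only subtlety is to be clear that the labeling produced by Lemma~\ref{oddCycle} uses some Hamilton cycle of $GP(n,k)$ (whose existence is guaranteed by Alspach) together with the chord coming from the outer rim, rather than requiring any specific Hamilton cycle; the excluded cases are precisely those where Alspach's hypothesis fails and therefore must be left open.
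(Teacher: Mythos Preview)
Your proposal is correct and follows essentially the same approach as the paper: observe that for odd $n$ the order $2n$ is $2\pmod 4$, note that the outer rim $(u_0,u_1,\ldots,u_{n-1})$ is an odd cycle, invoke Alspach's classification to get Hamiltonicity outside the excluded family, and then apply Lemma~\ref{oddCycle}. The paper presents exactly this argument in the paragraph preceding the corollary, pointing to the outer cycle $(u_0,\ldots,u_{n-1})$ as the source of the needed odd cycle.
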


The last remaining cases of $n\equiv 5\pmod{6}$ with $k=2$ or $(n-1)/2$ are isomorphic graphs, and are already proven by Patel~\cite{Patel} to be neighborhood-prime since $n$ is odd, making the only common divisor of $n$ and $k=2$ be 1.  The previous three Corollaries and Lemmas, along with Patel's results, combine to prove the main result of this section, Theorem~\ref{gen_pet}.

\subsection{Grid Graphs}

We now move our attention to another class of graphs that are often Hamiltonian: grid graphs.  A grid graph is a Cartesian product of paths, $P_m\times P_n$, and is known to be Hamiltonian if either $m$ or $n$ is even, or both.  We will show that all grid graphs are neighborhood-prime using several results depending on the parity of $m$ and $n$. In the case of $|V(P_m\times P_n)|=m n\equiv 0\pmod{4}$, Theorem~\ref{Hamiltonian} directly proves the following.  See Figure~\ref{grids1} for an example of the graph $P_4\times P_5$ with its Hamilton cycle displayed with solid edges.

\begin{corollary}
The grid graph $P_m\times P_n$ is neighborhood-prime if $m$ and $n$ are both even or if one of $m$ or $n$ is equivalent to $0$ modulo $4$.  
\end{corollary}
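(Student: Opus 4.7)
The plan is a direct application of Theorem~\ref{Hamiltonian}, so the work really amounts to verifying its two hypotheses, Hamiltonicity and the order condition $n\not\equiv 2\pmod 4$, in each case covered by the corollary.

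First I would recall the classical fact that the grid $P_m\times P_n$ (with $m,n\geq 2$) is Hamiltonian whenever at least one of $m$ or $n$ is even; this is noted in the paragraph preceding the statement. Under either assumption of the corollary (both $m,n$ even, or one of $m,n$ divisible by $4$), at least one side length is even, so the Hamiltonicity hypothesis of Theorem~\ref{Hamiltonian} is satisfied.

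Next I would check the order modulo $4$. The vertex set has size $mn$, so I need $mn\not\equiv 2\pmod 4$, i.e.\ $4\mid mn$. If $m$ and $n$ are both even, then $m=2m'$ and $n=2n'$ gives $mn=4m'n'\equiv 0\pmod 4$. If instead (without loss of generality) $m\equiv 0\pmod 4$, then $mn$ is already a multiple of $4$ regardless of the parity of $n$. In either case $mn\equiv 0\pmod 4$, so Theorem~\ref{Hamiltonian} applies and produces a neighborhood-prime labeling.

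Since both hypotheses are handled by small case checks, there is essentially no obstacle: the entire proof should be only a few lines. The genuinely hard cases for grid graphs, namely $m\equiv 2\pmod 4$ with $n$ odd (where $mn\equiv 2\pmod 4$) and the non-Hamiltonian odd$\times$odd case, are not covered by this corollary and must be addressed by separate arguments later in the subsection.
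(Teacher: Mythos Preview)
Your proposal is correct and follows exactly the approach the paper takes: the paper simply notes that in these cases $mn\equiv 0\pmod 4$ and the grid is Hamiltonian, so Theorem~\ref{Hamiltonian} applies directly. Your write-up merely spells out the parity check in slightly more detail than the paper does.
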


\begin{figure}[htb]
\begin{center}
\includegraphics[scale=.85]{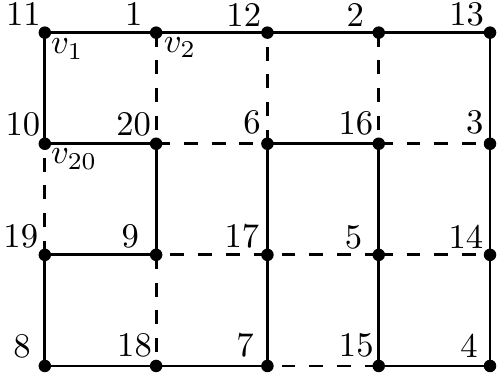}\hspace{1cm}\includegraphics[scale=.85]{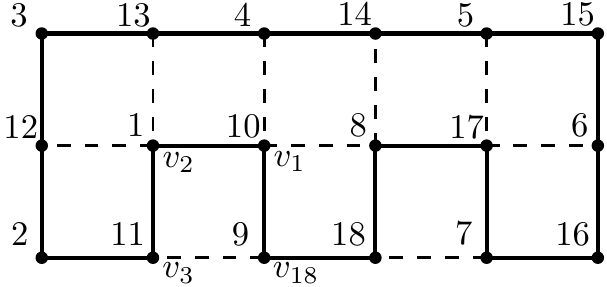}
\end{center}
\caption{The grid graphs $P_4\times P_5$ and $P_3\times P_6$}\label{grids1}
\end{figure}

Grid graphs that are Hamiltonian but are of order $n\equiv 2\pmod{4}$ naturally fit the criteria of Lemma~\ref{HamChord4k} with one of the squares within the grid viewed as the $4$-cycle, which proves the following case to be neighborhood-prime.  Figure~\ref{grids1} includes an example of $P_3\times P_6$ with an NPL where the chord between $v_{18}$ and $v_3$ allows the neighborhood of $v_{18}$ to have labels with a greatest common divisor of $1$.

\begin{corollary}
The grid graph $P_m\times P_n$ is neighborhood-prime if exactly one of $m$ or $n$ is odd and the other is equivalent to $2$ modulo $4$.
\end{corollary}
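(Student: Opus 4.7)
Without loss of generality, assume $m$ is odd and $n \equiv 2 \pmod{4}$. Then $mn \equiv 2m \equiv 2 \pmod{4}$, so Theorem~\ref{Hamiltonian} does not directly apply, but Lemma~\ref{HamChord4k} will. My plan is to exhibit a Hamilton cycle $C$ in $P_m \times P_n$ together with a single chord that, combined with three consecutive edges of $C$, forms a $4$-cycle (a unit grid square). Applying Lemma~\ref{HamChord4k} with $k=1$ then gives a neighborhood-prime labeling.

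The construction of $C$ is the standard snake. Since $n$ is even, one Hamilton cycle is
\[
(1,1),(1,2),\ldots,(1,n),(2,n),(3,n),\ldots,(m,n),(m,n-1),(m-1,n-1),(m-1,n-2),(m,n-2),(m,n-3),\ldots
\]
snaking upward in the block of rows $2,\ldots,m$ and columns $2,\ldots,n-1$ (this block has $(m-1)(n-2)$ vertices, which is even since $n$ is even, so the snake fits), and finally closing via $(2,1),(3,1),\ldots,(m,1)$ back up to $(1,1)$. The exact order is flexible — any standard snake Hamilton cycle on $P_m \times P_n$ with $n$ even works.

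Now observe that at any two-step "turn" of the snake through the interior, three sides of a unit square lie in $C$. For instance, near the corner $(m,n)$, the cycle traverses the path $(m,n),(m,n-1),(m-1,n-1),(m-1,n)$, which consists of three edges of the square with corners $(m-1,n-1),(m-1,n),(m,n-1),(m,n)$. The remaining fourth edge $(m,n)(m-1,n)$ of that square is an edge of $P_m \times P_n$ that, by construction, does not appear in $C$ (the $C$-neighbor of $(m,n)$ in the "up" direction is $(m-1,n)$ along $C$ only via a long detour, not directly). Thus this edge is a chord forming a cycle of length $4$ using only $C$-edges and itself, and Lemma~\ref{HamChord4k} applies with $k=1$.

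The main obstacle is simply verifying the Hamilton-cycle construction carefully for all admissible $m$ and $n$, and confirming that the chosen square's fourth edge is genuinely absent from $C$. This is routine structural bookkeeping about snake cycles rather than a combinatorial difficulty: every snake Hamilton cycle on a grid has many "turn squares" whose three sides lie on the cycle, so the chord needed for Lemma~\ref{HamChord4k} is always readily available.
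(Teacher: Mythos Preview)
Your strategy is exactly the paper's: since one dimension is even the grid is Hamiltonian, and since $mn\equiv 2\pmod 4$ you invoke Lemma~\ref{HamChord4k} using a single grid square as the $4$-cycle. The paper says no more than this.

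However, your concrete choice of chord is wrong. In the snake you describe, column $n$ is traversed top-to-bottom as $(1,n),(2,n),\ldots,(m,n)$, so the edge $(m-1,n)(m,n)$ \emph{is} an edge of $C$, not a chord. Likewise, the path you claim $C$ traverses, namely $(m,n),(m,n-1),(m-1,n-1),(m-1,n)$, is not in $C$: by your own description $C$ goes from $(m-1,n-1)$ to $(m-1,n-2)$, not to $(m-1,n)$. The fix is immediate: take instead the square on $(m,n-1),(m-1,n-1),(m-1,n-2),(m,n-2)$. Three of its sides are consecutive edges of your snake, while the fourth side $(m,n-1)(m,n-2)$ is a genuine chord (in $C$ the neighbours of $(m,n-1)$ are $(m,n)$ and $(m-1,n-1)$), so Lemma~\ref{HamChord4k} applies with $k=1$ as intended.
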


The grid graph $P_m\times P_n$, with $m$ and $n$ both being odd, is the only non-Hamiltonian case.  However, it has a circumference of $mn-1$, meaning we can apply Proposition~\ref{circumference} to prove the following case in which $mn\not\equiv 3\pmod{4}$ is neighborhood-prime.  See Figure~\ref{grids2} for an example of $P_3\times P_7$ with solid edges displaying the cycle of length $20$.

\begin{corollary}
The grid graph $P_m\times P_n$ is neighborhood-prime if $m,n\equiv 1\pmod{4}$ or $m,n\equiv 3\pmod{4}$.
\end{corollary}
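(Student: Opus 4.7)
The plan is to reduce this corollary directly to Proposition~\ref{circumference} by verifying its two hypotheses: (i) that the order $mn$ avoids $3 \pmod 4$, and (ii) that the circumference of $P_m \times P_n$ is exactly $mn - 1$.

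First I would handle the arithmetic. In the case $m,n \equiv 1 \pmod 4$, we get $mn \equiv 1 \pmod 4$; in the case $m,n \equiv 3 \pmod 4$, we get $mn \equiv 9 \equiv 1 \pmod 4$. So in both regimes the order of the graph satisfies $mn \not\equiv 3 \pmod 4$, which is exactly the arithmetic hypothesis needed to invoke Proposition~\ref{circumference}.

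Next I would justify the circumference claim, which is stated informally in the paragraph preceding the corollary. Since $P_m \times P_n$ is bipartite with color classes of sizes $(mn+1)/2$ and $(mn-1)/2$ (these are unequal because $m$ and $n$ are both odd), it admits no Hamilton cycle; hence the circumference is at most $mn-1$. To show the circumference is exactly $mn-1$, it suffices to exhibit a cycle of that length, which is easy: traverse the grid row by row in a boustrophedon pattern and skip a single vertex of the larger color class (e.g., a corner) to close up the path into a cycle missing only that vertex. This is a well-known construction, so I would only indicate it briefly rather than verify it cell-by-cell.

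Finally, having confirmed that $mn \not\equiv 3 \pmod 4$ and that the circumference of $P_m \times P_n$ is $n' - 1$ where $n' = mn$ is the order of the graph, Proposition~\ref{circumference} applies immediately and yields a neighborhood-prime labeling. The main (and essentially only) obstacle is the circumference claim; but since the graph is connected, bipartite with color classes differing by exactly $1$, and visibly contains a near-Hamilton cycle via the boustrophedon path, this is a short verification rather than a real obstruction.
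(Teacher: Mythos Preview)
Your proposal is correct and matches the paper's approach exactly: compute $mn\equiv 1\pmod 4$ in both cases and invoke Proposition~\ref{circumference} using the known fact that an odd-by-odd grid has circumference $mn-1$. The only difference is that you supply a brief justification of the circumference claim (bipartiteness for the upper bound, a boustrophedon-type construction for the lower bound), whereas the paper simply asserts it.
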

\begin{proof}
In each case, the size of the vertex set is $m n\equiv 1\pmod{4}$.  Therefore, since the circumference of a grid graph with $m$ and $n$ both being odd is $m n-1$, we can apply Proposition~\ref{circumference} to demonstrate the graph is neighborhood-prime.
\end{proof}

\begin{figure}[htb]
\begin{center}
\includegraphics[scale=1]{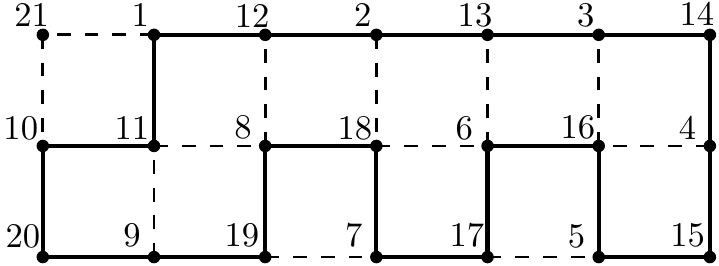}\hspace{.5cm}\includegraphics[scale=1]{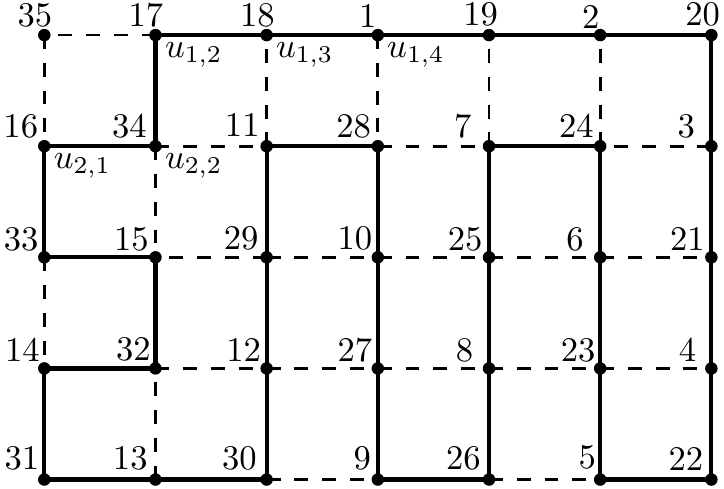}
\end{center}
\caption{The grid graphs $P_3\times P_7$ (left) and $P_5\times P_7$ (right)}\label{grids2}
\end{figure}

The final case that remains to show that all grid graphs are neighborhood-prime is when $m\equiv 1\pmod{4}$ and $n\equiv 3\pmod{4}$ (or vice versa).  This graph still has a circumference of $mn-1$, but $mn\equiv 3\pmod{4}$, so Proposition~\ref{circumference} cannot apply directly.  We introduce a more explicit labeling to handle this last case, as shown in the example of $P_5\times P_7$ in Figure~\ref{grids2}.

\begin{lemma}
The grid graph $P_m\times P_n$ is neighborhood-prime if $m\equiv 1\pmod{4}$ and $n\equiv 3\pmod{4}$.
\end{lemma}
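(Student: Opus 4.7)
My plan is to produce an explicit neighborhood-prime labeling, in the spirit of Proposition~\ref{circumference} but adapted to the fact that now $mn \equiv 3 \pmod 4$, so Proposition~\ref{circumference} cannot be invoked directly. Since both $m$ and $n$ are odd, $P_m \times P_n$ is bipartite with color classes of sizes differing by exactly one, so its circumference is $mn-1$ and the vertex that any longest cycle must miss lies in the majority color class. I would select a distinguished vertex $u$ in the majority class adjacent to a corner of the grid---say the vertex next to a corner along a side---so that $u$ has exactly three grid-neighbors.

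Next I would construct an explicit $(mn-1)$-cycle $C = (v_1,\ldots, v_{mn-1})$ that avoids $u$ (a carefully routed ``snake'' traversal skirting $u$), label its vertices via the cycle formula~\eqref{cycle} with $mn-1$ substituted for $n$, and assign label $mn$ to $u$. Because $mn - 1 \equiv 2 \pmod 4$, as in the proof of Lemma~\ref{HamChord4k} this labeling of $C$ violates the neighborhood-prime condition at exactly one vertex $v_\star$, whose two cycle-neighbors bear even labels $\tfrac{mn-1}{2} + 1$ and $mn - 1$ with $\gcd = 2$. I would orient $C$ so that $v_\star$ is also a grid-neighbor of $u$: the odd label $mn$ then enters $f(N(v_\star))$, and since $\gcd(mn-1, mn) = 1$, the overall gcd at $v_\star$ collapses to $1$.

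It remains to verify $\gcd f(N(u)) = 1$. All three grid-neighbors of $u$ lie on $C$, so it suffices to arrange $C$ so that two of them appear at consecutive positions $v_i, v_{i+1}$ on the cycle with $i$ even; formula~\eqref{cycle} then assigns them consecutive integers, which are automatically coprime. So the final task is a concrete description of the snake routing of $C$ which simultaneously realizes both geometric requirements: (a) the ``failure'' endpoint $v_\star$ of the cycle labeling is a grid-neighbor of $u$, and (b) two other grid-neighbors of $u$ occupy positions $v_i, v_{i+1}$ with $i$ even.

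The main obstacle is exactly this combinatorial-geometric coupling: constructing a single family of snake routings, parametrized by $m \equiv 1$ and $n \equiv 3 \pmod 4$, that pins the cycle's ``last'' vertex at a specific grid location relative to $u$ and also lines up two of $u$'s neighbors consecutively along the cycle with the right parity. I would base the construction on the $P_5 \times P_7$ example of Figure~\ref{grids2}, which already exhibits such a routing, and then lift it to general $m,n$ by appending $4$-column and $4$-row blocks---these blocks preserve the parities of all relevant cycle positions, so the required coprimality properties at $v_\star$ and at $u$ propagate verbatim. Once the routing is fixed, the rest is a short verification.
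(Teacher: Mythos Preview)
Your overall strategy matches the paper's exactly: delete one vertex $u$, route an $(mn-1)$-cycle through the rest, apply the cycle labeling of Equation~\eqref{cycle}, give $u$ the label $mn$, and check that the unique failure vertex $v_\star$ of the cycle labeling is repaired by the odd label $mn$ while $N(u)$ also has coprime labels. However, two concrete errors derail your execution.

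First, your choice of $u$ is impossible. In $P_m\times P_n$ with $m,n$ odd, the four corners all lie in the majority color class (their coordinate sums are even), so every grid-neighbor of a corner lies in the \emph{minority} class. You correctly observe that any $(mn-1)$-cycle must miss a vertex of the majority class, yet then take $u$ to be ``the vertex next to a corner along a side,'' which is a minority-class vertex. No $(mn-1)$-cycle can avoid such a $u$, so the construction cannot even begin. The paper sidesteps this by deleting the corner $u_{1,1}$ itself (degree~$2$, majority class).

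Second, your verification of $\gcd f(N(u))=1$ rests on the claim that consecutive cycle positions $v_i,v_{i+1}$ with $i$ even receive consecutive integers under~\eqref{cycle}. They do not: $f(v_i)=i/2$ while $f(v_{i+1})=\lfloor N/2\rfloor+(i+2)/2$. What \emph{is} true is that positions two apart of the same parity, $v_i$ and $v_{i+2}$, receive consecutive labels. The paper exploits precisely this: with $u=u_{1,1}$ the two neighbors $u_{1,2}$ and $u_{2,1}$ occupy cycle positions $v_{mn-1}$ and $v_{mn-3}$, getting the consecutive values $(mn-1)/2$ and $(mn-3)/2$. Simultaneously $u_{1,2}=v_{mn-1}$ is the failure vertex $v_\star$, and its cycle-neighbor $u_{2,2}$ carries the label $mn-1$, so adjoining $u_{1,1}$ with label $mn$ gives consecutive labels in $N(u_{1,2})$. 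Thus removing a corner, not a degree-$3$ side vertex, makes both checks immediate and removes the need for your parity-preserving block extension.
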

\begin{proof}
Consider the vertices of this grid graph as $u_{i,j}$ with $1\leq i\leq m$ and $1\leq j\leq n$.  There exists a cycle of length $m n-1$ (missing vertex $u_{1,1}$) using the following sequence of vertices, as seen in the example in Figure~\ref{grids2}, 
$$C=(u_{1,3},u_{1,4},\ldots, u_{1,n}, u_{2,n},u_{3,n},\ldots, u_{m,n}, u_{m,n-1}, u_{m-1,n-1},\ldots u_{2,n-1}, u_{2,n-2}, u_{3,n-2},\ldots, u_{m,n-2},\ldots,$$
$$u_{m,3}, u_{m,2}, u_{m,1}, u_{m-1,1}, u_{m-1,2}, u_{m-2,2}, u_{m-2,1},\ldots, u_{3,1}, u_{2,1},u_{2,2}, u_{1,2}).$$
If we label $C$ using the standard cycle labeling from Equation~\eqref{cycle}, the labels on the neighbors $N_C(v)$ satisfy the neighborhood-prime labeling condition for all vertices except the last vertex $u_{1,2}$.  Two of the neighbors of $u_{1,2}$ are $u_{1,3}$ and $u_{2,2}$, which are labeled by $m n-1$ and $\lfloor(m n-1)/2\rfloor+1$, both of which are even.  However, we assign to the vertex $u_{1,1}$, which is left out of the cycle, the label $m n$.  Since $u_{1,1}\in N(u_{1,2})$, the neighborhood of $u_{1,2}$ now has consecutive labels. Also notice that the neighborhood of $u_{1,1}$ contains two vertices with consecutive labels, resulting in the labeling being an NPL.
\end{proof}

The culmination of this section is the following result which follows from all of the previous four Corollaries and Lemmas in this section.

\begin{theorem}
For all positive integers $m$ and $n$, $P_m\times P_n$ is neighborhood-prime.
\end{theorem}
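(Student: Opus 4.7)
The plan is to prove the theorem by case analysis on the residues of $m$ and $n$ modulo $4$, observing that each of the $16$ residue pairs falls under one of the four preceding results in this section. Since $P_m \times P_n \cong P_n \times P_m$, we may assume without loss of generality that $m \leq n$ (or otherwise interchange roles when needed), which cuts the case analysis roughly in half.

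First I would handle the case where at least one of $m,n$ is even, noting that if both are even or if one is $\equiv 0 \pmod 4$, then $mn \equiv 0 \pmod 4$ and the grid is Hamiltonian, so the first Corollary applies directly via Theorem~\ref{Hamiltonian}. Next, if exactly one of $m,n$ is odd and the other is $\equiv 2 \pmod 4$, then $mn \equiv 2 \pmod 4$, and because the grid is still Hamiltonian while containing a $4$-cycle sharing three edges with the Hamilton cycle, the second Corollary applies via Lemma~\ref{HamChord4k}. This reduces the problem to the purely odd cases, where the grid is no longer Hamiltonian.

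For the remaining situation in which both $m$ and $n$ are odd, the graph has circumference $mn-1$. If $m,n \equiv 1 \pmod 4$ or $m,n \equiv 3 \pmod 4$, then $mn \equiv 1 \pmod 4$, so $mn \not\equiv 3 \pmod 4$, and Proposition~\ref{circumference} applies directly through the third Corollary. The one remaining residue class is $\{m \equiv 1, n \equiv 3\} \pmod 4$ (or vice versa), in which case $mn \equiv 3 \pmod 4$ and Proposition~\ref{circumference} fails; this is exactly the case covered by the final Lemma via its explicit labeling along the cycle $C$ of length $mn-1$ together with the assignment $f(u_{1,1}) = mn$.

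A quick verification that these four cases partition $\Zzz_4 \times \Zzz_4$ (up to the symmetry swapping $m$ and $n$) completes the argument. The only real obstacle was the last case, $mn \equiv 3 \pmod 4$ with both sides odd, since neither the Hamilton-cycle approach nor Proposition~\ref{circumference} applies; but that obstruction was already overcome in the preceding Lemma by carefully placing the omitted vertex $u_{1,1}$ so that its label $mn$ produces consecutive-integer labels within $N(u_{1,2})$. Given that, the final theorem is essentially a bookkeeping assembly of the four preceding results.
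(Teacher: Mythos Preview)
Your proposal is correct and matches the paper's approach exactly: the paper's ``proof'' is a single sentence stating that the theorem follows from the four preceding Corollaries and Lemmas in the section, and your case analysis simply makes explicit which residue classes of $(m,n)\bmod 4$ are handled by which of those four results. There is nothing additional in the paper's argument beyond what you have written.
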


We can extend the reach of Theorem~\ref{Hamiltonian} for higher dimensional grids as well since $P_{\ell}\times P_m\times P_n$ is Hamiltonian except when $\ell, m,$ and $n$ are all odd.  We also need to ensure that the cycle length avoids being equivalent to $2\pmod{4}$.

\begin{theorem}
The three-dimensional grid graph $P_{\ell}\times P_m\times P_n$ is neighborhood-prime if the number of vertices satisfies $\ell m n \equiv 0\pmod{4}$.
\end{theorem}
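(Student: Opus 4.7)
The plan is to apply Theorem~\ref{Hamiltonian} essentially as a black box, with the only preliminary work being a parity check on the order. The statement bundles together the two hypotheses of that theorem (Hamiltonicity and order not $\equiv 2 \pmod 4$), so the proof should consist of extracting both from the single assumption $\ell m n \equiv 0 \pmod 4$.

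First, I would check Hamiltonicity. An odd product is odd, so the assumption $\ell m n \equiv 0 \pmod 4$ forces at least one of $\ell,m,n$ to be even; in particular, they are not all odd. By the Hamiltonicity fact for three-dimensional grids invoked in the sentence immediately preceding the statement, $P_\ell \times P_m \times P_n$ is therefore Hamiltonian. Second, I would verify the order condition: the graph has exactly $\ell m n$ vertices, and since $\ell m n \equiv 0 \pmod 4$ we automatically have $\ell m n \not\equiv 2 \pmod 4$. Both hypotheses of Theorem~\ref{Hamiltonian} being met, the labeling in Equation~\eqref{cycle} applied to any Hamilton cycle of $P_\ell \times P_m \times P_n$ is neighborhood-prime.

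The main obstacle, such as it is, lies in the Hamiltonicity fact rather than in any new argument: the theorem quite literally reduces to ``Hamiltonian + right parity $\Rightarrow$ NPL.'' If one wished to make the Hamiltonicity self-contained, the natural route is a snake-style construction: assume without loss of generality that $\ell$ is even, fix a Hamilton path $P$ in $P_m \times P_n$ between two vertices that agree on one coordinate (such a path always exists, either by a direct zig-zag when one of $m,n$ is even or by a standard corner-to-adjacent-corner path when both are odd), and then traverse the $\ell$ slabs $\{i\}\times P_m \times P_n$ alternately along $P$ and its reverse, stitching consecutive slabs together by a single edge in the $P_\ell$-direction. This yields a Hamilton cycle, but since the paper treats the Hamiltonicity as given, the proof itself need be no longer than the two paragraphs above.
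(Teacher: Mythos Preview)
Your proposal is correct and matches the paper's approach exactly: the paper states the theorem without a formal proof, relying on the sentence immediately preceding it (Hamiltonicity of $P_\ell\times P_m\times P_n$ when not all of $\ell,m,n$ are odd, together with the order condition) and Theorem~\ref{Hamiltonian}. Your two-step extraction of Hamiltonicity and the parity check is precisely what the paper intends, and your optional snake-style construction for self-containment goes beyond what the paper provides.
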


We believe that the results in this section can each be extended to show all higher dimensional grid graphs are neighborhood-prime.

\begin{conjecture}
The graph $P_{n_1}\times P_{n_2}\times \cdots \times P_{n_t}$ is neighborhood-prime.
\end{conjecture}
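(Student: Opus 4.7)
The plan is to proceed by induction on the dimension $t$, mirroring the case analysis used for $P_m \times P_n$ above. Let $N := n_1 n_2 \cdots n_t$ denote the order of the graph. The base case $t \leq 2$ is the previous theorem. For $t \geq 3$, the argument splits according to the parities of the $n_i$ and the residue of $N$ modulo $4$.

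If at least one factor $n_i$ is even, then $P_{n_1} \times \cdots \times P_{n_t}$ is Hamiltonian (a classical fact: if all $n_i$ were odd the graph would be bipartite with unbalanced color classes). When $N \equiv 0 \pmod 4$, Theorem~\ref{Hamiltonian} yields a neighborhood-prime labeling directly. When $N \equiv 2 \pmod 4$, one must show that a Hamilton cycle $C$ can be chosen so that three consecutive edges of $C$ lie on a single unit $4$-face of the grid; the fourth edge of that face is then a chord forming a $4$-cycle, and Lemma~\ref{HamChord4k} applies. The existence of such a ``square-hugging'' Hamilton cycle can be established by taking a standard serpentine Hamilton cycle and performing a local rerouting near an appropriate face, a routine but somewhat tedious construction.

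If all $n_i$ are odd, then $N$ is odd and the graph is bipartite with color classes differing by one vertex, so it is not Hamiltonian. We would first show the circumference equals $N-1$ by constructing a Hamilton cycle on $G - u$ for some $u$ in the majority color class. Such a cycle can be assembled by inducting on the last coordinate: slice the grid into $n_t$ copies of $P_{n_1} \times \cdots \times P_{n_{t-1}}$ and thread Hamilton paths through consecutive slices, deleting a single vertex from the middle slice. Given such a near-Hamilton cycle $C$, Proposition~\ref{circumference} handles the subcase $N \equiv 1 \pmod 4$ immediately. The subcase $N \equiv 3 \pmod 4$ would be handled by an explicit labeling analogous to the final lemma for $P_m \times P_n$: label $C$ via Equation~\eqref{cycle}, assign the unused label $N$ to $u$, and arrange that $u$ is adjacent to the (unique) cycle vertex whose neighborhood-gcd would otherwise fail to equal $1$.

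The main obstacle is the all-odd case. Establishing that the circumference of a multidimensional all-odd grid equals $N-1$ and, more importantly, producing a near-Hamilton cycle \emph{compatible with our labeling scheme} (so that the omitted vertex can be placed adjacent to the problematic cycle vertex) becomes combinatorially intricate as $t$ grows. A secondary but non-negligible difficulty is the bookkeeping required in the $N \equiv 2 \pmod 4$ subcase to guarantee that the local rerouting preserves Hamiltonicity while leaving three face-edges consecutive on the cycle. These are the points at which a straightforward extension of the two-dimensional proof could break down, and where new structural ideas about grid Hamiltonicity may be needed.
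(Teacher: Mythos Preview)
The statement you are attempting to prove is presented in the paper as a \emph{conjecture}, not a theorem; the paper offers no proof, only the remark ``We believe that the results in this section can each be extended to show all higher dimensional grid graphs are neighborhood-prime.'' So there is no paper proof to compare against.

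Your proposal is a reasonable outline in exactly the spirit the authors intend, and you are candid that it is not a complete argument. The case analysis you give (at least one even $n_i$ with $N\equiv 0\pmod 4$; at least one even $n_i$ with $N\equiv 2\pmod 4$; all $n_i$ odd with $N\equiv 1\pmod 4$; all $n_i$ odd with $N\equiv 3\pmod 4$) is the natural generalization of the two-dimensional argument, and the tools you invoke (Theorem~\ref{Hamiltonian}, Lemma~\ref{HamChord4k}, Proposition~\ref{circumference}, and an ad hoc labeling for the last case) are the right ones. You also correctly identify the genuine obstacles: controlling the position of a chord relative to a Hamilton cycle in the $N\equiv 2\pmod 4$ case, and constructing a near-Hamilton cycle in the all-odd grid that places the omitted vertex adjacent to the ``bad'' cycle vertex in the $N\equiv 3\pmod 4$ case. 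These are precisely the steps that prevent the two-dimensional proof from lifting mechanically, and the paper does not resolve them either. In short, your write-up is an accurate assessment of why this remains a conjecture rather than a theorem.
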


\subsection{Graphs Based on Restrictions}\label{restrict}

In this section, we use the results in Section~\ref{ham_cycle} to show that there are limits to the number of edges and vertices  a graph can have before we can guarantee the graph is neighborhood-prime. 

\begin{proposition}\label{ham_max_edges}
A Hamiltonian graph of order $n$ with $|E|> n\left\lfloor\frac{n-6}{8}\right\rfloor+n$ is neighborhood-prime.
\end{proposition}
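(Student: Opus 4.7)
The plan is to reduce to the case $n\equiv 2\pmod 4$, since for every other residue Theorem~\ref{Hamiltonian} already guarantees that any Hamiltonian graph of order $n$ is neighborhood-prime, independently of $|E|$. With $n\equiv 2\pmod 4$ fixed, pick a Hamilton cycle $C=v_1v_2\cdots v_nv_1$ in $G$; every edge of $G$ is then either an edge of $C$ or a chord of $C$.

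The key step is a classification of the chords. A chord $v_iv_j$ partitioning $C$ into arcs of lengths $d$ and $n-d$ creates, together with those arcs, cycles of lengths $d+1$ and $n-d+1$. I would call such a chord \emph{good} if either of these lengths is odd (so that $G$ contains an odd cycle and Lemma~\ref{oddCycle} applies) or a multiple of $4$ (so that Lemma~\ref{HamChord4k} applies directly). Using $n\equiv 2\pmod 4$, a routine case check by residue of $d$ modulo $4$ shows that a chord fails to be good exactly when $d\equiv 1\pmod 4$; call such chords \emph{bad}.

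Next I would count bad chords by an edge-counting argument. From a fixed vertex $v_i$ the bad chord neighbors are the $v_{i+d}$ with $d\in\{5,9,\ldots,n-5\}$, so each vertex has $(n-6)/4$ bad chord neighbors. Summing over the $n$ vertices and dividing by $2$ to correct the double count bounds the number of bad chords in $G$ by $n(n-6)/8$, which together with the $n$ cycle edges aligns with the stated threshold $n\lfloor(n-6)/8\rfloor + n$. As soon as $|E|$ strictly exceeds this threshold, the number of chords must outstrip the maximum possible number of bad chords, so by pigeonhole $G$ contains at least one good chord, and Lemma~\ref{HamChord4k} or~\ref{oddCycle} supplies a neighborhood-prime labeling of $G$.

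The main obstacle is the careful per-vertex bad-chord count and its alignment with the floor in the stated bound, in particular the sub-case $n\equiv 2\pmod 8$, in which the diametric distance $d=n/2$ is itself a bad distance and its chord is its own opposite, so the double-counting correction needs a little extra care. Once this accounting is settled, the body of the proof is just a short pigeonhole estimate combined with the two earlier lemmas.
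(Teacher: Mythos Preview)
Your proposal follows essentially the same route as the paper: reduce to $n\equiv 2\pmod 4$ via Theorem~\ref{Hamiltonian}, observe via Lemmas~\ref{HamChord4k} and~\ref{oddCycle} that any chord creating a cycle of odd length or length divisible by~$4$ already forces an NPL, count the remaining ``bad'' chords (those with both induced cycle lengths congruent to $2\bmod 4$, equivalently those at cyclic distance $d\equiv 1\pmod 4$), and finish by pigeonhole. The paper's proof is exactly this argument, phrased as a contrapositive.

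One remark on the obstacle you flag. The diametric chord does not actually disturb the double-counting: each bad chord, diametric or not, is incident to exactly two vertices and hence is counted exactly twice in the sum $\sum_v(\text{bad chords at }v)$, so dividing by~$2$ is always correct and yields $n(n-6)/8$ bad chords (an integer whenever $n\equiv 2\pmod 4$). The genuine subtlety you have spotted is rather that for $n\equiv 2\pmod 8$ one has $n(n-6)/8 \neq n\lfloor (n-6)/8\rfloor$, so the edge bound the counting argument actually delivers is $n(n-6)/8+n$, which is $n/2$ larger than the threshold $n\lfloor(n-6)/8\rfloor+n$ written in the statement. The paper's own proof does not address this discrepancy either; it simply records the bound $n(n-6)/8+n$. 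So your plan matches the paper's argument exactly, and you are right that the alignment with the stated floor needs separate justification in the $n\equiv 2\pmod 8$ sub-case.
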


\begin{proof}
Let $G$ be a Hamiltonian graph of order $n$ that is not neighborhood-prime. By Theorem~\ref{Hamiltonian}, we may assume that $n\equiv 2\pmod{4}$. By Lemmas~\ref{HamChord4k} and \ref{oddCycle}, all chords on a Hamiltonian cycle in $G$ must form cycles of lengths congruent to $2$ modulo $4$. 
Let a Hamilton cycle in $G$ be $C=(v_1,v_2,\ldots,v_n)$. Then $v_1$ can be adjacent to $v_6,v_{10},v_{14},\ldots, v_{n-4}$. There are $\frac{n-6}{4}$ such chords in $C$ for each vertex in $V(G)$, but half of these chords will be duplicates. Hence there are at most $n\frac{n-6}{8}$ chords in $C$ in addition to the $n$ edges from $C$ itself, showing any more edges would form a graph that is neighborhood-prime. 
\end{proof}

The next result follows from \cite{dirac} and Theorem~\ref{Hamiltonian}.

\begin{corollary}
All graphs with minimum degree at least $n/2$ are neighborhood-prime. 
\end{corollary}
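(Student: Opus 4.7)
The plan is to combine Dirac's theorem with the Hamiltonicity-based results of Section~\ref{ham_cycle}. Dirac's theorem states that every simple graph on $n \geq 3$ vertices with minimum degree at least $n/2$ contains a Hamilton cycle; the cases $n \leq 2$ are vacuous since no vertex has degree greater than $1$, so I would focus on $n \geq 3$ and apply Dirac's theorem to conclude that $G$ is Hamiltonian.

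If $n \not\equiv 2 \pmod{4}$, Theorem~\ref{Hamiltonian} immediately delivers an NPL, so the only residue class requiring additional care is $n \equiv 2 \pmod{4}$. Here I would split on whether $G$ is bipartite. If $G$ contains an odd cycle, then Lemma~\ref{oddCycle} yields an NPL. Otherwise $G$ is bipartite with parts $A$ and $B$; each vertex in $A$ has all its neighbors in $B$, so $|B| \geq n/2$, and symmetrically $|A| \geq n/2$. Combined with $|A| + |B| = n$, this forces $|A| = |B| = n/2$ and each vertex to be joined to every vertex of the opposite part, so $G \cong K_{n/2, n/2}$.

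The remaining task is then to exhibit an explicit NPL for $K_{m,m}$ with $m = n/2$ odd (so $m \geq 3$). Assigning the labels $\{1, 2, \ldots, m\}$ to one part and $\{m+1, \ldots, 2m\}$ to the other works: every vertex's neighborhood is exactly one of the two parts, one part contains the label $1$, and the other contains the consecutive integers $m+1, m+2$, so in both cases the greatest common divisor of the labels in the neighborhood equals $1$.

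The main (mild) obstacle is that Dirac plus Theorem~\ref{Hamiltonian} alone do not settle the case $n \equiv 2 \pmod{4}$; however, the extremal nature of the degree hypothesis leaves essentially no room for a ``bad'' bipartite example, pinning $G$ down to $K_{n/2, n/2}$ in the one subcase not already covered by Lemma~\ref{oddCycle}, and that complete bipartite graph admits the direct labeling above.
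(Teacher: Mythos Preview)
Your argument is correct, but it follows a different route from the paper's. The paper's proof also begins with Dirac's theorem to obtain a Hamilton cycle, but then finishes in one stroke via Proposition~\ref{ham_max_edges}: the hypothesis $\delta(G)\geq n/2$ forces $|E(G)|\geq n^2/4$, which exceeds the bound $n\lfloor(n-6)/8\rfloor + n$, and that proposition (whose proof already absorbs Lemmas~\ref{HamChord4k} and~\ref{oddCycle}) immediately gives the NPL. There is no separate case analysis on the residue of $n$ or on bipartiteness.

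Your approach instead splits on $n\bmod 4$ and, in the case $n\equiv 2\pmod 4$, on whether $G$ contains an odd cycle; you invoke Lemma~\ref{oddCycle} directly in the non-bipartite case and dispatch the bipartite extremal case $K_{n/2,n/2}$ by an explicit labeling. This is a perfectly valid alternative and is arguably more self-contained, since it bypasses the edge-counting Proposition~\ref{ham_max_edges} entirely. The trade-off is length versus dependency: the paper's proof is a two-line reduction to an already-proved proposition, while yours is a short standalone argument that does not need that proposition at all.
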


\begin{proof}
Assume there is a graph $G$ that is not neighborhood-prime with minimum degree at least $n/2$, so by \cite{dirac}, $G$ contains a Hamilton cycle. By Proposition~\ref{ham_max_edges}, since $G$ contains at least $\frac{n^2}{4}$ edges, $G$ is neighborhood-prime.
\end{proof}

\noindent There are many more similar results that can be made which follow from decades worth of results focusing on Hamiltonian graphs. See \cite{hamsurvey} for an excellent survey of the subject. 

By Theorem~\ref{Hamiltonian}, we know that all Hamiltonian graphs besides those of order $2$ modulo $4$ are neighborhood-prime, and so the upcoming work in Section~\ref{small} and our earlier results give rise to the following conjecture. 

\begin{conjecture}\label{ham_min_edges}
A Hamiltonian graph with more than $n+\frac{n}{3}$ edges is neighborhood-prime.
\end{conjecture}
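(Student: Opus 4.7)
The plan is to combine Theorem~\ref{Hamiltonian} with the chord lemmas from Section~\ref{ham_cycle} and then push the density argument to the conjectured threshold. By Theorem~\ref{Hamiltonian}, the conjecture is immediate when $n \not\equiv 2 \pmod{4}$, so we may assume $n \equiv 2 \pmod{4}$, fix a Hamilton cycle $C = (v_1,\dots,v_n)$, and note that the hypothesis yields strictly more than $n/3$ chords relative to $C$. A chord $v_iv_j$ of cyclic distance $d := j-i$ (with $2 \le d \le n-2$) creates subcycles with $C$ of lengths $d+1$ and $n-d+1$, and a case check modulo $4$ using $n \equiv 2 \pmod{4}$ shows that Lemma~\ref{oddCycle} applies whenever $d$ is even and Lemma~\ref{HamChord4k} applies whenever $d \equiv 3 \pmod{4}$. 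Hence we are done unless every chord of $C$ is \emph{bad}, meaning $d \equiv 1 \pmod{4}$. In particular, in such a residual case $G$ must be bipartite with parity classes given by the parity of cycle index, since every edge of $G$ (Hamilton cycle edge or bad chord) joins vertices of opposite parity on $C$.

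To handle the residual case I would pursue two parallel approaches, either of which would suffice. The first is a P\'osa-style exchange: given two interlocking bad chords $v_iv_{i+d}$ and $v_{i+1}v_{i+d+1}$, reversing the cycle segment $v_{i+1},\dots,v_{i+d}$ yields a new Hamilton cycle $C'$ of $G$, and any chord of $C$ having exactly one endpoint in the reversed segment acquires a cyclic distance shifted by a quantity depending on $i$, $d$, and the endpoint. A careful modular calculation should show that for some well-chosen exchange at least one chord has distance $\not\equiv 1 \pmod{4}$ relative to $C'$, at which point Lemma~\ref{oddCycle} or Lemma~\ref{HamChord4k} finishes the proof via $C'$. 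The second approach is a direct NPL construction: viewing bad chords as the edges of a bipartite graph between odd- and even-indexed vertices of $C$, and combining them with the Hamilton cycle edges, we seek a set $S \subseteq V(G)$ with $|S| = n/2$ that is a total dominating set of $G$; assigning the odd integers in $[n]$ to $S$ makes every neighborhood contain an odd label, killing the factor of $2$ from any $\gcd$. The only remaining concern is $\gcd$ divisibility by $3$, since $5$, $7$, and higher primes have too few multiples in $[n]$ to divide every label of any neighborhood, and this can be avoided by local swaps among the label assignment.

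The main obstacle is calibrating the threshold to $n/3$ precisely. P\'osa-style rotations generally succeed once the chord density exceeds some absolute constant, but showing that $n/3$ suffices requires a refined averaging or double-counting argument, together with an identification of extremal near-counterexamples: Hamiltonian graphs with roughly $n/3$ bad chords that are neighborhood-prime under a labeling not captured by Lemmas~\ref{oddCycle} or \ref{HamChord4k} and not by the standard labeling of Equation~\eqref{cycle}. Small examples (such as $C_{10}$ together with several of its diameters) support the conjecture, but hint that a uniform argument may require case analysis on the multiset of cyclic distances of the chord set. It seems plausible that the first approach yields the sharp bound $n/3$ while the second yields a cleaner proof with a weaker constant, and that combining them will be necessary to achieve the conjectured bound exactly.
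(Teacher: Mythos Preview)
This statement is a \emph{conjecture} in the paper, not a theorem; the paper offers no proof and explicitly leaves it open, pointing only to the computational evidence from Section~\ref{small} and a candidate sharp example (Figure~\ref{cong_sharp_example}) of a non-neighborhood-prime Hamiltonian graph on $18$ vertices with exactly $n+n/3=24$ edges. There is therefore no paper argument to compare your proposal against.

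Your reduction to the case $n\equiv 2\pmod 4$ with every chord of cyclic distance $\equiv 1\pmod 4$ is correct and is essentially the same observation that underlies Proposition~\ref{ham_max_edges}. Beyond that, what you have written is a plan rather than a proof, as you acknowledge. Two concrete gaps stand out. First, the P\'osa-rotation idea is not carried through: you give no argument that more than $n/3$ bad chords must force an exchange producing a non-bad chord relative to the new Hamilton cycle, and since the paper's extremal example sits at exactly $n/3$ chords (all bad with respect to its Hamilton cycle), any proof must detect precisely this threshold. Second, in the total-domination approach your assertion that primes $p\geq 5$ ``have too few multiples in $[n]$ to divide every label of any neighborhood'' is false in general: a degree-$3$ vertex in a graph on $n=18$ vertices could have its three neighbors labeled $5$, $10$, $15$, giving $\gcd=5$. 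Eliminating the factor $2$ via a total dominating set labeled by odd integers does not reduce the remaining obstruction to $p=3$ alone; you would still need to control all small primes simultaneously, which is the actual difficulty.
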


If Conjecture~\ref{ham_min_edges} holds, then Figure~\ref{cong_sharp_example} would be a sharp example of a graph that is not neighborhood-prime with $n+n/3$ edges. A similar program to the one in Section~\ref{small} found at the first author's Github page\footnote{First author's Github page: \url{https://github.com/cecilartavion/neighborhood_prime_labeling}.} shows that it is not possible to place all of the even numbers from 1 to 18 inclusive on the vertices in Figure~\ref{cong_sharp_example} in such a way where the neighborhood of each vertex has at least one vertex that is not labeled. Thus there is no way to place an NPL on the vertices in Figure~\ref{cong_sharp_example} since there is at least one neighborhood that contains all even numbers. 

In our next result, we extend an observation (originally made in~\cite{PS2} on the maximum degree of $G$, denoted $\Delta(G)$. Notice that if $\Delta(G)=n-1$, then labeling a vertex $v$ with $\deg_G(v)=\Delta(G)$ by the label $1$ will ensure $G$ is neighborhood-prime. We can improve the degree requirement depending on the number of large prime numbers at our disposal. We use $\pi(x)$ for the number of prime numbers less than or equal to $x$.

\begin{figure}[htb]
\begin{center}
	\includegraphics[scale=1]{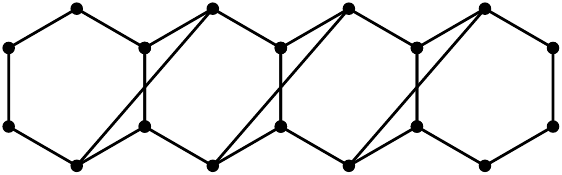}
\end{center}
\caption{Sharp example for Conjecture~\ref{ham_min_edges}}\label{cong_sharp_example}
\end{figure}

\begin{proposition}\label{large_degree}
Let $G$ be a graph of order $n\geq 6$.  If $\Delta(G)\geq n-\pi(n)+\pi(\lfloor n/2\rfloor)-1$, then $G$ is neighborhood-prime. 
\end{proposition}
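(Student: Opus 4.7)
The plan is to take a vertex $v$ with $\deg_G(v) = \Delta(G)$, set $f(v) = 1$, and exploit the set $P := \{p \text{ prime} : \lfloor n/2\rfloor < p \le n\}$ (so $|P| = \pi(n) - \pi(\lfloor n/2\rfloor)$), whose defining property is that each $p \in P$ has no multiple in $[n]$ other than itself. Consequently, whenever a vertex labeled $p \in P$ belongs to some neighborhood of size at least $2$, the gcd of that neighborhood's labels must divide both $p$ and some other label not divisible by $p$, forcing it to be $1$. Let $U = V(G) \setminus N[v]$ denote the non-neighbors of $v$, so $|U| = n - 1 - \Delta(G) \le |P|$ by hypothesis, and let $U' = \{u \in U : \deg(u) \ge 2\}$.

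With $v$ labeled $1$, every vertex in $N(v)$ of degree at least $2$ automatically satisfies the gcd condition (since $1$ lies in its neighborhood), and vertices in $U \setminus U'$ impose no condition at all. My task therefore reduces to enforcing the gcd condition at $v$ itself and at each $u \in U'$. The strategy is to produce a set $T \subseteq V(G) \setminus \{v\}$ with $|T| \le |P|$ that contains a neighbor of $v$ and a neighbor of every $u \in U'$; I then assign distinct primes from $P$ to the vertices of $T$ and distribute the remaining labels over the remaining vertices arbitrarily. Given such a $T$, the property above immediately yields gcd $=1$ at $v$ and at each $u \in U'$ (each has at least two labeled neighbors, one of which carries a prime in $P$).

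The chief obstacle is producing $T$ with $|T| \le |P|$. If $|U'| + 1 \le |P|$ this is easy: take one neighbor per element of $\{v\} \cup U'$. The delicate case is $|U'| + 1 > |P|$, which, combined with $|U'| \le |U| \le |P|$, forces $|U'| = |U| = |P|$ and demands covering $|P|+1$ elements with only $|P|$ primes. I will argue that in this regime some vertex $x \in V(G) \setminus \{v\}$ must be adjacent to at least two members of $\{v\} \cup U'$, so that placing a prime at $x$ simultaneously handles two of them. Indeed, if no such $x$ existed, then no $u \in U'$ could share a neighbor with $v$, giving $N(u) \subseteq U \setminus \{u\}$ for every $u \in U'$, and every $w \in U$ would have at most one neighbor in $U'$; double-counting then yields
\[
2|U'| \;\le\; \sum_{u \in U'} \deg(u) \;=\; \sum_{w \in U} |N(w) \cap U'| \;\le\; |U|,
\]
contradicting $|U'| = |U| \ge 1$ (Bertrand's postulate guarantees $|P| \ge 1$ for $n \ge 6$). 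Having located such an overlap vertex $x$, the remaining at most $|U'| - 1$ uncovered elements of $\{v\} \cup U'$ are each handled by placing one further prime on one of their neighbors, so $|T| \le 1 + (|U'|-1) = |P|$ as required.
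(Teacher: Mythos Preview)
Your proof is correct and shares the paper's core strategy: pick a vertex $v$ of maximum degree, set $f(v)=1$, and place the primes in $(\lfloor n/2\rfloor,n]$ on neighbors of the vertices outside $N[v]$ so that each such neighborhood picks up a label coprime to everything else. The technical execution differs in one respect. The paper runs through $u_1,\dots,u_k$ greedily, labeling an as-yet-unlabeled neighbor of each $u_i$ with the next available prime, and then secures the condition at $v$ \emph{separately} by placing the small labels $2$ and $3$ (or $2$ together with some already-placed $p_i$) on neighbors of $v$. This sidesteps any counting: there is never a shortage of primes, because $v$ is not competing for one. You instead treat $v$ symmetrically with the $u\in U'$, demanding a prime land in $N(v)$ as well; this forces you to confront the tight case $|U'|=|U|=|P|$ and to produce the double-counting argument showing two members of $\{v\}\cup U'$ must share a neighbor. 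Both routes work; the paper's is a little shorter, while yours gives a cleaner ``cover $\{v\}\cup U'$ by a set of size $\le |P|$'' formulation.
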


\begin{proof}
Let $v$ be a vertex in $G$ with degree $\Delta(G)$, and let $u_1,\ldots, u_k$ be the vertices not in the neighborhood of $v$, where by our assumption $\displaystyle k\leq \pi(n)-\pi(\lfloor n/2\rfloor)$.  Let $p_1,\ldots, p_k$ be the first $k$ prime numbers in which $\displaystyle n/2<p_i\leq n$ for each $i$.  We assign a labeling $f: V(G)\rightarrow [n]$ as follows.  We first assign $f(v)=1$.  Then as we consider $i$ from $1$ to $k$, if $u_i$ has a neighbor $w_i$ that is not yet labeled, we assign $f(w_i)=p_i$.  

At this point, as many as $k$ of the neighbors of $v$ have been labeled.  If none of the vertices $w_i$ are in $N(v)$, then based on our assumption for the degree of $v$, this vertex has at least two unlabeled vertices $v'$ and $v''$ which we label with $2$ and $3$.  In the case of only one $w_i\in N(v)$, label another neighbor of $v$, say $v'$, with $2$.  If there are at least two $w_i\in N(v)$, we do not need to specify the labels of any other neighbors of $v$. All remaining unlabeled vertices in $G$ can be labeled distinctly in any way with the remaining integers in $[n]$.

To show this labeling described above is neighborhood-prime, let $x\in V(G)$ with $|N(x)|\geq 2$.  We need to consider the neighborhood of $x$ in three cases: $x\in N(v)$, $x=u_i$ for $i\in [k]$, or $x=v$.  In the first case where $x\in N(v)$, since $f(v)=1$, we have $\gcd\{f(N(x))\}=1$.

If $x=u_i$, then the prime number $p_i\in f(N(x))$ from the labeling of $w_i$, or $p_j\in f(N(x))$ with $j<i$ from the labeling of a previous vertex $w_j$.  Since $p_i> n/2$ (or likewise $p_j> n/2$), it will be relatively prime with any other labels that are adjacent to $u_i$.  Thus, $\gcd\{f(N(u_i))\}=1$.
  
In the final case of $x=v$, the set of labels $f(N(x))$ contains either $\{2,3\}$, $\{2,p_i\}$, or $\{p_i,p_j\}$ for some $i,j\in [k]$, depending on the cases described in the labeling.  In each situation, these pairs are relatively prime, making $\gcd\{f(N(v))\}=1$ and proving our labeling to be an NPL.
\end{proof}

Though it may not be clear which classes of graphs can be shown to be neighborhood-prime using only Proposition~\ref{large_degree}, there are indeed examples of such graphs where Proposition~\ref{large_degree} is the only result in this paper or in previously published papers on this topic that can show this graph is neighborhood-prime. Figure~\ref{example_npl} is one such example of a graph on $100$ vertices for which Proposition~\ref{large_degree} proves the graph is neighborhood-prime.

\begin{figure}[htb]
\begin{center}
\includegraphics[scale=1.5]{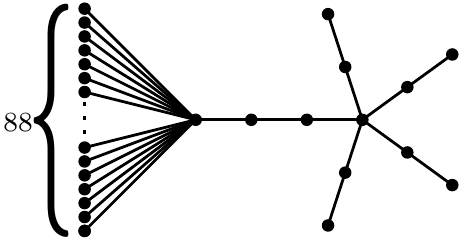}
\end{center}
\caption{Graph that is neighborhood-prime by Proposition~\ref{large_degree}}\label{example_npl}
\end{figure}

\section{NPLs For Graphs of Small Order}\label{small}

To justify some of our later conjectures, it is useful to classify as many graphs as possible with small order that are neighborhood-prime or not neighborhood-prime. We accomplish this task for all graphs with at most $10$ vertices by using Brenden McKay's repository of all graphs on at most $10$ vertices (not necessarily connected). 
The collection of these graphs can be found at Brenden McKay's website\footnote{Brenden McKay's webpage with graph data: \url{https://users.cecs.anu.edu.au/~bdm/data/graphs.html}.} or at the first author's Github page.\footnote{First author's Github page: \url{https://github.com/cecilartavion/neighborhood_prime_labeling}.}

In this section, we give an explanation of the python program used to prove Theorem~\ref{small_thm} before stating the main conclusion for this section. For the sake of completeness, the code for the program is provided in the appendix in Figure~\ref{code}. All comments have been removed from the code but can be found in the python file located at the first author's Github page.

Note that the files containing the graphs on 10 vertices was split using command-line input to ensure no graphs were lost in the process. This division of the file containing all graphs with exactly 10 vertices was necessary since not doing so would require more memory than a typical computer actually contains (more than 16Gb of RAM would be required). 
Currently, the program is set up to test all graphs with order between $2$ and $9$ vertices inclusive. To have the program run on the graphs with 10 vertices by making a quick change to file names being called, the program will check the graph with 10 vertices.

The program is fairly straightforward for readability purposes rather than computational speed. In the main module, the first for loop will go through each file, and build a set \texttt{g1} of the graphs to check. After \texttt{g1} is constructed, the second for loop will go through each graph and check each possible labeling to see if there is a labeling that would be an NPL. 
Note that the function \texttt{gen\_labelings(n)} will build all $n!$ possible labelings. The function \texttt{is\_np\_labeling} tests if the neighborhood of each vertex has $\gcd$ of $1$. If the program fails to find any NPL for a given graph, the program will print the number of vertices and edges, as well as the graph that is not neighborhood-prime. 
Since all possible labelings are checked, we can ensure that all the graphs found by the program are not neighborhood-prime and all other graphs on at most 10 vertices are neighborhood-prime. We summarize these findings in the following result and in Figure~\ref{non_np_graphs} in the appendix.

\begin{theorem}\label{small_thm}
The graphs in Figure~{\normalfont\ref{non_np_graphs}} are not neighborhood-prime, and all other graphs of order at most $10$ are neighborhood-prime.
\end{theorem}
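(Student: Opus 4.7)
The plan is to prove this by exhaustive computer search: for every graph of order at most $10$, we check directly whether some bijective labeling from $[n]$ to $V(G)$ satisfies the neighborhood-prime condition. Since there is no structural obstruction known that would let us rule out NPLs in general, the most reliable approach is simply to enumerate. The input data is Brendan McKay's database of all simple graphs on at most $10$ vertices, which we process one file at a time (one file per order $n$, with the $n=10$ file split into chunks to control memory usage).

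For each graph $G$ on $n$ vertices, I would iterate over all $n!$ permutations $f$ of $[n]$ and, for each, test the predicate: for every $v\in V(G)$ with $\deg(v)\geq 2$, we have $\gcd\{f(u):u\in N(v)\}=1$. The first permutation passing this test certifies that $G$ is neighborhood-prime, and we move on. If the loop terminates without finding one, then $G$ is not neighborhood-prime; the program records $G$, its number of vertices, and its number of edges for inclusion in Figure~\ref{non_np_graphs}. The correctness argument is immediate from the definition: a graph is neighborhood-prime if and only if some bijective labeling works, and we have tested them all.

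The main obstacle is computational rather than mathematical. For $n=10$ there are $11{,}716{,}571$ graphs in McKay's file and $10!=3{,}628{,}800$ candidate labelings each, so the raw search is large; loading the whole $n=10$ file into memory at once exceeds roughly $16$GB, which is why the file must be sharded on disk before the inner loops run. A secondary issue is early termination: because most graphs admit very many NPLs, looping over permutations in any reasonable order will almost always find a witness quickly, and only the rare non-neighborhood-prime graphs force the full $n!$ enumeration. I would therefore use the straightforward implementation given in Figure~\ref{code} (building all labelings via \texttt{gen\_labelings(n)} and testing each with \texttt{is\_np\_labeling}) and rely on the early-exit behavior plus file sharding to keep the computation tractable.

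Finally, to guard against implementation error I would sanity-check the program on small cases where the answer is already known from the literature: cycles $C_n$ with $n\equiv 2\pmod 4$ must appear among the non-neighborhood-prime graphs (and no others of order $\leq 6$ should), paths, complete graphs, wheels, and so forth should all be confirmed as NPL. Once the output matches these benchmarks, the list of exceptional graphs the program produces is exactly the content of Figure~\ref{non_np_graphs}, completing the proof of Theorem~\ref{small_thm}.
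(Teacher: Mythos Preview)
Your proposal is correct and matches the paper's own approach almost exactly: an exhaustive computer search over McKay's graph files, enumerating all $n!$ labelings per graph via \texttt{gen\_labelings} and testing each with \texttt{is\_np\_labeling}, with the $n=10$ file sharded to control memory. The only addition you make beyond the paper is the explicit sanity-check against known small cases, which is a reasonable safeguard but not part of the paper's stated argument.
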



\section{Random Graphs and Neighborhood-primality}\label{random}

For another perspective on neighborhood-prime labeling, we may ask whether a ``typical'' graph is neighborhood-prime.
One way to address this question is to sample the usual (Erd\H{o}s-R\'{e}nyi) random graph $G_{n,p}$ and find the probability that $G_{n,p}$ is neighborhood-prime.  
We use $G_{n,p}$ for the random graph on $n$ vertices in which edges (i.e. elements of $\binom{V}{2}$) are present independently each with probability $p$.
Before doing so, we will need the following which was shown by Bollob\'{a}s \cite{bollobas}, and Koml\'{o}s and Szemer\'{e}di \cite{KS}.
Recall that ``w.h.p.'' means with probability tending to one as $n$ tends to infinity.  Also, we say $f(n)=\omega (g(n))$ if $\lim_{n\rightarrow\infty} f(n)/g(n)=\infty$.

\begin{theorem}\label{thm:gnpham}
{\normalfont\cite{bollobas,KS}} If $p>(\ln n+\ln \ln n+g(n))/n$ where $g(n)=\omega (1)$, then $G_{n,p}$ is Hamiltonian w.h.p. 
\end{theorem}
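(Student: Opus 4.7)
The plan is to follow the classical P\'osa rotation-extension approach, combined with the observation that the Hamiltonicity threshold in $G_{n,p}$ coincides with the minimum-degree-two threshold. At this precise density, one must rule out the only obvious obstructions (vertices of degree at most $1$) and then exploit the expansion of $G_{n,p}$ to build a Hamilton cycle via rotations.

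First I would show that w.h.p.\ $\delta(G_{n,p}) \geq 2$. The expected number of vertices of degree at most $1$ is bounded by $n(1-p)^{n-1}(1 + (n-1)p)$. Substituting $np = \ln n + \ln\ln n + g(n)$ and using $(1-p)^{n-1} \leq e^{-p(n-1)}$, this expectation is of order $(\ln n + g(n))/(\ln n \cdot e^{g(n)})$, which tends to $0$ since $g(n) = \omega(1)$. Markov's inequality then gives $\delta(G_{n,p}) \geq 2$ w.h.p.

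Next I would invoke P\'osa's rotation-extension lemma: if $G$ is connected and every $S \subseteq V(G)$ with $|S| \leq n/4$ satisfies $|N(S) \setminus S| \geq 2|S|$, then for any longest path $P$ the set of endpoints reachable by iterated rotations of $P$ has size at least $n/4$, so there are $\Omega(n^2)$ \emph{boosters}, that is, edges whose addition to $G$ either strictly lengthens a longest path or creates a Hamilton cycle. To verify the expansion hypothesis in $G_{n,p}$, I would union-bound over $k \leq n/4$ and pairs $(S,T)$ with $|S| = k$, $|T| = 2k$ the probability that $N(S) \subseteq S \cup T$, which is at most $\sum_k \binom{n}{k}\binom{n}{2k}(1-p)^{k(n-3k)} = o(1)$ for $p$ in this range.

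Finally, I would close the gap via a two-round exposure (sprinkling): write $p = p_1 + p_2$ with $p_1 = (1-\epsilon)p$ and $p_2 = \epsilon p$. The first round is used to guarantee both $\delta \geq 2$ and the expansion property for $G_{n,p_1}$; the second round is used to add boosters iteratively, each addition either strictly increasing the length of the current longest path or producing a Hamilton cycle, so the process terminates after $O(n)$ steps. Since there are $\Omega(n^2)$ candidate boosters at every step while only $O(n)$ are needed in total, the $p_2$-round succeeds w.h.p. The main obstacle is making the booster step fully rigorous: the set of boosters depends on the current graph, so one must union-bound over all possible longest-path structures and their rotation closures, and couple this with the sprinkling estimate. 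Balancing these combinatorial factors against the tail bounds is precisely what forces the hypothesis $g(n) \to \infty$ rather than $g(n)$ merely bounded below, and is the technical heart of the theorem.
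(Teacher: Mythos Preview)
The paper does not prove this theorem at all: it is stated as a cited result from Bollob\'as~\cite{bollobas} and Koml\'os--Szemer\'edi~\cite{KS}, with no accompanying argument, and is then invoked as a black box in the proof of Theorem~\ref{thm:gnpnbhdprime}. So there is no ``paper's own proof'' to compare against.

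That said, your sketch is essentially the standard modern presentation (P\'osa rotation--extension plus two-round exposure), and the broad strokes are right. One genuine technical point deserves care: you propose splitting $p = p_1 + p_2$ with $p_1 = (1-\epsilon)p$ and claim the first round already gives $\delta(G_{n,p_1}) \geq 2$. At the sharp threshold this fails. If $g(n)$ grows slowly (say $g(n) = \ln\ln\ln n$), then $n p_1 = (1-\epsilon)(\ln n + \ln\ln n + g(n)) < \ln n$ for large $n$, which is well below the degree-two threshold, so $G_{n,p_1}$ will have isolated and degree-one vertices w.h.p. The usual remedy is to take $p_2 = \Theta(1/n)$ (so that $p_1$ still sits above the threshold and secures both $\delta \geq 2$ and expansion), and then argue that $\Theta(n)$ sprinkled edges still suffice to hit $\Omega(n^2)$ boosters at each of the $O(n)$ steps; alternatively one uses a coloring/exposure argument that conditions on the full degree sequence first. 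Either fix is routine, but as written your step ``the first round guarantees $\delta \geq 2$'' is not correct for all $g(n) = \omega(1)$.
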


The main takeaway from the following two theorems is simply that for $p>(\ln n+\ln \ln n+g(n))/n$, $\chi(G_{n,p})>2$ (where $\chi (G)$ is the chromatic number of $G$) w.h.p. and hence for such $p$, $G_{n,p}$ contains an odd cycle w.h.p.

\begin{theorem}\label{thm:chignpsmallp}
{\normalfont \cite{frieze}} If $p=\omega(1/n)$ and $p=o(1)$, then $\chi (G_{n,p})\geq 2 np/\ln (np)$ w.h.p. 
\end{theorem}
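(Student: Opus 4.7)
The plan is to combine the elementary bound $\chi(G)\geq |V(G)|/\alpha(G)$ with a first-moment argument controlling $\alpha(G_{n,p})$ from above. Concretely, for a fixed constant $c>2$, I would set $k := \lceil c\ln(np)/p\rceil$ and show that w.h.p.\ $G_{n,p}$ has no independent set of size $k$, which after dividing $n$ by $k$ yields a lower bound on $\chi(G_{n,p})$ of the desired form.

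Let $X_k$ count the independent sets of size $k$ in $G_{n,p}$. By linearity of expectation and standard estimates on binomial coefficients,
$$E[X_k] \;=\; \binom{n}{k}(1-p)^{\binom{k}{2}} \;\leq\; \left(\frac{en}{k}\right)^{k}\exp\!\Bigl(-\tfrac{pk(k-1)}{2}\Bigr),$$
so
$$\ln E[X_k] \;\leq\; k\Bigl(\ln(en/k) - \tfrac{p(k-1)}{2}\Bigr).$$
The hypotheses $p=\omega(1/n)$ and $p=o(1)$ imply $np\to\infty$ and $\ln(1/p)=o(\ln(np))$. With the choice $k=\lceil c\ln(np)/p\rceil$ one checks that $\ln(en/k) = \ln(np) - \ln\ln(np) + O(1)$, while $\tfrac{p(k-1)}{2} = (c/2)\ln(np) + o(\ln(np))$. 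Since $c/2>1$, the quantity in parentheses is negative of order $\Omega(\ln(np))$, so $\ln E[X_k]\to -\infty$. Markov's inequality then gives $\Pr(X_k\geq 1)\leq E[X_k]\to 0$, so $\alpha(G_{n,p})<k$ w.h.p., and hence $\chi(G_{n,p})\geq n/k$ w.h.p.

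The main (and really the only) obstacle is the asymptotic bookkeeping for $\ln E[X_k]$ across the wide range $\omega(1/n)\leq p\leq o(1)$: both $p$ and $k$ depend on $n$, and one must verify uniformly in $p$ that the $\ln(en/k)$ term is dominated by $p(k-1)/2$, absorbing the $\ln\ln(np)$ correction into the leading $\ln(np)$ factor. Beyond that, no concentration inequality or second-moment calculation is required—everything follows from the first moment together with $\chi \geq n/\alpha$, which also explains why the bound is only of the order $np/\ln(np)$ rather than being asymptotically tight.
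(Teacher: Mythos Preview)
The paper does not supply its own proof of this statement; it is quoted from the literature and invoked only to conclude that $\chi(G_{n,p})>2$ w.h.p., i.e., that $G_{n,p}$ contains an odd cycle. So there is no in-paper argument to compare your proposal against.

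Your first-moment approach is exactly the standard route to a lower bound of the correct order: bound $\alpha(G_{n,p})$ above by showing $E[X_k]\to 0$ for $k=\lceil c\ln(np)/p\rceil$, and then use $\chi\geq n/\alpha$. The estimates you sketch are correct and give, for any fixed $c>2$, w.h.p.\ $\alpha(G_{n,p})<c\ln(np)/p$ and hence $\chi(G_{n,p})>np/(c\ln(np))$.

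One point to flag concerns the constant. Pushing $c\downarrow 2$ your argument yields $\chi\geq(1-o(1))\,np/(2\ln(np))$, whereas the statement as printed claims $\chi\geq 2np/\ln(np)$, a factor of four larger. Since in this regime one in fact has $\chi(G_{n,p})\sim np/(2\ln(np))$, the printed constant appears to be a misprint (the $2$ should be in the denominator), and no argument can recover the literal inequality stated. For the paper's purposes this is immaterial: any lower bound tending to infinity already forces $\chi>2$, which is all that is used downstream.
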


\begin{theorem}\label{thm:chignpbigp}
{\normalfont \cite{bollobas1988chromatic}} If $p=\Omega(1)$ and $b=1/(1-p)$, then $\chi (G_{n,p})\geq n/2 \log_{b} (n)$ w.h.p. 
\end{theorem}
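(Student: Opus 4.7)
The plan is to derive the chromatic number lower bound from a concentration-of-independence-number result, using the elementary inequality $\chi(G) \geq |V(G)|/\alpha(G)$. Since $p = \Omega(1)$, $b = 1/(1-p)$ is a constant strictly greater than $1$, so it suffices to prove that $\alpha(G_{n,p}) \leq (2+o(1))\log_b n$ w.h.p. The inequality $\chi \geq n/\alpha$ will then immediately yield $\chi(G_{n,p}) \geq (1-o(1)) \cdot n/(2 \log_b n)$ w.h.p., matching the stated bound up to the standard $(1+o(1))$ asymptotic convention.

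I would attack the bound on $\alpha(G_{n,p})$ by the first moment method. For each integer $k$, let $X_k$ count independent sets of size exactly $k$ in $G_{n,p}$. Since a fixed $k$-subset is independent precisely when none of its $\binom{k}{2}$ potential edges appear, and edges appear independently,
$$\mathbb{E}[X_k] = \binom{n}{k}(1-p)^{\binom{k}{2}} = \binom{n}{k}\, b^{-\binom{k}{2}}.$$
For any fixed $\epsilon > 0$, set $k_\epsilon = \lceil (2+\epsilon)\log_b n \rceil$. Using $\binom{n}{k_\epsilon} \leq n^{k_\epsilon}/k_\epsilon!$ and taking logarithms to base $b$, the dominant contributions are $k_\epsilon \log_b n$ from the binomial coefficient and $-\binom{k_\epsilon}{2}$ from the edge probabilities, with a lower-order Stirling correction $-\log_b(k_\epsilon!) = O(\log n \cdot \log \log n)$. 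A direct computation gives
$$k_\epsilon \log_b n - \binom{k_\epsilon}{2} = -\frac{\epsilon(2+\epsilon)}{2}(\log_b n)^2 + O(\log_b n),$$
so $\log_b \mathbb{E}[X_{k_\epsilon}] \to -\infty$ and hence $\mathbb{E}[X_{k_\epsilon}] \to 0$. Markov's inequality then yields $\Pr[\alpha(G_{n,p}) \geq k_\epsilon] \leq \mathbb{E}[X_{k_\epsilon}] \to 0$, and since $\epsilon > 0$ was arbitrary, $\alpha(G_{n,p}) \leq (2+o(1))\log_b n$ w.h.p.

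The heart of the argument is the quadratic-in-$\log_b n$ cancellation between $k \log_b n$ and $\binom{k}{2}$, which pinpoints the critical threshold at exactly $k = 2 \log_b n$ and accounts for the constant $2$ in the denominator. The main obstacle is not conceptual—this is a textbook first-moment calculation—but rather the bookkeeping needed to ensure that the Stirling correction $\log_b(k_\epsilon!)$ and the rounding in $k_\epsilon$ remain dominated by the negative quadratic term. I would point out that matching this with an \emph{upper} bound on $\chi(G_{n,p})$, and hence establishing the full asymptotic $\chi(G_{n,p}) \sim n/(2\log_b n)$, is considerably deeper and requires sophisticated martingale concentration (as in Bollob\'as's original 1988 paper); only the lower bound is claimed in the statement above.
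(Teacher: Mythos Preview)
The paper does not prove this theorem; it is simply quoted from Bollob\'as~\cite{bollobas1988chromatic} as a black-box input to Theorem~\ref{thm:gnpnbhdprime}, so there is no in-paper proof to compare against. Your first-moment argument bounding $\alpha(G_{n,p})$ and then invoking $\chi(G)\geq n/\alpha(G)$ is the standard elementary route to the lower bound stated, and the calculation you outline is correct; note, incidentally, that the full Bollob\'as result you cite actually gives the asymptotic equality $\chi(G_{n,p})\sim n/(2\log_b n)$, whose upper-bound half is the hard part you correctly flag as requiring deeper tools, but only the lower bound is used here.
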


With these results in hand, we obtain the following theorem.

\begin{theorem}\label{thm:gnpnbhdprime}
If $p>(\ln (n) + \ln \ln n + g(n))/n$ where $g(n)=\omega (1)$, then $G_{n,p}$ is neighborhood-prime w.h.p.
\end{theorem}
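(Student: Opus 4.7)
The plan is to combine the Hamiltonicity threshold for $G_{n,p}$ with the existence of an odd cycle so that Lemma~\ref{oddCycle} applies directly, avoiding any need to separately treat residues of $n$ modulo $4$. Recall that Lemma~\ref{oddCycle} states that any Hamiltonian graph containing an odd cycle is neighborhood-prime, with no restriction on the order. So the whole proof reduces to showing that both events
\[
A_n=\{G_{n,p}\text{ is Hamiltonian}\}\quad\text{and}\quad B_n=\{G_{n,p}\text{ contains an odd cycle}\}
\]
hold with high probability, and then invoking the fact that $\mathbb{P}(A_n\cap B_n)\to 1$ whenever $\mathbb{P}(A_n),\mathbb{P}(B_n)\to 1$.

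For the first event, Theorem~\ref{thm:gnpham} directly gives that under our hypothesis on $p$, $A_n$ holds w.h.p. For the second, I would split into the two regimes covered by Theorems~\ref{thm:chignpsmallp} and~\ref{thm:chignpbigp}, noting that the hypothesis forces $np\geq \ln n+\ln\ln n+g(n)\to\infty$, so in particular $p=\omega(1/n)$. If $p=o(1)$, Theorem~\ref{thm:chignpsmallp} gives $\chi(G_{n,p})\geq 2np/\ln(np)$ w.h.p., and since $np\to\infty$ this lower bound also tends to infinity, so w.h.p. $\chi(G_{n,p})\geq 3$. If instead $p=\Omega(1)$, Theorem~\ref{thm:chignpbigp} gives $\chi(G_{n,p})\geq n/(2\log_b n)\to\infty$ w.h.p., which again forces $\chi(G_{n,p})\geq 3$. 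A graph with chromatic number at least $3$ is not bipartite and hence contains an odd cycle, so $B_n$ holds w.h.p.

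Putting the pieces together, w.h.p.\ $G_{n,p}$ is simultaneously Hamiltonian and contains an odd cycle, so by Lemma~\ref{oddCycle} it is neighborhood-prime w.h.p. There is no genuine obstacle here; the only thing that could go wrong is verifying that the chromatic number lower bounds in Theorems~\ref{thm:chignpsmallp} and~\ref{thm:chignpbigp} really exceed $2$ in the appropriate range of $p$, but as sketched above this is a routine bookkeeping check. The slightly subtle point worth emphasizing in the write-up is that one does not need to handle $n\equiv 2\pmod 4$ separately, because Lemma~\ref{oddCycle} delivers neighborhood-primality uniformly in $n$ once Hamiltonicity and the presence of an odd cycle are secured.
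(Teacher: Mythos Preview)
Your proposal is correct and follows essentially the same approach as the paper: combine Theorem~\ref{thm:gnpham} with the chromatic-number lower bounds (Theorems~\ref{thm:chignpsmallp} and~\ref{thm:chignpbigp}) to secure an odd cycle, then apply Lemma~\ref{oddCycle}. Your observation that Lemma~\ref{oddCycle} already covers all $n$ (so the $n\not\equiv 2\pmod 4$ case need not be peeled off via Theorem~\ref{Hamiltonian}) is a small but valid streamlining of the paper's argument.
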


\begin{proof}
Let $p>(\ln n+\ln \ln n+g(n))/n$ where $g(n)=\omega (1)$.  
If $n\not\equiv 2\pmod{4}$, then $G_{n,p}$ is neighborhood-prime w.h.p. by Theorems~\ref{Hamiltonian} and \ref{thm:gnpham}.  
Recall that a graph with chromatic number greater than $2$ is not bipartite and thus contains an odd cycle. 
If $n\equiv 2\pmod{4}$, then the fact that $G_{n,p}$ is neighborhood-prime w.h.p. follows directly from Theorems \ref{thm:gnpham}, \ref{thm:chignpsmallp}, and Lemma~\ref{oddCycle} if $p=o(1)$ and from Theorems \ref{thm:gnpham}, \ref{thm:chignpbigp}, and Lemma~\ref{oddCycle} if $p=\Omega(1)$.
\end{proof}

The reader may have noticed that the (rough) heuristic in this section and in Section~\ref{restrict} is that graphs with lots of edges {\em should} be neighborhood-prime and so may be unsurprised by the previous theorem.  
Given this heuristic, it may be more natural to consider sparser random graphs.  
As such, let us now focus on $G_{n,d}$, the random $d$-regular graph (see \cite{wormald} for a survey of results on random regular graphs) for small $d$.  
As before we will need a couple of previous results.

\begin{theorem}\label{thm:gndham}
{\normalfont \cite{wormald}} If $d\geq 3$ is fixed, then $G_{n,d}$ is Hamiltonian w.h.p. 
\end{theorem}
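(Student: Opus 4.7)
The plan is to outline the Robinson--Wormald proof as collected in Wormald's survey. The approach combines the configuration (pairing) model representation of $G_{n,d}$ with the second moment method, enhanced by the small subgraph conditioning technique needed to overcome the ``bad'' contribution from short cycles.

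First, I would switch to the pairing model: take $dn$ labelled points partitioned into $n$ buckets of size $d$, choose a uniformly random perfect matching on the points, and contract each bucket to a vertex. The resulting multigraph, conditioned on being simple, is distributed as $G_{n,d}$. Since the simplicity event has probability bounded away from $0$ as $n\to\infty$, it suffices to prove Hamiltonicity w.h.p. under the pairing distribution. Let $X$ denote the number of Hamilton cycles in this multigraph; a direct first moment calculation (sum over the $(n-1)!/2$ cyclic orderings of vertices and multiply by the probability each cycle is realized by the pairing) shows that $\mathbb{E}[X]$ grows exponentially for fixed $d\geq 3$.

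The naive second moment method fails because $\mathbb{E}[X^2]/\mathbb{E}[X]^2$ stays bounded away from $1$, with the excess driven by pairs of Hamilton cycles differing by a short cycle. The fix, and the technical heart of the argument, is small subgraph conditioning: condition on the vector $(Y_3,Y_4,\ldots,Y_k)$ of counts of cycles of length up to some large fixed $k$, use the classical fact that these counts converge jointly to independent Poissons with explicit means depending on $d$, and verify that after conditioning the second moment matches the square of the conditional first moment up to a factor of $1+o(1)$. This yields $X>0$ w.h.p., so the pairing multigraph is Hamiltonian w.h.p., and the simplicity contiguity then transfers the conclusion to $G_{n,d}$.

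The hard part will be the variance estimate, which requires careful asymptotic bookkeeping over pairs of Hamilton cycles in the pairing model, together with the joint Poisson convergence of the short-cycle counts. Fortunately, for the purposes of this paper the entire machinery is black-boxed by citing Wormald's survey, and only the stated conclusion is needed as an input to the neighborhood-prime arguments that will follow.
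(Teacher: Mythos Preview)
The paper provides no proof of this statement at all: it is quoted as an external result from Wormald's survey and used purely as a black box in the proof of Theorem~\ref{thm:gndnbhdprime}. Your outline of the Robinson--Wormald argument (pairing model, exponential growth of $\mathbb{E}[X]$, failure of the naive second moment, rescue via small subgraph conditioning on short-cycle counts) is an accurate high-level sketch of how the cited result is actually established in the literature, but it goes well beyond what the present paper does or needs. You yourself acknowledge this in your final paragraph, so there is no real discrepancy---just be aware that the ``paper's own proof'' here is simply the citation \cite{wormald}.
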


\begin{theorem}\label{thm:indgnd}
{\normalfont \cite{mckay}} If $d\geq 3$ is fixed, then $\alpha (G_{n,d})<0.46n$ w.h.p. where $\alpha(G)$ is the independence number.
\end{theorem}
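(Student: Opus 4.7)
The plan is to prove the bound via the first moment method applied through the configuration model of Bender--Canfield and Bollob\'as. Recall that the configuration model on $n$ vertices of degree $d$ equips each vertex with $d$ half-edges and takes a uniformly random perfect matching of the $nd$ half-edges; conditioning on simplicity of the resulting multigraph yields $G_{n,d}$, and for fixed $d$ this conditioning occurs with probability bounded away from $0$ (tending to $e^{-(d^2-1)/4}$). It therefore suffices to show that the expected number of independent $k$-sets in the configuration-model graph tends to $0$ for $k = \lceil 0.46\, n\rceil$.

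First I would fix a $k$-subset $S \subseteq V$ and observe that $S$ is independent in the configuration if and only if every half-edge incident to $S$ is matched to a half-edge incident to $V \setminus S$. A direct count of matchings gives
\[
p_k \;=\; \frac{\binom{(n-k)d}{kd}\,(kd)!\,\bigl((n-2k)d-1\bigr)!!}{(nd-1)!!},
\]
so by linearity $\mathbb{E}[X_k] = \binom{n}{k} p_k$, where $X_k$ counts independent $k$-sets. Applying Stirling's formula with $\alpha = k/n \to 0.46$, one obtains an explicit exponential rate
\[
\Phi_d(\alpha) \;=\; \lim_{n \to \infty} \tfrac{1}{n}\ln \mathbb{E}[X_k],
\]
which can be written in closed form in terms of $\alpha$, $d$, and the binary entropy $h(\alpha) = -\alpha\ln\alpha - (1-\alpha)\ln(1-\alpha)$. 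Markov's inequality then reduces the problem to verifying $\Phi_d(0.46) < 0$ for every integer $d \geq 3$.

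For this verification I would first establish a monotonicity lemma: for fixed $\alpha < 1/2$, $\Phi_d(\alpha)$ is strictly decreasing in $d$, since each extra half-edge per vertex gives more opportunities for an edge inside $S$ and so penalises large independent sets further. Consequently it suffices to evaluate $\Phi_3(0.46)$, and a direct numerical computation shows this is strictly negative (consistent with the known true threshold near $0.4554$, which leaves a comfortable margin below $0.46$). Combining this with the monotonicity lemma yields $\mathbb{E}[X_k] \to 0$ for every fixed $d \geq 3$, and undoing the simplicity conditioning gives $\alpha(G_{n,d}) < 0.46\,n$ w.h.p.

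The main obstacle is the careful Stirling expansion together with the numerical verification at $d = 3$: the ratio $0.46$ is close enough to the true independence threshold for cubic random graphs that one must evaluate the transcendental expression $\Phi_3(0.46)$ with some precision rather than relying on an obvious slack. For larger $d$ the rate becomes comfortably negative, so no genuinely new work is required beyond the monotonicity observation. A secondary but standard technical point is the multigraph-to-simple-graph conversion, which for fixed $d$ introduces only a multiplicative constant and so does not affect w.h.p.\ statements.
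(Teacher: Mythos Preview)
The paper does not prove this theorem at all; it is simply quoted from McKay's work \cite{mckay} and used as a black box in the proof of Theorem~\ref{thm:gndnbhdprime}. Your proposal sketches precisely the classical first-moment argument via the configuration model that underlies McKay's result, so in that sense you have correctly identified the route by which the cited theorem is established even though the present paper supplies no proof of its own.

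One small comment on your sketch: the monotonicity-in-$d$ reduction is sound, but you should be aware that the actual McKay bound for $d=3$ is approximately $0.4554n$, so the margin below $0.46$ is genuine but not large; the Stirling computation does need to be done carefully rather than waved through. Everything else in your outline---the configuration-model count of matchings avoiding edges inside $S$, the multigraph-to-simple-graph transfer via the asymptotic probability $e^{-(d^2-1)/4}$ of simplicity, and Markov's inequality---is standard and correct.
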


Together these give the following theorem for random regular graphs which is analogous to Theorem~\ref{thm:gnpnbhdprime}.

\begin{theorem}\label{thm:gndnbhdprime}
If $d\geq 3$ is fixed, then $G_{n,d}$ is neighborhood-prime w.h.p.
\end{theorem}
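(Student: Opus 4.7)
The plan is to mirror the structure of the proof of Theorem~\ref{thm:gnpnbhdprime}, splitting on the residue of $n$ modulo $4$. Since $d\geq 3$ is fixed, Theorem~\ref{thm:gndham} guarantees that $G_{n,d}$ is Hamiltonian w.h.p., so there is a Hamilton cycle to feed into our two base results (Theorem~\ref{Hamiltonian} and Lemma~\ref{oddCycle}).

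First, if $n\not\equiv 2\pmod{4}$, the conclusion is immediate: condition on the w.h.p.\ event that $G_{n,d}$ is Hamiltonian and apply Theorem~\ref{Hamiltonian} directly. So the only real content is the case $n\equiv 2\pmod{4}$, where we additionally need an odd cycle in order to invoke Lemma~\ref{oddCycle}.

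For the case $n\equiv 2\pmod{4}$, I would derive the existence of an odd cycle from Theorem~\ref{thm:indgnd} via chromatic number. The key observation is that for any graph $G$ on $n$ vertices, $\chi(G)\geq n/\alpha(G)$. Thus on the w.h.p.\ event that $\alpha(G_{n,d})<0.46n$, we get $\chi(G_{n,d})>n/(0.46n)=1/0.46>2$, so $G_{n,d}$ is not bipartite and therefore contains an odd cycle. Intersecting this w.h.p.\ event with the w.h.p.\ Hamiltonicity event from Theorem~\ref{thm:gndham} (an intersection of two w.h.p.\ events is itself a w.h.p.\ event), we may apply Lemma~\ref{oddCycle} to conclude that $G_{n,d}$ is neighborhood-prime w.h.p.

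There is no real obstacle here; the argument is a direct assembly of the cited results. The only tiny subtlety to be careful with is making sure the two w.h.p.\ events (Hamiltonicity and $\alpha<0.46n$) are combined correctly, which is immediate by a union bound on their complements, and that the simple inequality $\chi\geq n/\alpha$ is explicitly invoked so the reader sees why $\chi>2$ follows from the independence number bound. Combining the two cases on $n\bmod 4$ then gives the theorem.
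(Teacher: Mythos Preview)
Your proposal is correct and follows essentially the same approach as the paper: split on $n\bmod 4$, use Theorem~\ref{thm:gndham} with Theorem~\ref{Hamiltonian} for $n\not\equiv 2\pmod 4$, and for $n\equiv 2\pmod 4$ combine Theorem~\ref{thm:indgnd} with the inequality $\chi(G)\geq n/\alpha(G)$ to force an odd cycle and then apply Lemma~\ref{oddCycle}. Your remarks on the union bound and on $\chi\geq n/\alpha$ are exactly the justifications the paper invokes (the latter explicitly, the former implicitly).
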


\begin{proof}
Let $d\geq 3$ be fixed.  
If $n\not\equiv 2\pmod{4}$, then $G_{n,d}$ is neighborhood-prime w.h.p. by Theorems~\ref{Hamiltonian} and \ref{thm:gndham}.  
If $n\equiv 2\pmod{4}$, then the fact that $G_{n,d}$ is neighborhood-prime w.h.p. follows directly from Theorems \ref{thm:gndham}, \ref{thm:indgnd}, and Lemma~\ref{oddCycle} using that $\chi (G) \geq n/\alpha (G)$, which implies $\chi (G_{n,d}) \geq n/0.46n > 2$.
\end{proof}

It is worth pointing out that having a neighborhood-prime labeling is nearly an {\em increasing graph property}.  An increasing graph property is a graph property which is closed under addition of edges; that is, if $G$ satisfies a property $\mathcal{P}$ and $e\not\in G$, then $G + e$ satisfies $\mathcal{P}$ as well.  Having a neighborhood-prime labeling nearly satisfies closure under edge addition with edges on degree one vertices being the only exception.  That is, if $G$ is neighborhood-prime, $\{x,y\}\not\in G$, and $d(x),d(y)>1$, then $G +  \{x,y\}$ is neighborhood-prime.  In light of several of the results in this paper and the fact that the graphs in Figure~\ref{non_np_graphs} are not neighborhood-prime, it seems as if satisfying a (rather weak) local density condition may be enough to ensure that a graph is neighborhood-prime.  In addition, Cloys and Fox \cite{Fox} showed that all trees with no degree two vertices are neighborhood-prime. Thus we propose the following conjecture.

\begin{conjecture}
If $G$ has $\delta(G)\geq 3$, then $G$ is neighborhood-prime. 
\end{conjecture}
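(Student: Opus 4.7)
The plan is to split into cases based on structural properties of $G$ and leverage the Hamiltonicity-based framework of Section~\ref{ham_cycle} as aggressively as possible. If $G$ is Hamiltonian, then Theorem~\ref{Hamiltonian} immediately settles $n \not\equiv 2 \pmod{4}$; for $n \equiv 2 \pmod 4$, the condition $\delta(G) \geq 3$ forces at least $n/2$ chords relative to a fixed Hamilton cycle $C=(v_1,\ldots,v_n)$. A chord $v_iv_j$ partitions $C$ into two subcycles of lengths $j-i+1$ and $n-j+i+1$, which sum to $n+2 \equiv 0 \pmod 4$. I would analyze by parity of $j-i$: a chord with $j-i$ even produces odd subcycles (so Lemma~\ref{oddCycle} applies), a chord with $j-i \equiv 3 \pmod 4$ produces subcycles of length divisible by $4$ (so Lemma~\ref{HamChord4k} applies), and only chords with $j-i \equiv 1 \pmod 4$ are ``useless.'' The abundance of chords forced by $\delta(G) \geq 3$ should then imply that two useless chords always combine to force a short $4$-cycle (e.g.\ parallel chords $v_iv_{i+d}$ and $v_{i+1}v_{i+d+1}$ yield a $4$-cycle via two cycle edges), reactivating Lemma~\ref{HamChord4k}, or else an odd bridging cycle between them forces Lemma~\ref{oddCycle}.

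If $G$ is non-Hamiltonian with circumference $n-1$, Proposition~\ref{circumference} handles $n \not\equiv 3 \pmod 4$. For $n \equiv 3 \pmod 4$, I would adapt the construction from the final lemma of Section~\ref{apps}: label the $(n-1)$-cycle via Equation~\eqref{cycle} and use $\delta(G) \geq 3$ to position the omitted vertex $u$'s label so that it simultaneously repairs its own neighborhood and the unique failing cycle-neighborhood. For the remaining case of non-Hamiltonian $G$ with circumference strictly less than $n-1$, I propose a hybrid explicit/probabilistic labeling. Place the label $1$ on a vertex $v^*$ of large degree, which immediately satisfies every $u \in N(v^*)$ since $1 \in f(N(u))$; then for the remaining vertices, exploit $|N(v)| \geq 3$ via a random assignment of the remaining integers. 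Under such a random labeling, the probability that a fixed neighborhood of size $d \geq 3$ shares a common prime factor is at most $\sum_{p \text{ prime}} (1/p)^d \leq 1/8 + 1/27 + \cdots$, which is bounded strictly away from $1$; a Lov\'asz Local Lemma argument, perhaps refined via entropy compression or dependent random choice to handle dependencies among overlapping neighborhoods, should then suffice.

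The main obstacle lies in this final case: non-Hamiltonian graphs with $\delta(G)=3$ and short circumference---for example, Petersen-type cubic graphs and their relatives---afford little exploitable global structure. The naive Local Lemma can fail when $\Delta(G)$ is large, because the dependency degree among bad events scales with $\Delta^2$ while individual bad-event probabilities remain $\Omega(1)$, especially for the prime $p=2$. A successful proof will likely require a structural decomposition specific to low-circumference graphs with $\delta \geq 3$---possibly invoking Petersen-type $2$-factor theorems to pick a spanning union of cycles as a labeling backbone, then applying cycle-based techniques componentwise---together with a case analysis of sporadic small graphs informed by the computational census in Section~\ref{small}. Whether the conjecture admits a uniform argument, rather than a patchwork of structural and probabilistic methods, appears to be the principal conceptual question.
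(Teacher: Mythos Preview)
The statement you are attempting to prove is presented in the paper as a \emph{conjecture}, not a theorem: the paper offers no proof and explicitly leaves it open, motivating it only by the computational census of Section~\ref{small}, by Theorem~\ref{thm:gndnbhdprime} on random regular graphs, and by the Cloys--Fox result on trees without degree-$2$ vertices. There is therefore no paper proof against which to compare your proposal.

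Your write-up is candid that it is a programme rather than a proof, and indeed several steps do not go through as stated. In the Hamiltonian case with $n\equiv 2\pmod 4$, your plan to combine two ``useless'' chords $v_iv_{i+d}$ and $v_{i+1}v_{i+d+1}$ into a $4$-cycle and then invoke Lemma~\ref{HamChord4k} misreads that lemma: it requires a \emph{single} chord together with edges of $C$ to form a cycle of length $\equiv 0\pmod 4$, whereas your $4$-cycle uses two chords. More seriously, nothing in $\delta(G)\geq 3$ forces two parallel chords at distance one to exist; one can have $3$-regular Hamiltonian graphs on $n\equiv 2\pmod 4$ vertices in which every chord has endpoint-distance $\equiv 1\pmod 4$ along $C$ and no two chords are parallel in your sense, so this branch of the argument needs a genuinely new idea rather than a pigeonhole count.

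For the non-Hamiltonian, short-circumference case you yourself identify the obstruction: the Lov\'asz Local Lemma bound you sketch has bad-event probability bounded below by $\sum_p p^{-3}\approx 0.175$ while the dependency degree scales like $\Delta(G)^2$, so the LLL condition fails already for moderate $\Delta$. Invoking a Petersen $2$-factor is a reasonable instinct, but a $2$-factor may contain several cycles of length $\equiv 2\pmod 4$, and the paper's own Proposition on unions of cycles shows that such unions are typically \emph{not} neighborhood-prime---so labeling componentwise along a $2$-factor does not reduce to known results. In short, your outline correctly isolates where the difficulty lies but does not close any of the gaps; the statement remains, as the paper presents it, an open conjecture.
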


\section{Neighborhood Graphs and Trees}\label{sec:trees}

Given a graph $G$, we define the set of \textit{neighborhood graphs} of $G$, denoted $\mathcal{N}(G)$, as the set of all possible graphs $H$ such that $V(H)=V(G)$, and for each $v\in V(G)$ with $\deg(v)\geq 2$, there exists exactly one edge $uw\in E(H)$ where $u,w\in N(v)$.  Note that when every vertex in $G$ is degree 1 or 2, there is a unique neighborhood graph, whereas a vertex of larger degree results in multiple choices for which pair of vertices in its neighborhood is included in $E(H)$.

\begin{example}
{\rm 
Given a path $P_n$, $\mathcal{N}(P_n)$ consists solely of the disjoint union of paths $P_{n/2}\cup P_{n/2}$ if $n$ is even or $P_{(n+1)/2}\cup P_{(n-1)/2}$ if $n$ is odd.
Given a cycle $C_n$, $\mathcal{N}(C_n)$ contains only the graph $C_{n/2}\cup C_{n/2}$ if $n$ is even or simply $C_n$ if $n$ is odd.
}
\end{example}

A neighborhood graph of $G$ can be useful for developing an NPL on $G$ due to the following connection between neighborhood-prime and prime labelings.

\begin{theorem}\label{neighborhoodGraph}
For any graph $G$, if there exists an $H\in\mathcal{N}(G)$ with a prime labeling, then $G$ is neighborhood-prime.
\end{theorem}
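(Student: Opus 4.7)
The plan is essentially to transfer the prime labeling of $H$ directly onto $G$ and verify the neighborhood-prime condition vertex by vertex, exploiting the defining property of $\mathcal{N}(G)$. Since any $H \in \mathcal{N}(G)$ satisfies $V(H) = V(G)$, a bijection $f : V(H) \to [|V(H)|]$ is automatically a bijection $f : V(G) \to [|V(G)|]$, so there is nothing to adjust when moving the labeling from $H$ to $G$.

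I would then take an arbitrary vertex $v \in V(G)$ with $\deg_G(v) \geq 2$ and use the definition of a neighborhood graph to produce a distinguished edge $uw \in E(H)$ with both $u$ and $w$ belonging to $N_G(v)$. Because $f$ is a prime labeling of $H$, the endpoints of this edge satisfy $\gcd(f(u), f(w)) = 1$. Since $\{u, w\} \subseteq N_G(v)$, we have $\{f(u), f(w)\} \subseteq f(N_G(v))$, and the gcd of a superset divides the gcd of a subset, so
\[
\gcd\{f(x) : x \in N_G(v)\} \,\Big|\, \gcd(f(u), f(w)) = 1,
\]
forcing the left-hand side to equal $1$. Running this argument over every vertex of degree at least $2$ shows $f$ is an NPL of $G$.

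Because the argument is a direct unpacking of definitions, there is no genuine obstacle to overcome. The only place where one must be a little careful is the direction of the divisibility: one should note that enlarging the set of labels can only shrink (or preserve) the gcd, which is exactly what makes the single guaranteed $H$-edge inside $N_G(v)$ sufficient to certify primality of the whole neighborhood. I would make this observation explicit in the write-up so the short proof reads cleanly.
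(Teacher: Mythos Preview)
Your proposal is correct and follows essentially the same approach as the paper: transfer the prime labeling $f$ from $H$ to $G$ via the shared vertex set, and for each $v$ with $\deg_G(v)\ge 2$ use the guaranteed edge $uw\in E(H)$ with $u,w\in N_G(v)$ to conclude $\gcd\{f(N(v))\}=1$. The only difference is that you make the ``gcd of a superset divides gcd of a subset'' step explicit, which the paper leaves implicit.
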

\begin{proof}
Assume $f:V(H)\rightarrow [|V(H)|]$ is a prime labeling of $H$.  Let $v\in V(G)$ with $\deg{v}\geq 2$, and suppose $u,w$ are the neighbors of $v$ for which $uw$ is an edge in $H$.  Since $f$ is a prime labeling of $H$, we have $\gcd\{f(u),f(w)\}=1$, and thus $\gcd\{f(N(v))\}=1$ as well because $\{f(u),f(w)\}\subseteq f(N(v))$.
\end{proof}

\begin{theorem}\label{2regular}
Given a $2$-regular graph $G$, $H\in \mathcal{N}(G)$ has a prime labeling if and only if $G$ is neighborhood-prime.
\end{theorem}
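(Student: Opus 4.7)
The plan is to observe first that when $G$ is $2$-regular, the set $\mathcal{N}(G)$ contains a unique element $H$. Indeed, every $v \in V(G)$ satisfies $|N(v)| = 2$, so the pair $\{u,w\} = N(v)$ forces exactly one edge of $H$, leaving no choice. So the statement is really about the relationship between $G$ and this uniquely determined $H$ on the same vertex set.

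The forward direction ($H$ prime $\Rightarrow$ $G$ neighborhood-prime) is immediate from Theorem~\ref{neighborhoodGraph} and requires no further argument. For the reverse direction, I would take a neighborhood-prime labeling $f : V(G) \to [n]$ (where $n = |V(G)| = |V(H)|$) and show that the same function, viewed as a labeling of $V(H)$, is a prime labeling. Since $V(H) = V(G)$ and $f$ is already a bijection onto $[n]$, there is nothing to check about $f$ being a valid labeling of $H$.

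The core step is checking that adjacent vertices in $H$ have relatively prime labels. Let $uw \in E(H)$ be arbitrary. By the construction of the (unique) $H \in \mathcal{N}(G)$, this edge arises from some vertex $v \in V(G)$ with $N_G(v) = \{u, w\}$. Since $\deg(v) = 2 > 1$, the neighborhood-prime condition at $v$ gives $\gcd\{f(u), f(w)\} = \gcd f(N(v)) = 1$, which is exactly what is needed for $f$ to be a prime labeling of $H$.

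There really is no main obstacle here: once one notes that $\mathcal{N}(G)$ is a singleton when $G$ is $2$-regular and that every edge of that unique $H$ comes from some vertex's $2$-element neighborhood, the equivalence is a direct translation between the two labeling conditions on the common vertex set. The proof is essentially a one-line unpacking of the definitions in both directions.
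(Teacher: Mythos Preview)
Your proposal is correct and matches the paper's own proof essentially line for line: the forward direction is dispatched by Theorem~\ref{neighborhoodGraph}, and for the reverse you transfer an NPL $f$ of $G$ to $H$ by noting that every edge $uw\in E(H)$ arises as the two-element neighborhood $N_G(v)$ of some vertex $v$, whence $\gcd\{f(u),f(w)\}=1$. Your added remark that $\mathcal{N}(G)$ is a singleton when $G$ is $2$-regular is a helpful clarification but not a departure from the paper's argument.
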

\begin{proof}
The direction in which a prime labeling implies a neighborhood-prime labeling is proven by Theorem~\ref{neighborhoodGraph}.  For the other direction, assume $f: V(G)\rightarrow [|V(G)|]$ is an NPL of $G$.  The fact that $G$ is $2$-regular implies each vertex $v$ has a neighborhood $N(v)=\{u,w\}$ in which $\gcd\{f(u),f(w)\}=1$.  This results in the relatively prime condition being satisfied for each edge $uw\in E(H)$, making $f$ a prime labeling of $H$.
\end{proof}

The only $2$-regular graphs are cycles or unions of cycles, and cycles have been shown in~\cite{PS2} to be neighborhood-prime if $n\not\equiv 2\pmod{4}$.  Neighborhood-prime labelings for the disjoint union of two cycles were investigated by Patel and Shrimali in~\cite{PS3}.  They determined $C_n\cup C_m$ is neighborhood-prime if $m$ is odd and $n\equiv 0\pmod{4}$ or if $m,n\equiv 0\pmod{4}$.  Meanwhile, they further demonstrated the union of two disjoint cycles is not neighborhood-prime when $m$ and $n$ are both odd, $m$ is odd and $n\equiv 2\pmod{4}$, $m,n\equiv 2\pmod{4}$, or $m\equiv 2\pmod{4}$ and $n\equiv 0\pmod{4}$.
 Theorem~\ref{2regular} provides alternative explanations for these neighborhood-prime results since we can rely on the larger body of knowledge on prime labelings.  For example, Deretsky et al.\ showed in~\cite{DLM} that any union of cycles with at least two odd cycles is not prime, and for each union of cycles $C_n\cup C_m$ shown in~\cite{PS3} to not be neighborhood-prime, the neighborhood graph in $\mathcal{N}(C_n\cup C_m)$ includes the union of two or more odd cycles. 
On the other hand, the unions $C_{2k}\cup C_n$ is prime for all integers $k$ and $n$ as well as the union of three or four cycles with at most one odd cycle are prime.  Patel~\cite{Patel2} extended these results to introduce a prime labeling for $C_{2k}\cup C_{2k}\cup C_{2k} \cup C_{2m} \cup C_n$.  It was conjectured in~\cite{DLM} that all unions of cycles with at most one odd cycle are prime.  Since $\mathcal{N}(C_{4k})$ only includes the union of even cycles $C_{2k}\cup C_{2k}$ and $\mathcal{N}(C_{2k+1})$ consists of the same cycle $C_{2k+1}$, it follows that an affirmative result for the conjecture by Deretsky et al., combined with Theorem~\ref{2regular}, would imply the following is also true.

\begin{conjecture}
The union of cycles $C_{4k_1}\cup C_{4k_2}\cup\cdots \cup C_{4k_{\ell}}\cup C_n$ is neighborhood-prime if $n\equiv 0\pmod{4}$ or if $n$ is odd.
\end{conjecture}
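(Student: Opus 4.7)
The plan is to reduce the conjecture directly to the Deretsky--Lesniak--Moulton conjecture on prime labelings of unions of cycles via Theorem~\ref{2regular}. Since $G=C_{4k_1}\cup C_{4k_2}\cup\cdots\cup C_{4k_\ell}\cup C_n$ is $2$-regular, Theorem~\ref{2regular} tells us that $G$ is neighborhood-prime if and only if some $H\in\mathcal{N}(G)$ admits a prime labeling. So the first step is simply to compute $\mathcal{N}(G)$ explicitly.

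Recall from the examples before Theorem~\ref{neighborhoodGraph} that on any cycle the neighborhood graph is unique, with $\mathcal{N}(C_{2m})=\{C_m\cup C_m\}$ and $\mathcal{N}(C_{2m+1})=\{C_{2m+1}\}$. Since neighborhoods are taken componentwise, the unique element $H$ of $\mathcal{N}(G)$ is therefore
\begin{equation*}
H \;=\; \bigcup_{i=1}^{\ell}\bigl(C_{2k_i}\cup C_{2k_i}\bigr)\;\cup\; \mathcal{N}(C_n).
\end{equation*}
In the case $n\equiv 0\pmod 4$, write $n=4m$; then $\mathcal{N}(C_n)=C_{2m}\cup C_{2m}$ and $H$ is a disjoint union of even cycles (zero odd cycles). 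In the case $n$ odd, $\mathcal{N}(C_n)=C_n$ and $H$ is a disjoint union of even cycles together with a single odd cycle $C_n$. In either case, $H$ is a union of cycles containing \emph{at most one} odd cycle.

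The second (and entire) step is to invoke the conjecture of Deretsky, Lesniak, and Moulton stated in the paragraph preceding the target conjecture: every union of cycles with at most one odd cycle is prime. Assuming this holds, $H$ has a prime labeling $f\colon V(H)\to[|V(H)|]$; since $V(G)=V(H)$ and every vertex $v\in V(G)$ has its two neighbors joined by an edge of $H$, Theorem~\ref{neighborhoodGraph} (or equivalently Theorem~\ref{2regular} directly) promotes $f$ into an NPL of $G$, finishing the conjecture.

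The main obstacle is clearly that the Deretsky--Lesniak--Moulton conjecture is itself open, so an unconditional proof must either settle it or avoid it. A partial, unconditional approach I would attempt in parallel is to exploit the rich existing literature on prime labelings of unions of cycles: the cases $\ell=1$ (one or two factors of the form $C_{2k}\cup C_{2k}$ plus $C_n$) are within reach of the $C_{2k}\cup C_n$ result attributed to Deretsky et al., and Patel's extension to $C_{2k}\cup C_{2k}\cup C_{2k}\cup C_{2m}\cup C_n$ covers a further family. One could then try to build prime labelings of $H$ inductively by concatenating labelings of even-cycle unions in blocks of size $\leq 5$ using pre-existing labelings as building blocks, adjusting residues modulo small primes to preserve adjacency coprimality across components. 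The delicate point in such an induction will be controlling the labels modulo $2$ and $3$ simultaneously on the two consecutive edges straddling each $C_{2k_i}$--$C_{2k_i}$ pair, since those are precisely where the naive concatenations of the known labelings tend to fail; this is where I expect most of the real work to live.
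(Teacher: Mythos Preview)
Your proposal is correct and matches the paper's own reasoning exactly: the paper does not prove this statement but rather presents it as a conjecture, explaining (in the paragraph immediately preceding it) that since $\mathcal{N}(C_{4k})=C_{2k}\cup C_{2k}$ and $\mathcal{N}(C_{2k+1})=C_{2k+1}$, an affirmative answer to the Deretsky--Lesniak--Moulton conjecture combined with Theorem~\ref{2regular} would yield the result. Your computation of the neighborhood graph and reduction to the DLM conjecture is precisely this argument.
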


Very few classes of graphs have been shown to not be neighborhood-prime thus far in papers published on this particular graph labeling.  Combining results by Deretsky et al.\ with Theorem~\ref{2regular} once again, we can show quite a large class of unions of cycles do not have an NPL.

\begin{proposition}
The graph $C_{n_1}\cup \cdots \cup C_{n_k}$ is not neighborhood-prime if $n_i\equiv 2\pmod{4}$ for any $i\in[k]$ or if at least two of the cycles are of odd length.
\end{proposition}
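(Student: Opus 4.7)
The plan is to reduce the statement to the corresponding result for prime labelings via Theorem~\ref{2regular}, since $G=C_{n_1}\cup\cdots\cup C_{n_k}$ is $2$-regular and therefore has a \emph{unique} neighborhood graph $H\in\mathcal{N}(G)$. Once the structure of $H$ is identified, the claim will fall out of the known fact (due to Deretsky et al.\ \cite{DLM}) that a disjoint union of cycles containing at least two odd cycles is \emph{not} prime: combined with Theorem~\ref{2regular}, this immediately yields that $G$ is not neighborhood-prime whenever we can exhibit at least two odd cycles inside $H$.

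First I would compute $\mathcal{N}(C_n)$ explicitly. Labeling $C_n=(v_0,v_1,\ldots,v_{n-1})$, the neighborhood graph connects, for each vertex $v_i$, the two neighbors $v_{i-1}$ and $v_{i+1}$; equivalently, it adds an edge between every pair of vertices whose indices differ by $2$ modulo $n$. Thus if $n$ is odd, stepping by $2$ cyclically traverses all $n$ vertices, giving $\mathcal{N}(C_n)=C_n$ (a single odd cycle). If $n$ is even, the even-indexed and odd-indexed vertices form two separate orbits, giving $\mathcal{N}(C_n)=C_{n/2}\cup C_{n/2}$. In particular, when $n\equiv 2\pmod 4$, the integer $n/2$ is odd, so $\mathcal{N}(C_n)$ is a disjoint union of two odd cycles.

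Next I would assemble the neighborhood graph of the full union. Since the $C_{n_i}$ are vertex-disjoint, the (unique) neighborhood graph of $G$ is
\[
H \;=\; \mathcal{N}(C_{n_1})\,\cup\,\cdots\,\cup\,\mathcal{N}(C_{n_k}).
\]
Now I split into the two hypotheses. If some $n_i\equiv 2\pmod 4$, the preceding paragraph shows that $\mathcal{N}(C_{n_i})$ alone already contains two odd cycles, so $H$ does too. If instead two distinct indices $i\neq j$ satisfy $n_i, n_j$ odd, then $\mathcal{N}(C_{n_i})=C_{n_i}$ and $\mathcal{N}(C_{n_j})=C_{n_j}$ contribute one odd cycle each to $H$, so $H$ again contains at least two odd cycles.

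In either case, by Deretsky et al.\ \cite{DLM}, $H$ admits no prime labeling, and then Theorem~\ref{2regular} immediately gives that $G$ is not neighborhood-prime. The main point to get right is the structural description of $\mathcal{N}(C_n)$ as a function of $n\bmod 4$; once that is in hand, the rest is essentially bookkeeping, so I do not anticipate a real obstacle.
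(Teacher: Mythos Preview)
Your proposal is correct and follows essentially the same approach as the paper: identify the unique neighborhood graph of the $2$-regular union of cycles, observe that in either hypothesis it contains at least two odd cycles, invoke Deretsky et al.\ to conclude it is not prime, and then apply Theorem~\ref{2regular}. Your explicit computation of $\mathcal{N}(C_n)$ via stepping by $2$ modulo $n$ is a bit more detailed than the paper's, but the argument is the same.
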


\begin{proof}
Since the given union of cycles is $2$-regular, $G$ has a unique neighborhood graph which is also a union of cycles.  If a cycle $C_{n_i}$ satisfies $n_i\equiv 2\pmod{4}$, then it has $C_{n_i/2}\cup C_{n_i/2}$ as its neighborhood graph where $n_i/2$ is odd.  Likewise, the case of the union containing two odd cycles would result in them remaining odd cycles within the union that is the neighborhood graph of $G$.  In either case, $\mathcal{N}(G)$ is a union consisting of at least two odd cycles, which is not prime by Theorem~5 in~\cite{DLM}.  Thus, by Theorem~\ref{2regular}, $G$ is not neighborhood-prime.
\end{proof}

We now shift our focus to investigate neighborhood-prime labelings of trees.  Cloys and Fox~\cite{Fox} made an analogous conjecture to Entriger's tree primality conjecture that all trees are neighborhood-prime.  Despite proving NPLs exist for many classes of trees, such as caterpillars, spiders, and firecrackers, or degree $2$, vertices, they were unable to show an NPL exists for the class of trees known as \textit{lobsters}.  This tree consists of a path called the spine in which each vertex is within distance $2$ from the spine. 

We will make use of the fact that if $G$ is neighborhood-prime, then adding a single pendant vertex to a non-pendant vertex means the resulting graph is still neighborhood-prime. This is summarized in the following observation which is a consequence of repeated applications of a result by Cloys and Fox~\cite{Fox}.

\begin{obs}\label{pendantAdd} 
If $H$ is neighborhood-prime then the graph $G$ formed by the addition of any number of pendant vertices adjacent to vertices of degree greater than $2$ in $H$ is neighborhood-prime. 
\end{obs}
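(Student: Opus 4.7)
The plan is to exhibit a neighborhood-prime labeling of $G$ directly by extending an NPL $f$ of $H$: the new pendant vertices simply receive the fresh labels $\{|V(H)|+1, \ldots, |V(G)|\}$ in any order, while the original vertices keep their labels from $f$. Equivalently, as suggested by the phrase ``repeated applications,'' one can add the pendants one at a time and invoke the single-pendant observation from the start of Section~\ref{ham_cycle} (the Cloys--Fox result) at each step, but both routes amount to the same argument, so I would carry out the direct version.

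First, I would write down the extended labeling $f' \colon V(G) \to [|V(G)|]$: set $f'(v) = f(v)$ for $v \in V(H)$ and assign the remaining $k$ labels bijectively to the $k$ new pendant vertices in any order. This is immediately a bijection onto $[|V(G)|]$ because $|V(G)| = |V(H)| + k$.

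Next, I would verify the NPL condition at every vertex of $G$ with degree at least $2$, splitting into three cases. (i)~A newly added pendant has degree $1$ in $G$ and therefore requires no check. (ii)~An original vertex $v \in V(H)$ whose neighborhood was unaffected satisfies $N_G(v) = N_H(v)$, so $\gcd(f'(N_G(v))) = \gcd(f(N_H(v)))$ equals $1$ by the assumption on $f$. (iii)~An original vertex $v \in V(H)$ to which at least one pendant was attached satisfies $N_H(v) \subsetneq N_G(v)$. Only case (iii) carries any content, and even it is a one-line calculation: because $\deg_H(v) > 2$ we have $|N_H(v)| \geq 3 \geq 2$, so the NPL condition for $f$ on $H$ already yields $\gcd(f(N_H(v))) = 1$; since the gcd of a superset of integers divides the gcd of any subset, $\gcd(f'(N_G(v))) = 1$ as well.

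I do not expect a genuine obstacle here. The single point worth highlighting is the role of the hypothesis $\deg_H(v) > 2$: it guarantees that every vertex receiving new pendants already had at least two neighbors in $H$, so the gcd of its neighborhood labels was forced to be $1$ by the original NPL, and adjoining further labels from $\{|V(H)|+1,\ldots,|V(G)|\}$ cannot enlarge it. Without this hypothesis one would have to worry about the case $\deg_H(v) = 1$, in which $v$ was not subject to any gcd constraint in $H$ and the extension could, in principle, fail.
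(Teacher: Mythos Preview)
Your proposal is correct and matches the paper's treatment: the paper does not give a detailed argument but simply states that the observation is ``a consequence of repeated applications of a result by Cloys and Fox,'' namely the single-pendant fact you cite, and your direct extension of the NPL (keeping old labels and assigning the fresh labels $|V(H)|+1,\ldots,|V(G)|$ to the new leaves) is exactly what that iteration produces. Your case analysis is sound; the only (harmless) slack is that your argument in case~(iii) actually goes through whenever $\deg_H(v)\geq 2$, so the stated hypothesis $\deg_H(v)>2$ is a little stronger than what the proof needs.
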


The next result proves that all lobsters are neighborhood-prime if it is true that all caterpillars are prime.  A \textit{caterpillar} is a tree with all vertices being within distance 1 of a central path. This prime labeling result was claimed to be true in the graph labeling dynamic survey by Gallian~\cite{Gallian} before the $20^{\rm th}$ edition, and is cited in~\cite{FH} and~\cite{LWY} as being proven in a preprint by Acharya that remains unpublished.  
Partial results that work towards caterpillars being prime include Tout et al~\cite{TDH}, who proved caterpillars are prime if every spine vertex has the same degree or if the degree of each spine is at most $5$.  However, there appears to be no published result proving that all caterpillars are prime.  

\begin{theorem}
If all caterpillars are prime, then all lobster graphs are neighborhood-prime. 
\end{theorem}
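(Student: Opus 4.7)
The plan is to mirror Theorem~\ref{neighborhoodGraph}: given a lobster $L$, we will construct a caterpillar $C$ on the same vertex set $V(L)$ such that for every $v \in V(L)$ with $\deg_L(v) \geq 2$, some pair $u,w \in N_L(v)$ satisfies $uw \in E(C)$. Granting the assumption that every caterpillar is prime, $C$ then admits a prime labeling $f : V(C) \to [|V(L)|]$. For each such $v$, the pair $u,w$ gives $\gcd(f(u),f(w)) = 1$, forcing $\gcd\{f(N_L(v))\} = 1$; hence $f$ is an NPL of $L$.

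To construct $C$, let $s_1,\ldots,s_k$ be the spine of $L$ and assume first that $k \geq 2$. I would take the spine of $C$ to be the Hamilton path on $\{s_1,\ldots,s_k\}$ obtained by listing the odd-indexed vertices in increasing order followed by the even-indexed vertices in decreasing order (for instance $s_1,s_3,\ldots,s_{k-1},s_k,s_{k-2},\ldots,s_2$ when $k$ is even). Every internal spine vertex $s_i$ of $L$ then has its two spine-neighbors $s_{i-1}$ and $s_{i+1}$ joined by a consecutive edge of this path, settling its required-edge condition. Next, for each branch vertex $b$ attached to some $s_i$ and carrying at least one leaf, I would attach one chosen leaf of $b$ as a pendant of $s_i$ in $C$, producing the required edge at $b$. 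For an endpoint spine vertex $s_1$ (resp.\ $s_k$) of $L$-degree at least $2$, attach a chosen branch in $B_1$ (resp.\ $B_k$) as a pendant of $s_2$ (resp.\ $s_{k-1}$) in $C$, producing the required edge at the endpoint. Finally, place every remaining non-spine vertex of $L$ as a pendant of an arbitrary spine vertex of $C$. The case $k=1$ must be handled separately: when $|B_1| \geq 2$, take $\{s_1,b_1,b_2\}$ as the spine of $C$ for two chosen branches $b_1,b_2 \in B_1$, so that the edge $(b_1,b_2)$ settles the condition at $s_1$, and proceed as before for each branch with leaves; when $|B_1| \leq 1$, $L$ is itself a caterpillar and the conclusion is immediate.

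The main obstacle will be a careful case analysis confirming that every vertex of $L$ is placed in $C$ exactly once, that $C$ is indeed a caterpillar (every non-spine vertex is attached only to a spine vertex, so that removing the leaves of $C$ still leaves a path), and that the required-edge condition holds at every vertex of $L$ of degree at least two. The internal spine vertices are uniformly handled by the interleaved Hamilton path, but the endpoint spine vertices and the small-$k$ cases each demand their own verification. Once these details are in place, the NPL-from-prime reduction of the first paragraph completes the proof.
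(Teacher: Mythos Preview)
Your argument is correct and shares the paper's central idea---pairing $s_{i-1}$ with $s_{i+1}$ so that the odd-indexed and the even-indexed spine vertices each form a path---but packages it differently. The paper first invokes Observation~\ref{pendantAdd} to strip surplus pendants and pass to a reduced lobster, then takes as neighborhood graph the \emph{disjoint} union of two caterpillars (odd-spine and even-spine) together with isolated vertices; it therefore needs the extra remark that a forest of two caterpillars is prime (obtained by deleting a spine edge from a single prime caterpillar). You instead splice the two half-spines into a single Hamilton path on $\{s_1,\dots,s_k\}$ and hang every remaining vertex off it, producing one connected caterpillar on all of $V(L)$; this lets you apply the hypothesis ``caterpillars are prime'' verbatim, with no reduction step and no forest-of-caterpillars lemma. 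The paper's route keeps the neighborhood graph inside $\mathcal{N}(L)$ and reuses existing machinery, while yours is more self-contained but requires the extra endpoint and small-$k$ case checks you already flagged.
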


\begin{proof}
Suppose all caterpillar graphs are prime. Then all forests that are two disjoint caterpillar graphs are also prime since the deletion of an edge between two vertices in the spine of a caterpillar graph will form two disjoint caterpillar graphs.

By Observation~\ref{pendantAdd}, we need only consider lobsters whose non-spine vertices have degree $1$ or $2$. In fact, all of the vertices that are not in the spine but are adjacent to the spine must be degree 2, otherwise we can delete pendants from these vertices or from the spine using Observation~\ref{pendantAdd} without changing whether the graph is or is not neighborhood-prime; let $L$ be such a lobster graph. 

Let the spine of $L$ have $n$ vertices with degree $d_1,d_2,\ldots,d_n$.   We will partition these $n$ degrees into two sets depending on the parity of the indices.  Since the cases of $n$ being even or odd will follow analogously, we assume without loss of generality that $n$ is even, resulting in a partition of the set of degrees into $\{d_1,d_3,\ldots,d_{n-1}\}$
 and $\{d_2,d_4,\ldots,d_n\}$.  Consider two caterpillar graphs $D_1$ and $D_2$ with spines of length $\frac{n}{2}$ and $\frac{n}{2}$. 
Let $D_1$ have vertices with degrees $d_1,d_3,d_5,\ldots,d_{n-1}$ on the spine and $D_2$ have degrees $d_2,d_4,d_6,\ldots,d_n$ on the spine. By assumption, the disjoint union of $D_1$ and $D_2$ is prime, so place such a prime labeling on the vertices in $D_1$ and $D_2$. Since a forest of disjoint caterpillar graphs (with additional isolated vertices that can easily be labeled) is in $\mathcal{N}(L)$, by Theorem~\ref{neighborhoodGraph}, $L$ is neighborhood-prime.
\end{proof}

Since a published proof of all caterpillars being prime cannot be found, we must assume that such a result does not exist, and we add the following conjecture.

\begin{conjecture}
All lobsters are neighborhood-prime. 
\end{conjecture}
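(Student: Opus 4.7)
The plan is to attack this conjecture along two parallel lines, because the preceding theorem already reduces the statement to primality of caterpillars, and the route via caterpillars is exactly the open piece that has been missing from the literature for decades. My approach is therefore to attempt a direct construction first, reserving the caterpillar route as a fallback in case the direct route stalls on a case analysis.

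For the direct approach I would begin by invoking Observation~\ref{pendantAdd} to reduce to a lobster $L$ in which every non-spine vertex adjacent to the spine has degree exactly $2$. Write the spine as $s_1,\ldots,s_n$, and for each $s_i$ let $t_{i,1},\ldots,t_{i,c_i}$ be its secondary neighbors with associated pendants $p_{i,1},\ldots,p_{i,c_i}$. The NPL requirements decouple into local conditions $\gcd(f(s_i),f(p_{i,j}))=1$ at each secondary, and a global condition $\gcd\{f(N(s_i))\}=1$ at each spine vertex. My first step is to label the spine and secondaries in a shifted block pattern reminiscent of Equation~\eqref{cycle}, placing consecutive integers alternately on $s_i$ and on one of its secondaries $t_{i,j_0}$ so that each $N(s_i)$ automatically contains two consecutive integers and hence has gcd $1$; the remaining secondaries and all pendants would be labeled afterward from the unused integers.

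The main obstacle is ensuring that the pendant labels can still be matched to their secondaries while respecting $\gcd(f(s_i),f(p_{i,j}))=1$. When $f(s_i)$ is divisible by a small prime $q$ and almost all of the remaining integers are multiples of $q$, local feasibility can fail for a spine vertex with large $c_i$. To surmount this, I would set up a bipartite graph between unlabeled integers and secondary slots, with an edge whenever the integer is coprime to the relevant $f(s_i)$, and prove a Hall-type matching condition using $\pi(n)$ and $\pi(n/2)$ estimates as in Proposition~\ref{large_degree}; the subtle case is when $n$ is small relative to $\max_i c_i$, since then the supply of coprime integers is tight.

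If the direct route runs into an obstruction, the fallback is to prove primality of caterpillars in enough generality to apply the preceding theorem, which immediately yields the conjecture. Here the hard part is exactly the long-standing Acharya-style statement for caterpillars of arbitrary varying spine degree; partial results of Tout, Dabboucy, and Howalla handle uniform or small spine degrees, and I would try to interpolate between these using a greedy assignment of primes $p$ with $n/2 < p \leq n$ to the highest-degree spine vertices and a CRT-based filling of the remaining labels. I expect this fallback to be the harder of the two paths, and the fact that the authors phrased the lobster statement as a conjecture rather than a theorem reflects precisely that neither path has yet been pushed through in full generality.
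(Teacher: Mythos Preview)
The paper does not prove this statement; it is explicitly stated as a conjecture precisely because the preceding theorem makes it conditional on the primality of all caterpillars, which the authors note has no published proof. There is therefore no proof in the paper to compare your proposal against.

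Your proposal is not a proof either, and you are candid about this. Both routes you outline have genuine, unresolved gaps. In the direct approach, the Hall-type matching between leftover labels and pendant slots is the right framework, but the obstruction you flag is real and not merely ``subtle'': if a single spine vertex $s_i$ carries $c_i$ secondaries with $c_i$ comparable to $n$, and $f(s_i)$ is forced (by your block pattern on the spine) to be an even integer with several small prime factors, then the set of labels coprime to $f(s_i)$ can have density well below what Hall's condition requires. Prime-counting estimates of the type used in Proposition~\ref{large_degree} give only $\pi(n)-\pi(n/2)$ guaranteed coprime labels, which is $o(n)$ and cannot cover a linear-sized $c_i$. You would need control over \emph{which} spine labels land on high-degree $s_i$, and your block scheme does not provide that flexibility.

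The fallback route is exactly the long-open caterpillar primality problem, as you correctly identify. Your sketch of assigning large primes to high-degree spine vertices and filling via CRT is the standard heuristic, but no one has made it work in full generality, and nothing in your outline advances beyond the known partial results of Tout, Dabboucy, and Howalla. In short, your proposal accurately maps the terrain and the obstacles, but it does not close either gap, and the paper's authors left the statement as a conjecture for the same reasons.
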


We continue this investigation of lobsters by considering ones with particular structures or restrictions.  
In Proposition~\ref{lobster_result}, we will make use of a number theory result by Pomerance and Selfridge~\cite{PS}.

\begin{theorem}\label{selfridge}
{\normalfont\cite{PS}} If $N$ is a natural number and $I$ is an interval of $N$ consecutive integers, then there is a one-to-one correspondence $f:[N]\to I$ such that $\gcd\{i,f(i)\}=1$ for $1\leq i\leq N$.
\end{theorem}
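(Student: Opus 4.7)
The plan is to recast the statement as a bipartite matching problem and invoke Hall's marriage theorem. Write $I=\{a+1, a+2, \ldots, a+N\}$ for some nonnegative integer $a$, and build the bipartite graph $\Gamma$ with parts $[N]$ and $I$, joining $i\in [N]$ to $j\in I$ precisely when $\gcd(i,j)=1$. A bijection $f:[N]\to I$ with $\gcd(i,f(i))=1$ for every $i$ is exactly a perfect matching in $\Gamma$, so it suffices to produce such a matching.

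By Hall's theorem, a perfect matching exists if and only if for every $S\subseteq [N]$ the neighborhood $N_{\Gamma}(S)=\{j\in I : \gcd(i,j)=1 \text{ for some } i\in S\}$ has size at least $|S|$. Taking complements in $I$, this amounts to showing that for each $S\subseteq [N]$,
\[
|T(S)|\leq N-|S|, \qquad T(S):=\{j\in I : \gcd(i,j)>1 \text{ for every } i\in S\}.
\]

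The main obstacle is this number-theoretic inequality: the set of integers in an interval of length $N$ sharing a prime factor with \emph{every} element of a given $S\subseteq [N]$ must be bounded by $N-|S|$. My strategy would be to build an injection $\psi:T(S)\to [N]\setminus S$. For each $j\in T(S)$ pick a canonical prime $p_j$ dividing both $j$ and some $i\in S$ (say the smallest such prime, with ties broken by the smallest corresponding $i$), and then define $\psi(j)$ as a multiple of $p_j$ in $[N]$, exploiting the fact that multiples of $p_j$ are evenly spaced in any interval of length $N$. The delicate part is proving injectivity and ensuring the image misses $S$; one must track how many distinct $j$'s could share the same prime $p_j$ and allocate the multiples of each prime in $[N]$ carefully, handling separately the small primes (which divide many elements of $[N]$ and $I$ and thus risk collisions) and the large primes (which have only one or two multiples in $I$ and are easy to control). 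This combinatorial/number-theoretic bookkeeping is essentially the content of Pomerance and Selfridge's original argument.

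Once the inequality is established for every $S\subseteq [N]$, Hall's condition is verified, $\Gamma$ admits a perfect matching, and reading off this matching yields the desired coprime bijection $f$.
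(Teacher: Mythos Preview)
The paper does not prove this theorem at all: it is quoted verbatim from Pomerance and Selfridge~\cite{PS} and used as a black box in the proof of Proposition~\ref{lobster_result}. So there is no ``paper's own proof'' to compare against.

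That said, your outline is faithful to the actual Pomerance--Selfridge argument: they too set up the bipartite coprimality graph and reduce the existence of the bijection to Hall's condition. Where your proposal falls short of being a proof is exactly where you say it does---the verification that $|T(S)|\le N-|S|$ for every $S\subseteq[N]$ is the entire substance of their paper, and your suggested injection $\psi:T(S)\to[N]\setminus S$ via ``canonical primes'' is too vague to carry it. In particular, the sketch gives no mechanism guaranteeing that distinct $j,j'\in T(S)$ with $p_j=p_{j'}$ land on distinct multiples of that prime in $[N]\setminus S$, nor does it explain why the chosen multiple avoids $S$. Pomerance and Selfridge handle this not by an explicit injection but by a counting argument that bounds $|T(S)|$ in terms of the prime structure of $S$, and the details are genuinely delicate. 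So what you have written is a correct framing of the problem, but not a proof; since the present paper only cites the result, that is all that is needed here.
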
 

\begin{proposition}\label{lobster_result}
Let $L$ be a lobster where $v_1,v_2,\ldots,v_n$ are the vertices on the spine of $L$ with degree $d_1,d_2,\ldots,d_n$, respectively. Let $d_{n+1},d_{n+2},\ldots,d_{n+k}$ be the degrees of the non-pendant vertices that are not on the spine but are adjacent to a vertex on the spine. Let $d_1',d_2',\ldots,d_n'$ be the number of non-pendant vertices adjacent to $v_1,v_2,\ldots,v_n$, respectively, and let $d'=\max\{d_1',d_2',\ldots,d_n'\}$. 
If 
\[
\sum_{i=1}^{n+k}(d_i-2)+2 \geq \sum_{i=1}^n (d'-d_i'),
\]
then $L$ is neighborhood-prime. 
\end{proposition}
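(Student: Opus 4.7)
The plan is to apply Theorem~\ref{neighborhoodGraph} by exhibiting some $H\in\mathcal{N}(L)$ that admits a prime labeling, and to supply the latter via the Pomerance--Selfridge result (Theorem~\ref{selfridge}).

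First, reinterpret the hypothesis. For any tree with non-pendant vertex set $I$ and pendant set $\Pi$, the handshake identity $\sum_v\deg v=2(|V|-1)$ forces $|\Pi|=\sum_{v\in I}(\deg v-2)+2$. Applied to $L$ this shows that $P:=\sum_{i=1}^{n+k}(d_i-2)+2$ equals exactly the total pendant count of $L$, so the hypothesis reads $P\geq\sum_{i=1}^n(d'-d_i')$: there are enough pendants in $L$ to cover, one per unit of deficit, the total shortfall of spine vertices whose non-pendant neighbor count falls below the common target $d'$. Observation~\ref{pendantAdd} also allows us to assume (up to adding back pendants at the end) that each non-pendant leg has only as many pendant children as we need for the construction below.

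Next, build $H\in\mathcal{N}(L)$. For each non-pendant leg $u_j$ adjacent to some spine vertex $v_{i(j)}$, designate one pendant child $q_j$ of $u_j$ and include the edge $\{v_{i(j)},q_j\}$ in $H$; this is the edge contributed by $u_j$ to $\mathcal{N}(L)$. For each spine vertex $v_i$, include an edge between two of its neighbors chosen as follows: when $d_i'\geq 2$, join two non-pendant neighbors of $v_i$ (typically its two spine neighbors); when $d_i'<2$, join the unique non-pendant neighbor of $v_i$ with a pendant ``borrowed'' from a leg $u_j$ with surplus pendants. The inequality $P\geq\sum_i(d'-d_i')$ is exactly the global budget that makes this borrowing feasible. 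Engineer the choices so that $H$ is a disjoint union of paths (a class known to be prime), i.e., verify that each such choice extends a path rather than closing one and that no vertex of $H$ accumulates degree larger than $2$. Then use Theorem~\ref{selfridge} to relabel $V(L)=V(H)$ with $[|V(L)|]$ along each path component so that every edge of $H$ has coprime endpoints, giving a prime labeling of $H$. Theorem~\ref{neighborhoodGraph} then delivers an NPL on $L$.

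The main obstacle is the middle step: engineering $H$ so that it is a disjoint union of paths (or at least a forest with a prime labeling) while respecting the constraints of $\mathcal{N}(L)$. A careful case analysis must treat spine endpoints, spine vertices with $d_i'=d'$ versus $d_i'<d'$, and non-pendant legs with one versus many pendant children, and it must route each borrowed pendant so that no cycle forms and no vertex of $H$ picks up degree three. This is precisely where the hypothesis is used in its tightest form: a weaker bound than $P\geq\sum_i(d'-d_i')$ would leave at least one spine vertex unable to find a coprime-providing neighbor, so that the resulting $H$ would no longer be prime-labelable by the path construction.
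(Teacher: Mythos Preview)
Your route through Theorem~\ref{neighborhoodGraph} is not the paper's argument, and the specific construction you propose for $H$ cannot be made to work as stated. Once you include, for every non-pendant leg $u_j$ attached to $v_i$, the forced edge $\{v_i,q_j\}\in E(H)$, the spine vertex $v_i$ already has degree in $H$ equal to the number of non-pendant legs hanging from it; adding the further edge you select for $N(v_i)$ (e.g.\ $v_{i-1}v_{i+1}$) only makes things worse, since $v_i$ also picks up the edges $v_{i-2}v_i$ and $v_iv_{i+2}$ contributed by its spine neighbors. So a spine vertex with several non-pendant legs necessarily has degree $\geq 3$ in \emph{every} $H\in\mathcal{N}(L)$, and your ``disjoint union of paths'' target is unattainable in general. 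What you actually get is a pair of caterpillars (on the even- and odd-indexed spine vertices) together with isolated vertices, and a prime labeling of an arbitrary caterpillar is exactly the open problem the paper is trying to sidestep. Your invocation of Theorem~\ref{selfridge} at the end is also misplaced: Pomerance--Selfridge gives a coprime bijection between $[N]$ and an interval of $N$ consecutive integers, not a prime labeling of a path.

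The paper's proof does \emph{not} go through $\mathcal{N}(L)$ at all. It labels the spine directly with $[n]$ in the usual alternating fashion (so every interior $v_i$ already has consecutively-labeled spine neighbors), and then handles the legs by applying Theorem~\ref{selfridge} iteratively, $d'$ times in all: in round $t$ one takes the coprime bijection $g_t:[n]\to\{tn+1,\dots,(t+1)n\}$ and, for each spine vertex $v_i$ that still has an unhandled non-pendant leg $u_j$, places the label $g_t(f(v_i))$ on one pendant child of $u_j$, so that $N(u_j)$ now contains the coprime pair $\{f(v_i),g_t(f(v_i))\}$. Whenever $v_i$ has already exhausted its non-pendant legs, the label $g_t(f(v_i))$ is dumped on a surplus vertex instead; the total number of such dumps over all $d'$ rounds is precisely $\sum_i(d'-d_i')$, and the hypothesis says the surplus set $S$ (whose size you correctly identified as $\sum_{i=1}^{n+k}(d_i-2)+2$) is large enough to absorb them. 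After the $d'$ rounds, every remaining unlabeled vertex is a pendant whose label is irrelevant, and one finishes arbitrarily.
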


\begin{proof}
We will begin defining an NPL of $L$ by labeling the spine as has been typically done (consecutively using every other vertex and similar to the labeling in Equation~\eqref{cycle}) so that $\{f(v_1),\ldots,f(v_n)\}=[n]$ and the greatest common divisor of the labeling on the neighborhood of $v_i$ is $1$ for all $i\in \{ 2, \dots , n-1 \}$ (and recall that $d(v_1)=d(v_n)=1$).
Let $u_{1},u_{2},\ldots,u_{k}$ be the vertices described above with degrees $d_{n+1},d_{n+2},\ldots,d_{n+k}$ respectively. 
Let $S$ be the set of vertices adjacent to the spine or, for each $j\in [k]$, the set of vertices that includes all but one of the pendant vertices to $u_j$. Note that if we can label exactly one of the pendant vertices to $u_j$ so that the greatest common divisor of the label on the one pendant vertex and the spine is $1$ by Theorem~\ref{selfridge} (as described below), we can put any label on the remaining pendant vertices to $u_j$. Also, note that 
\[
|S|=\sum_{i=1}^n (d_i-2)+2+\sum_{i=n+1}^{n+k} (d_i-2) = \sum_{i=1}^{n-k} (d_i-2)+2.
\]
Vertices in $S$ are {\em surplus} vertices in the sense that by carefully labeling $V(L)\setminus S$ we will already have a neighborhood-prime labeling of $L$ and so vertices in $S$ may receive any of the remaining labels.
Using Theorem~\ref{selfridge}, there is a function $g:[n]\to \{n+1,\ldots,2n\}$ such that for each $i\in [n]$, if $v_i$ is adjacent to a $u_j$ with $\deg(u_j)\geq 2$, we can label exactly one pendant vertex of $u_j$ with $g(i)$. 
If there does not exist a $u_j$ for $v_i$, then we will label one of the vertices in $S$ with this label. Apply Theorem~\ref{selfridge} a total of $d'$ times. Since we label $d'-d_i'$ vertices in $S$ for each $v_i$ and $|S|\geq \sum_{i=1}^n d'-d_i'$, it is clear that we can label vertices in $L$ so that $\gcd\{f(N(u_j))\}=1$ for all $u_j\in \{u_1,\ldots,u_k\}$. 
The remaining vertices can be labeled in any manner, resulting in $f$ being an NPL. 
\end{proof}

One particular type of lobsters to consider is the \textit{reduced lobster}, which is a lobster with no pendant vertices adjacent to the spine and each non-spine neighbor of a vertex in the spine is degree $2$.  We will also consider our lobsters from this point to have pendants at the ends of the spine, else we could extend our spine to include more vertices.  In order to show this set of lobster graphs with certain degree restrictions is neighborhood-prime, we first consider how to create a prime labeling for unions of stars of certain sizes, which are examples of a neighborhood graph of this particular type of lobster.  Youssef and Elsakhawi~\cite{EY} demonstrated that the union of any two stars is prime, whereas we will consider any number of stars where we limit the degree of the vertices.

To label each star, we will use a number theoretical result by Pillai~\cite{pillai}.  
He proved that for any sequence of $m$ consecutive integers with $m\leq 16$, there exists an integer $x$ in the sequence such that $\gcd\{x,y\}=1$ for all other $y$ in the sequence.  This result allows us to label each star in a union of stars with consecutive integers with the number $x$ being used as the center of that star.  See Figure~\ref{union_of_stars} for an example of $S_{15}\cup S_8\cup S_5\cup S_4 \cup S_1$ with a prime labeling where $S_i$ is a star with $i$ pendant vertices. 

\begin{theorem}\label{unionOfStars}
The union of stars $S_{i_1}\cup \cdots \cup S_{i_n}$ is prime if at most one star $S_{i_j}$ has $i_j>15$.
\end{theorem}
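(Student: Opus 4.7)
The plan is to exhibit an explicit prime labeling of $S_{i_1}\cup\cdots\cup S_{i_n}$ by partitioning $[N]$ (where $N=n+\sum_{j}i_j$ is the total number of vertices) into consecutive blocks, one per star, and applying Pillai's theorem on each block of size at most $16$.

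First I would reorder the stars so that the distinguished star with more than $15$ pendants (if one exists) appears first; call it $S_{i_1}$. Assign the labels $\{1,2,\ldots,i_1+1\}$ to the vertices of $S_{i_1}$, placing the label $1$ on the center and the remaining $i_1$ labels arbitrarily on the pendants. Since $1$ is coprime to every positive integer, the prime labeling condition is satisfied on every edge of $S_{i_1}$, regardless of how large $i_1$ is. If no star has more than $15$ pendants, we may simply order the stars arbitrarily and treat the first star like the others in the next step.

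Next, process the remaining stars $S_{i_2},S_{i_3},\ldots,S_{i_n}$ in order; each of these has $i_j\le 15$, so its vertex set has cardinality $i_j+1\le 16$. For each $j\ge 2$ in turn, let $I_j$ be the set of the next $i_j+1$ unused consecutive integers (so after processing $S_{i_2},\ldots,S_{i_j}$ we have used the initial segment of $[N]$ of appropriate length). Because $|I_j|\le 16$, Pillai's theorem guarantees some $x_j\in I_j$ with $\gcd\{x_j,y\}=1$ for all $y\in I_j\setminus\{x_j\}$; place $x_j$ on the center of $S_{i_j}$ and distribute the elements of $I_j\setminus\{x_j\}$ arbitrarily on its pendants. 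Every edge of $S_{i_j}$ joins the center (labeled $x_j$) to a pendant (labeled by some other element of $I_j$), so its endpoints receive coprime labels.

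The resulting map is a bijection $V(S_{i_1}\cup\cdots\cup S_{i_n})\to[N]$, and every edge has endpoints with coprime labels, so it is a prime labeling. There is no real obstacle in this plan: the entire argument rests on the sharpness of Pillai's bound $m\le 16$, which is exactly what forces the hypothesis $i_j\le 15$ (equivalently $|V(S_{i_j})|\le 16$) on all but one of the stars, and the role of the exceptional star is absorbed costlessly by the universally coprime label $1$.
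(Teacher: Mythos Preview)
Your proof is correct and follows essentially the same approach as the paper: place the label $1$ on the center of the single (possibly) large star, then assign consecutive blocks of at most $16$ integers to each remaining star and use Pillai's theorem to pick a center label coprime to the rest of its block.
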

\begin{proof}
If there is a star $S_{i_j}$ with $i_j>15$, then we label the center of the star as $1$ and the remaining leaves as $2,\ldots, i_j+1$, which trivially makes the endpoints of each edge in this star relatively prime.

We proceed to label the remaining stars in any order but where each star's $i_k+1$ vertices are labeled by integers in an interval $[m,m+i_k]$ of the smallest available labels.  Given that $i_k\leq 15$, there exists an integer $x\in [m,m+i_k]$ in which $x$ is relatively prime with all primes $p\leq i_k$ based on this interval containing at most $16$ integers, according to the property proven by Pillai~\cite{pillai}.  Thus, labeling the center of $S_{i_k}$ with $x$ and the leaves with the remaining integers from the interval would fulfill the prime labeling condition and results in $S_{i_1}\cup \cdots \cup S_{i_n}$ being prime.
\end{proof}

\begin{figure}[htb]
\begin{center}
\includegraphics[scale=1]{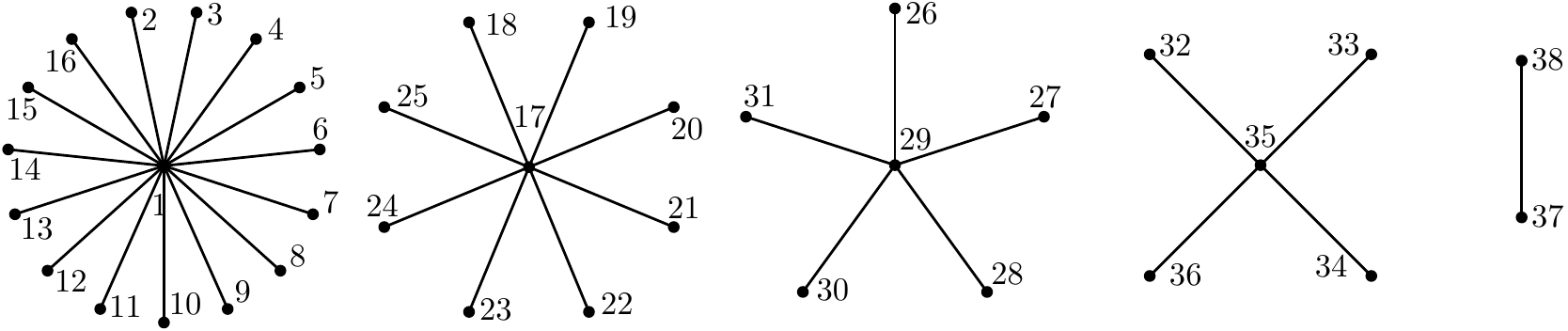}
\end{center}
\caption{Union of stars $S_{15}\cup S_8\cup S_5\cup S_4 \cup S_1$}\label{union_of_stars}
\end{figure}

\begin{figure}[htb]
\begin{center}
\includegraphics[scale=1]{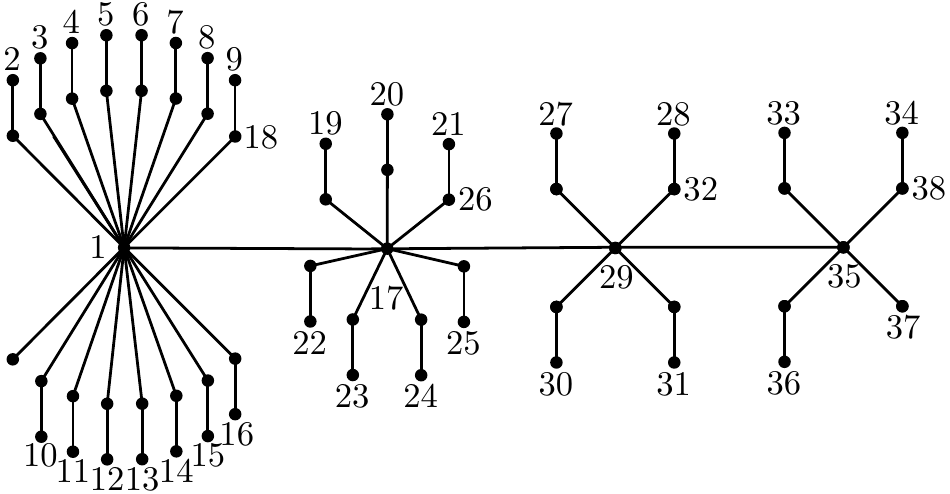}
\end{center}
\caption{Lobster with degree $17$, $9$, $6$, and $5$ on the interior vertices of the spine.}\label{lobster_ex}
\end{figure}

In order to determine if reduced lobsters are neighborhood-prime, we will try to find a prime labeling of its neighborhood graph, one of which is described in the following proposition.  See Figure~\ref{lobster_ex} for an example of a reduced lobster and part of an NPL that is created based on the prime labeling of its neighborhood graph displayed in Figure~\ref{union_of_stars}.  The remaining vertices can be labeled in any way by distinct integers in $\{39,\ldots, 64\}$ to complete the NPL. 

\begin{proposition}
All reduced lobsters in which the interior vertices $v$ on the central path satisfy $3\leq \deg(v)\leq 16$, except possibly for one vertex $v'$ with degree exceeding $16$, are neighborhood-prime.
\end{proposition}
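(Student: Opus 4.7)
The plan is to apply Theorem~\ref{neighborhoodGraph}: it suffices to exhibit a neighborhood graph $H \in \mathcal{N}(L)$ that admits a prime labeling, and the natural candidate is a disjoint union of stars handled by Theorem~\ref{unionOfStars}, which allows at most one star with more than $15$ leaves. Let the spine of $L$ be $v_1, \ldots, v_n$, and for each $v_i$ let its non-spine neighbors (each of degree $2$ in the reduced lobster) be $u_{i,1}, \ldots, u_{i,k_i}$ with respective pendants $w_{i,1}, \ldots, w_{i,k_i}$; here $k_i = d_i - 2$ for interior $v_i$ and $k_i = d_i - 1$ for the spine endpoints.

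First I would build $H$ by noting that each degree-$2$ vertex $u_{i,j}$ forces the edge $v_i w_{i,j}$, so every spine vertex $v_i$ is automatically the center of a star whose leaves are $w_{i,1}, \ldots, w_{i,k_i}$. For each spine vertex $v_i$ with $\deg(v_i) \geq 2$ I must additionally select one edge between two members of $N(v_i)$, and I would make these choices so as to keep $H$ a disjoint union of stars. The rule is: whenever $k_i \geq 2$, pick $u_{i,1} u_{i,2}$, contributing only an isolated edge (an $S_1$) that does not touch any other spine-centered star; whenever $k_i = 1$ (which, for an interior $v_i$, forces $d_i = 3$), pick an edge of the form $v_{i+1} u_{i,1}$, which merely appends $u_{i,1}$ as a single extra leaf to the star centered at $v_{i+1}$.

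Next I would bound the star sizes. With a globally consistent orientation rule for the degree-$3$ cases (e.g., "always point right," reversing direction only near the exceptional vertex $v'$ and the spine endpoints), each star at an interior $v_i$ receives at most one extra leaf, so has at most $k_i + 1 = d_i - 1 \leq 15$ leaves whenever $3 \leq d_i \leq 16$. The exceptional vertex $v'$ yields the single star with more than $15$ leaves that Theorem~\ref{unionOfStars} permits. Applying that theorem then produces a prime labeling of $H$, and Theorem~\ref{neighborhoodGraph} transfers it to a neighborhood-prime labeling of $L$; the leftover labels may be placed on the isolated $u_{i,j}$'s and the vertices in $S_1$-components arbitrarily.

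The main obstacle will be the bookkeeping for the extra leaves generated by the degree-$3$ interior spine vertices. A clean orientation rule must guarantee that no star other than the one centered at $v'$ absorbs two extra leaves, which is delicate when several consecutive interior spine vertices have degree $3$, or when such a vertex sits adjacent to $v'$. A secondary difficulty is the spine endpoints $v_1$ and $v_n$, whose base stars have $k_1 = d_1 - 1$ and $k_n = d_n - 1$ leaves and are not bounded by the hypothesis; the argument must either absorb a large endpoint star into the single large-star exception (taking $v'$ at an endpoint when necessary) or verify that the endpoint's own neighborhood-choice edge and the orientation rule of its spine neighbor conspire to add no further leaf to an already-large $v_1$- or $v_n$-centered star.
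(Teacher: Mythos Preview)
Your overall strategy matches the paper's: build an $H\in\mathcal{N}(L)$ that is a disjoint union of stars centered at the spine vertices (via the forced edges $v_iw_{i,j}$), invoke Theorem~\ref{unionOfStars}, and finish with Theorem~\ref{neighborhoodGraph}. The paper, however, makes two simplifications that dissolve both obstacles you flag. First, the paper has already stipulated (in the paragraph preceding the proposition) that the spine is taken so that $v_1$ and $v_n$ are leaves; hence $k_1=k_n=0$, there are no endpoint stars to worry about, and your ``secondary difficulty'' evaporates. Second, instead of your conditional rule, the paper uses the uniform choice $u_{i,1}v_{i+1}$ for \emph{every} interior $v_i$. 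With this rule each $v_i$-centered star ($3\le i\le n-1$) has exactly $\deg(v_i)-1\le 15$ leaves (the $w_{i,j}$'s together with the single incoming $u_{i-1,1}$), $v_2$'s star has $\deg(v_2)-2$ leaves, and $v_n$ picks up just the edge $u_{n-1,1}v_n$; no orientation bookkeeping is needed, and the only star that can exceed $15$ leaves is the one centered at $v'$. Your conditional rule also works, but the case split and the ``reversing direction'' device are unnecessary once you use the uniform rule and the leaf-endpoint convention.
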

\begin{proof}
We consider the neighborhood graph of a lobster $L$ that satisfies the assumed degree requirements.  We call the spine vertices $v_1,\ldots, v_n$ for some $n\in\mathbb{N}$.  For each interior spine vertex $v_i$ for $i=2,\ldots, n-1$, we will consider $u_{i,1},\ldots, u_{i,i_k}$ to be its neighbors not on the central path where $1\leq i_k\leq 14$ for each $v_i\neq v'$ and $i_k\geq 1$ for $v_i=v'$.  We then refer to the leaf that is adjacent to $u_{i,j}$ as $w_{i,j}$.  One particular graph $H\in\mathcal{N}(L)$ exists in which we choose the following edges to include for each non-leaf's neighborhood.

For each $u_{i,j}$, the edge in $H$ using vertices in $N(u_{i,j})$ is forced to be $w_{i,j}v_i$.  For every $v_i$ with $i=2,\ldots, n-1$, since its degree is at least $3$, we choose from its neighborhood an edge $u_{i,1}v_{i+1}$.  The resulting graph $H$ is a disjoint union of stars (with additional isolated vertices) in which the vertex $v_2$ is a star $S_{m_2}$ where $m_2=\deg(v_2)-2$ since the only edges in this star are $w_{2,j}v_2$.  There will also be a star $S_{m_i}$ with center $v_i$ for $i=3,\ldots, n-1$ where in this case $m=\deg(v_i)-1$ since the edge $u_{i-1,1}v_i$ is also included.  Finally, there is a star $S_2$ formed only by the edge $u_{n-1,1}v_n$.  

We see that the graph $H$ contains a union of stars in which each star has at most $\deg(v_i)\leq 16$, other than potentially one vertex $v'$ with higher degree.  By Theorem~\ref{unionOfStars}, this union of stars has a prime labeling, and any isolated vertices in $H$ can be labeled using the remaining unused labels from $[|V(L)|]$.  Thus Theorem~\ref{neighborhoodGraph} proves that this type of lobster graph is neighborhood-prime.
\end{proof}

Note that while the structure of a reduced lobster is helpful in specifying the degree requirements and describing the neighborhood graph, Observation~\ref{pendantAdd} extends this previous result to lobsters that are not reduced by adding any number of pendants to the interior spine vertices or to their neighbors $u_{i,j}$.

Cloys and Fox \cite{Fox} conjectured that all trees are neighborhood-prime. Though our partial results on lobsters are not enough to prove this conjecture, we can show that if the order of the tree is large enough, all such trees are neighborhood-prime. This can be done with the following result from Haxell, Pikhurko, and Taraz~\cite{HPT}. Note that it is stated in \cite{HPT} that estimates have that $n'>10^{10^{100}}$ would be an order that is large enough for Theorem~\ref{haxell} to hold.

\begin{theorem}{\normalfont \cite{HPT}}\label{haxell}
There exists an $n'$ such that every tree with $n\geq n'$ vertices is prime. 
\end{theorem}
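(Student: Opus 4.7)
The plan is to recast prime labeling as a graph-embedding problem. Define the \emph{coprime graph} $\mathcal{C}_n$ on vertex set $[n]$ with $\{i,j\}$ an edge iff $\gcd(i,j)=1$. A prime labeling of an $n$-vertex tree $T$ is then exactly a bijection $f:V(T)\to[n]$ whose image of $E(T)$ lies in $E(\mathcal{C}_n)$, i.e., a spanning embedding $T\hookrightarrow \mathcal{C}_n$. So the goal becomes: for all $n$ sufficiently large, every $n$-vertex tree embeds as a spanning subgraph of $\mathcal{C}_n$.

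The first step would be to quantify how well-behaved $\mathcal{C}_n$ is. The label $1$ is universal; by the prime number theorem the $\sim n/(2\ln n)$ primes in $(n/2,n]$ each have degree $n-1$ in $\mathcal{C}_n$. For general $i\in[n]$ the degree equals $n\phi(i)/i + O(2^{\omega(i)})$, so only labels divisible by many distinct small primes fail to have degree $(1-o(1))n$, and standard sieve bounds show this ``bad'' set $B$ has size $o(n)$. Thus $\mathcal{C}_n$ is almost complete apart from a controlled exceptional set.

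Second, I would reserve the best labels for the worst vertices of $T$. Choose $\Delta=\Theta(\ln n)$; since $\sum_v \deg(v) = 2(n-1)$, at most $K = O(n/\ln n)$ vertices of $T$ have degree exceeding $\Delta$, and this fits comfortably inside the supply of primes in $(n/2,n]$. Assign the label $1$ to a vertex of maximum degree and distinct primes from $(n/2,n]$ to the other high-degree vertices; every tree edge incident to this set is automatically satisfied, since these labels are pairwise coprime and coprime to every label in $[n]\setminus\{1\}$. After removing these vertices, the residual forest $T'$ has maximum degree $O(\ln n)$, and I would embed $T'$ into $\mathcal{C}_n$ restricted to the remaining labels using the Koml\'os--S\'ark\"ozy--Szemer\'edi spanning-tree theorem, whose hypotheses (minimum degree $(1/2+\varepsilon)n$ and bounded max degree of the tree) are comfortably met because the restricted coprime graph has minimum degree $(1-o(1))n$ once $B$ has been set aside.

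The main obstacle is handling the $o(n)$ deficient labels of $B$ without sabotaging the embedding. I would handle them by an absorption step: before the bulk embedding, commit each $b\in B$ to a low-degree vertex of $T'$ (a leaf of $T$, or an interior degree-$2$ vertex) via a Hall-type matching in the bipartite compatibility graph in which $b$ is joined to $v$ whenever the labels still available for $N_{T'}(v)$ contain enough representatives coprime to $b$. This graph expands because $|B|\ll n$ and every tree has many low-degree vertices, but proving sharp enough expansion, with careful joint control over the behavior of the small primes dividing labels in $B$, is the delicate heart of the argument. Pushing these estimates through is what would force $n'$ to be astronomically large, consistent with the cited estimate $n'>10^{10^{100}}$.
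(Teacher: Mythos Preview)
The paper does not prove this statement at all: Theorem~\ref{haxell} is quoted verbatim from Haxell, Pikhurko, and Taraz~\cite{HPT} and is used as a black box to deduce the subsequent theorem about neighborhood-primality of large trees. There is therefore no ``paper's own proof'' to compare your proposal against.

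That said, your outline is broadly in the spirit of the actual argument in~\cite{HPT}: recasting prime labeling as embedding into the coprime graph $\mathcal{C}_n$, reserving $1$ and the large primes for the high-degree vertices, and then embedding the low-degree remainder. Two points deserve caution. First, the Koml\'os--S\'ark\"ozy--Szemer\'edi theorem you invoke in its original form requires the tree to have \emph{bounded} (constant) maximum degree; to accommodate $\Delta=\Theta(\ln n)$ you need their later strengthening allowing $\Delta = O(n/\ln n)$, and even then the host graph here is not quite the clean almost-complete graph that statement assumes, so one cannot simply quote it off the shelf. Second, your absorption step for the deficient labels $B$ is where the real work lies; the Hall-type matching you describe is the right shape, but verifying expansion simultaneously against the arithmetic constraints on $B$ and the structural constraints on low-degree vertices of $T'$ is genuinely delicate, as you acknowledge. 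As a plan this is reasonable, but it is a sketch of~\cite{HPT} rather than something the present paper attempts.
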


Note that if $T'\in \mathcal{N}(T)$ where $T$ is a tree, then $|V(T')|>n/2$ where $|V(T)|=n$. Since for any tree $T$, the neighborhood-prime graph of $T$ is a forest, the following result is a consequence of Theorem~\ref{neighborhoodGraph} and since a prime graph is still prime after having an edge deleted. 

\begin{theorem}
There exists an $n'$ such that every tree with $n\geq n'$ vertices is neighborhood-prime. 
\end{theorem}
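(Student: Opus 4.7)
The plan is to invoke Theorem \ref{neighborhoodGraph}: it suffices, for each tree $T$ on $n$ vertices with $n$ sufficiently large, to exhibit some $H \in \mathcal{N}(T)$ that admits a prime labeling. I will take as the threshold the very same $n'$ provided by Theorem \ref{haxell}, so no auxiliary estimate is needed.

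Fix a tree $T$ with $|V(T)| = n$ and choose an arbitrary $H \in \mathcal{N}(T)$. The key structural step is to verify the claim recorded just before the theorem statement: $H$ is a forest. I would argue by contradiction. Suppose $H$ contains a cycle $x_1 x_2 \cdots x_k x_1$, and let $v_i \in V(T)$ denote the source vertex from which the edge $x_i x_{i+1}$ of $H$ was chosen, so that $x_i, x_{i+1} \in N_T(v_i)$. By the definition of $\mathcal{N}(T)$, each vertex of degree at least $2$ in $T$ contributes at most one edge to $H$, so $v_1,\ldots,v_k$ are pairwise distinct. Concatenating the length-two paths $x_i v_i x_{i+1}$ produces a closed walk of length $2k$ in $T$. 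In a tree every closed walk must traverse each edge an even number of times, and carefully analyzing when two of the $2k$ traversed edges $\{v_i,x_i\},\{v_i,x_{i+1}\}$ can coincide (using distinctness of the $v_i$ together with $v_i\neq x_i,x_{i+1}$) forces a genuine simple cycle in $T$, contradicting acyclicity.

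With $H$ a forest on the common vertex set $V(T)$, I would add a minimal set of edges to connect its components, absorbing any isolated vertices (such as leaves of $T$ whose unique neighbor did not select them), to produce a spanning tree $T^\ast$ of the complete graph on $V(T)$ containing $H$ as a subgraph. Because $|V(T^\ast)| = n \geq n'$, Theorem \ref{haxell} yields a prime labeling $f : V(T^\ast) \to [n]$. Since $E(H) \subseteq E(T^\ast)$ on the same vertex set, and edge-deletion preserves the prime condition (the $\gcd$ on any surviving edge is unaffected), the same $f$ restricts to a prime labeling of $H$. Invoking Theorem \ref{neighborhoodGraph} then shows $T$ is neighborhood-prime, completing the proof.

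The principal obstacle is the forest claim for $H$, since this is the bridge between the cycle-free structure of $T$ and the reduction to ordinary prime labelings; the remainder is essentially a cost-free spanning-tree completion followed by a direct invocation of the deep Haxell--Pikhurko--Taraz theorem.
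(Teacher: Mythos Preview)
Your proposal is correct and follows essentially the same route as the paper: show that any $H\in\mathcal{N}(T)$ is a forest, complete it to a spanning tree on the same $n$ vertices, apply Theorem~\ref{haxell}, and then pass back to $H$ by edge-deletion before invoking Theorem~\ref{neighborhoodGraph}. You in fact supply more detail than the paper on the forest claim (which the paper simply asserts); your closed-walk sketch becomes airtight once you note that the walk $x_1 v_1 x_2 v_2\cdots x_k v_k x_1$ has no immediate backtracks, and a nontrivial backtrack-free closed walk cannot exist in a tree.
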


\bibliographystyle{amsplain}
\bibliography{vdec}

\newpage

\section*{Appendix}

\begin{figure}[htb]
\begin{lstlisting}
import numpy as np
import matplotlib.pyplot as plt
import networkx as nx
from math import gcd
import itertools
from functools import reduce

def gen_labelings(n):
    lst = list(itertools.permutations([x+1 for x in range(n)]))
    return lst

def is_np_labeling(G,labeling):
    found_label = True
    for v in nx.nodes(G):
        new_lst = [lst[x] for x in G.neighbors(v)]
        if len(new_lst) > 1: 
            if reduce(gcd, new_lst) != 1:
                found_label = False
    return found_label

if __name__ == "__main__":
    for N in range(2,10):
        g1 = nx.read_graph6("graph{}.g6".format(N))
        lsts = gen_labelings(len(g1[0].nodes()))
        for g in g1:
            one_good_lst = False
            for lst in lsts:
                if is_np_labeling(g,lst):
                    one_good_lst = True
                    break
            if not one_good_lst:
                print('Graph with {} vertices and {} edges.'
                      .format(len(g.nodes()),len(g.edges())))
                fig, ax = plt.subplots(1,1)
                nx.draw_networkx(g, with_labels=True, node_size = 50, node_color='orange',font_size=10,ax=ax)
                plt.axis('off')
                plt.show()
\end{lstlisting}
\caption{Code used to determine which small order graphs are neighborhood-prime}\label{code}
\end{figure}

\begin{figure}[htb]
\begin{center}
	\includegraphics[scale=.8]{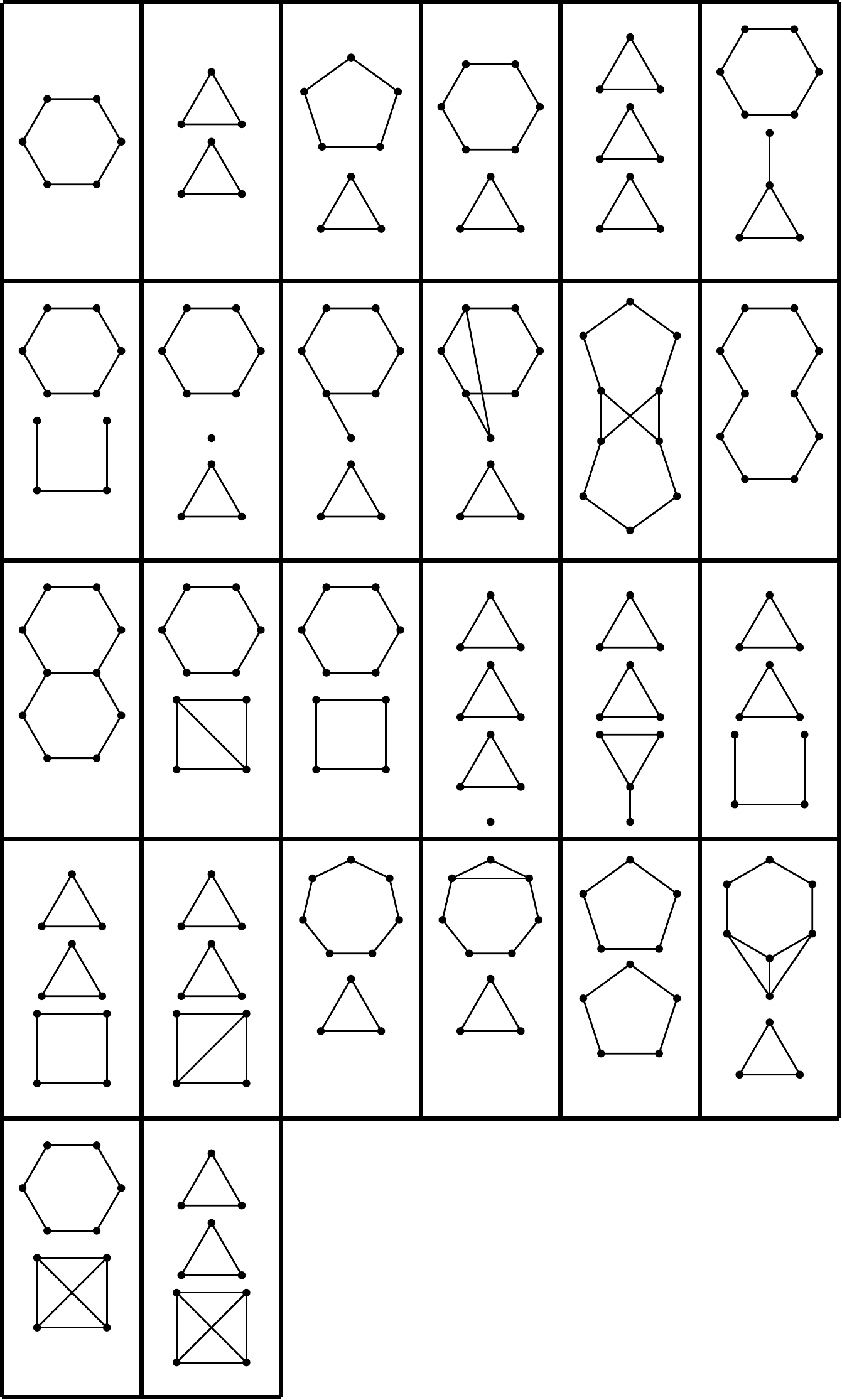}
\end{center}
\caption{All graphs on at most $10$ vertices with no neighborhood-prime labeling}\label{non_np_graphs}
\end{figure}

\end{document}